 \newtheorem{theorem}{Theorem}[section]
 \newtheorem{corollary}[theorem]{Corollary}
 \newtheorem{lemma}[theorem]{Lemma}
 \newtheorem{proposition}[theorem]{Proposition}
  \newtheorem{question}{\sc Question}
 \theoremstyle{definition}
 \newtheorem{definition}[theorem]{Definition}
 \theoremstyle{remark}
 \newtheorem{remark}[theorem]{Remark}
 \theoremstyle{definition}
 \newtheorem{example}[theorem]{Example}
\newcommand{\rk}{\mathrm{R}} 
\newcommand{\cactusR}{\mathrm{cR}}
\newcommand{\XX}{\mathbb{X}}
\newcommand{\gradR}[1]{\mathrm{R}_{_{\nabla^{#1}}}}
\newcommand{\gradc}[1]{\mathrm{cR}_{_{\nabla^{#1}}}}
 \newcommand{\ud}{{\underline{d}}}
 \newcommand{\ue}{{\underline{e}}}
 \renewcommand{\HF}{\mathrm{HF}} 
 \newcommand{\cat}{\mathrm{cat}}
 \newcommand{\cR}{\mathrm{cR}}
 \newcommand{\rank}{\mathrm{rank}}
 \renewcommand{\flat}{\mathrm{flat}}
 \newcommand{\flatKos}{\mathrm{flatKos}}
 \newcommand{\textbigotimes}{{\textstyle \bigotimes}}
 \newcommand{\udelta}{\underline{\delta}}
 \newcommand{\frakm}{\mathfrak{m}}
 \DeclareMathOperator{\reg}{reg}
 \DeclareMathOperator{\Spec}{Spec}
 \newcommand{\id}{\mathrm{id}}
 \newcommand{\bbN}{\mathbb{N}}
 \newcommand{\bbP}{\mathbb{P}}
 \newcommand{\bbX}{\mathbb{X}}
\newcommand{\bbY}{\mathbb{Y}}
 \newcommand{\rmR}{\mathrm{R}}
 \renewcommand{\tilde}[1]{\widetilde{#1}}
\renewcommand{\hat}[1]{\widehat{#1}}
 \newcommand{\vvirg}{ , \dots , }
\newcommand{\ootimes}{ \otimes \cdots \otimes }
\newcommand{\ttimes}{ \times \cdots \times }
\newcommand{\ooplus}{ \oplus \cdots \oplus }
\newcommand{\bfx}{\mathbf{x}}
\newcommand{\bfy}{\mathbf{y}}
\newcommand{\calF}{\mathcal{F}}
\newcommand{\partinto}[2][]{
      \ifthenelse{\equal{#1}{}}{
      {{\scalebox{1.5}[.8]{$\; \vdash$} {#2} }}}{
      {{\underset{\hfill #1}{\scalebox{1.5}[.8]{$\; \vdash$}} {\; #2} }}
      }}
\newcommand{\frakS}{\mathfrak{S}}
\DeclareMathOperator{\Ann}{Ann}
\newcommand{\textsum}{{\textstyle \sum}}
\newcommand{\textbinom}[2]{{\textstyle \binom{#1}{#2}}}
\newcommand{\textfrac}[2]{{\textstyle \frac{#1}{#2}}}
\title{Partially symmetric variants of Comon's problem via simultaneous rank}
\author[F. Gesmundo]{Fulvio Gesmundo}
\address[F. Gesmundo]{QMATH, University of Copenhagen, Universitetsparken 5, 2100 Copenhagen O., Denmark}
\email{fulges@math.ku.dk}
\author[A. Oneto]{Alessandro Oneto}
\address[A. Oneto]{FMA-IAG, Otto-von-Guericke Universit\"at Magdeburg, Universit\"atsplatz 2, 39106 Magdeburg, Germany}
\email{alessandro.oneto@ovgu.de, aless.oneto@gmail.com}
\author[E. Ventura]{Emanuele Ventura}
\address[E. Ventura]{Department of Mathematics, Texas A\&M University, College Station, TX 77843-3368, USA}
\email{eventura@math.tamu.edu, emanueleventura.sw@gmail.com}
\newcommand{\mysurjects}{\begin{tikzpicture} 
\node at (0,0) (a) {};
\node at (1.5,0) (b) {};
\path [draw,->>] (a) -- (b);
\end{tikzpicture}}
\newcommand\bsfrac[2]{
\scalebox{-1}[1]{\nicefrac{\scalebox{-1}[1]{$#1$}}{\scalebox{-1}[1]{$#2$}}}
}
\keywords{Tensors, Partially Symmetric Tensors, Symmetric Tensors, Tensor Rank, Partially Symmetric Rank, Waring Rank, Apolarity, Simultaneous Rank, Partial Derivatives}
\subjclass[2010]{(Primary) 15A69, 13P05 (Secondary) 13A02, 14N05}
\begin{document}
\maketitle

\begin{abstract}
A symmetric tensor may be regarded as a partially symmetric tensor in several different ways. These produce different notions of rank for the symmetric tensor which are related by chains of inequalities. By exploiting algebraic tools such as apolarity theory, we show how the study of the simultaneous symmetric rank of partial derivatives of the homogeneous polynomial associated to the symmetric tensor can be used to prove equalities among different partially symmetric ranks. This approach aims to understand to what extent the symmetries of a tensor affect its rank. We apply this to the special cases of binary forms, ternary and quaternary cubics, monomials, and elementary symmetric polynomials. 
\end{abstract}

\section{Introduction}\label{section: intro}
The problem of representing tensors in convenient ways is connected to several areas of pure and applied mathematics. A line of research concerns {\it additive decompositions}: given a tensor of order $d$, say $t \in V^{\otimes d}$, a {\it tensor decomposition} of $t$ is a sum of {\it rank-one} tensors, i.e., elements of the form $v_1\ootimes v_d$, adding up to $t$. The smallest length of such a decomposition of $t$ is the {\it tensor rank} of $t$. Whenever a tensor satisfies certain {\it symmetries}, it is natural to study tensor decompositions reflecting such symmetries. Thus, several possible notions of rank arise, which are usually referred to as ({\it partially}) {\it symmetric} tensor ranks. The study of partially symmetric tensors has recently gained interest; see, for instance, \cite{BerBraComMou:GenTensorDecompMomMatApplications, BalBerChrGes:PartiallySymRkW}.

The space of homogeneous polynomials, or \textit{forms}, of degree $d$ on a vector space $V^*$ can be naturally identified with the space of symmetric tensors in $V^{\otimes d}$; denote this space by $S^d V$. Symmetric tensor decompositions are classically known as {\it Waring decompositions}; these are sums of powers of linear forms. The corresponding rank is the {\it Waring rank}. This class of decompositions has been studied since the XIX century, when Sylvester completed the classification of binary forms in terms of their Waring rank \cite{Sylvester1852}, that is the case where $\dim V = 2$. A great amount of work is devoted to this topic; among others, we mention Clebsch \cite{Cle61:UeberCurven}, Lasker \cite{Las04:ZurTheorie}, Palatini \cite{Pal03:CubicaQuinaria, Pal03:FormeTernarie}, Terracini \cite{Ter:CoppieFormeTernarie, Ter15:RappresFormeQuaternarie}. During the last decades of the last century, Sylvester's ideas were re-read in the modern algebraic language of Apolarity Theory, see e.g. {\cite{Re92:SumRealForms,DK93:PolarCovariants,ER93:Apolarity,IarrKan:PowerSumsBook}}. A major breakthrough in the development of the subject was accomplished in 1995 by Alexander and Hirschowitz \cite{AlHi95}, who resolved the long standing problem of determining Waring ranks of generic forms in any number of variables and any degree {(see also \cite{Ia95:InverseSystem})}. Throughout the years, the Waring problem attracted the attention of a broader community and classical and modern tools from algebraic geometry as well as from other fields have been employed for a variety of questions in this subject; see, for instance, \cite{Kl99:ReprPoly,BrCoMoTs10:SymmTensor, ComSei11:RankBinary,CarCatGer:SolutionMonomials,
BuBuTe13:WaringMonomials}.

{Disregarding its} symmetries, a symmetric tensor can be regarded as an element of the space of partially symmetric tensors for different choices of partial symmetries and one can ask what are the relations among the different (partially symmetric) ranks which arise in this way. This was the object of a famous question raised by Comon, who asked whether the tensor rank of a symmetric tensor equals its symmetric rank; see \cite[Problem~15]{Oed:Report}. This problem received a great deal of attention in the last few years. Affirmative answers were derived under certain assumptions \cite{ComGolLimMou:SymmetricTensorsSymmetricTensorRank, BalBer:TensorRankTangentDevelopSegre, ZhangHuangQi:ComonsConjecture, Fri:RmksSymmetricRank, Seig:RanksSymRanksCubSurf}. Recently, Shitov gave an example for $d = 3$ and $\dim V = 800$, where Comon's question has negative answer \cite{Shitov:CounterexampleComon}. 

In this article, we approach a partially symmetric version of Comon's question investigating relations among the partially symmetric ranks of a symmetric tensor. Our results will be obtained via the study of \textit{simultaneous Waring decompositions} of the set of $k$-th partial derivatives of homogeneous polynomials. 

\subsection{Formulation of the problem}\label{technical}
Let $V$ be a vector space of dimension $n+1$ over an algebraically closed field $\Bbbk$ of characteristic zero.

Let $d \geq 0$ be an integer and $\ud = (d_1 \vvirg d_m) \in \bbN^{m}$ be a sequence of integers with $\sum_{i=1}^m d_i = d$. In this case, $\ud$ is called a {\it composition} of $d$ and denoted $\ud \partinto{d}$; write $\ud \partinto[m]{d}$ when the length of the composition is relevant.

Let $\{x_0,\ldots,x_n\}$ be a basis of $V$. 
\begin{itemize}
	\item For any multi-index $\alpha \in \bbN^{n+1}$, write 
\[
|\alpha| := \alpha_0+\ldots+\alpha_n \qquad \text{and} \qquad \alpha! := \alpha_0!\cdots\alpha_n!.
\]
\item The symmetric algebra $S^\bullet V=\bigoplus_{j\geq 0}S^jV$ of $V$ is $\Bbbk[x_0,\ldots,x_n]$, that is the ring of polynomials on $V^*$ with its standard grading.
	\item For each $d \in \bbN$, $S^d V$ is identified with the space of symmetric tensors in $V^{\otimes d}$; a basis of $S^d V$ is given by
	\[
	\{\bfx^\alpha:=x_0^{\alpha_0}\cdots x_n^{\alpha_n} ~|~ \alpha\in\bbN^{n+1},~|\alpha| = d \}.	 
	\]
	\item Let $\Bbbk[x_{i,j} : i = 1,\ldots,m, j= 0,\ldots,n]$ denote the ring of polynomials in $m(n+1)$ variables with the multigrading defined by $\deg(x_{i,j}) = \underline{e}_i := (0,\ldots,\underset{i}{1},\ldots,0)$. 
	\item For $\ud \partinto d$, let $S^\ud V := S^{d_1}V \ootimes S^{d_m}V \subseteq V^{\otimes d}$ denote the space of partially symmetric tensors: this is isomorphic to the space of polynomials of multi-degree~$\ud$.
	\item Denote by $\frakS_d$ the symmetric group of permutations on a set of $d$ elements; $\frakS_d$ acts on $V^{\otimes d}$ by permuting the tensor factors.
\end{itemize}

In summary, we have the following diagram:

\medskip

\resizebox{\textwidth}{!}{
\renewcommand{\arraystretch}{0.8}
$\begin{array}{ccccc} 
\boxed{ 
\begin{array}{c} \text{Tensors} \\ \\ \\ V^{\otimes d} \end{array}
} & \supseteq & 
\boxed{ 
\begin{array}{c} \text{Partially Symmetric Tensors} \\ \begin{smallmatrix} \text{invariant for} \ \frakS_{d_1} \ttimes \frakS_{d_m} \end{smallmatrix} \\ \\ S^\ud V \end{array}
} & \supseteq &
\boxed{ 
\begin{array}{c} \text{Symmetric Tensors} \\ \begin{smallmatrix} \text{invariant for} \ \frakS_{d} \end{smallmatrix} \\ \\ S^d V  \end{array}
} \\
\rule{0pt}{0.4cm}
\rotatebox[origin=c]{90}{$\simeq$} & & \rotatebox[origin=c]{90}{$\simeq$} & & \rotatebox[origin=c]{90}{$\simeq$} \\
\rule{0pt}{0.5cm} 
\boxed{ 
\begin{array}{c} \\ \text{Multilinear} \\ \text{Forms} \\ \\ \\ \end{array}
} & \supseteq & 
\boxed{ 
\begin{array}{c} \text{Multihomogeneous} \\ \text{Polynomials} \\ \\ \Bbbk \left[x_{ij}: \begin{smallmatrix} i = 1 \vvirg m \\ j = 0 \vvirg n\end{smallmatrix} \right]_\ud \end{array}
} & {\mysurjects} &
\boxed{ 
\begin{array}{c} \text{Homogeneous} \\ \text{Polynomials} \\ \\ \\ \Bbbk[x_0,\ldots,x_n]_d  \end{array}
} \\
\end{array}$
}

\medskip

The bottom right map is the surjective homomorphism defined by $x_{ij} \mapsto x_j$.

We give the following definition to formally introduce tensor decompositions respecting certain symmetries of a tensor.

\begin{definition}\label{defin: partially symmetric decomp}
Let $t \in S^\ud V$.  A {\bf partially symmetric tensor decomposition} of $t$ is a sum of rank-one partially symmetric tensors such that
\begin{equation}\label{eq: partsym decomp}
t = \sum_{i=1}^r v_{i,1}^{d_1}\ootimes v_{i,m}^{d_m}, \quad \text{ where } v_{i,j} \in V.
\end{equation}
The smallest $r$ such that a decomposition \eqref{eq: partsym decomp} exists is the {\bf partially symmetric rank} of $t$. Equivalently, given a multi-homogeneous polynomial $f$ of multi-degree $\ud$, a {\bf multi-homogeneous decomposition}, or {\bf $\ud$-decomposition}, of $f$ is a sum 
\begin{equation}\label{eq: multideg decomposition}
f = \sum_{i=1}^r \ell_{i,1}^{d_1}\cdots\ell_{i,m}^{d_m},\quad \text{ where } \deg(\ell_i) = {\underline{e}_i}.
\end{equation}
The smallest $r$ such that a decomposition \eqref{eq: multideg decomposition} exists is the {\bf $\underline{d}$-rank} of $f$, denoted $\rmR_\ud (f)$. 
\end{definition}

We will not distinguish between a multi-homogeneous polynomial and the corresponding partially symmetric tensor. In particular, we always write $f \in S^\ud V$ for a multihomogeneous polynomial $f$ of multi-degree $\ud$.

The space of symmetric tensors $S^dV$ is a subspace of $S^\ud(V)$ for any $\ud \partinto{d}$. Therefore, for any $\ud$ and for any $f \in S^dV$, we may ask the following.

\begin{question}\label{question}
Let $f \in S^dV$ and let $\ud \partinto d$. Is it true that $\rk_d(f) = \rk_\ud(f)$?
\end{question}

The original Comon's question corresponds to the case $\ud = (1,\ldots,1)$. Note that in the cases where the original question has an affirmative answer, so does the partially symmetric version for any $\ud$; see Lemma \ref{lemma: d geq d'}. Equivalently, an example where Question \ref{question} has a negative answer for some $\ud$ provides an example where the original Comon's question has negative answer as well. In this paper, we show instances where Question \ref{question} has affirmative answer for some choice of $\ud$, whereas the answer in the classical setting is not known. These are the cases of monomials and elementary symmetric polynomials.

Our approach is based on the study of \textit{simultaneous Waring decompositions} of a collection of homogeneous polynomials. The problem of determining simultaneous ranks dates back to Terracini, see \cite{Ter:CoppieFormeTernarie}; some related problems were addressed more recently in \cite{Fon:WaringProblemManyForms,AngGalMelOtt:WaringDecompPolyVector,CarVen:SimultaneousWaringRank}. The {\it simultaneous Waring rank} of a collection of homogeneous polynomials is the minimum number of linear forms needed to simultaneously write a Waring decomposition for every polynomial in the collection. In this work, given a polynomial $f$, we consider the simultaneous rank of the collection of its partial derivatives of a given order.

\begin{definition}\label{defin: gradient rank}
Let $f \in S^dV$ and let $k < d$. Let $\nabla^kf$ be the set of partial derivatives of order $k$ of $f$, i.e., $\nabla^kf = \left\{\frac{\partial^kf}{\partial \bfx^\alpha} : |\alpha| = k\right\}$. The {\bf $k$-th~gradient rank} of $f$ is the simultaneous rank of $\nabla^kf$, i.e.,
\[
	\gradR{k}(f) = \min\left\{r : \exists \ \ell_1,\ldots,\ell_r \in V \text{ such that } \frac{\partial^k f}{\partial \bfx^\alpha} = \sum_{i=1}^r c_{\alpha,i}\ell_i^{d-k}, \text{ for some } c_{\alpha,i} \in \Bbbk\right\}. 
\]
If $k = 1$, write $\gradR{}(f)$ for $\gradR{1}(f)$.
\end{definition}
Given $f \in S^dV$, for any $\ud \partinto[m]{d}$ with $d_m= d - k$, we have the following chain of inequalities, which is proven in Section \ref{subsec: ranks alg vars} (see Lemma \ref{lemma: d geq d'} and Corollary  \ref{corol: gradient rank as segre-veronese}): 
\begin{equation}\label{eq: intro}
	\rk_d(f) \geq \rk_\ud(f) \geq \gradR{k}(f). 
\end{equation}

In view of these inequalities, we will focus on the $k$-th gradient with the following strategy: we show that for certain families of homogeneous polynomials, the $k$-th gradient rank coincides with the Waring rank, so that \eqref{eq: intro} is a chain of equalities.

We prove our results employing classical {\it apolarity theory} which dates back to Sylvester, see Section \ref{subsec: apolarity}. Briefly, apolarity relates the rank of a symmetric tensor $f$ (respectively the simultaneous rank of a family of polynomials $f_1,\ldots,f_s$) to the minimal cardinality of a set of points whose ideal is contained in the {\it apolar ideal} of $f$ (respectively the intersection of the apolar ideals of the $f_i$'s); see Lemma \ref{lemma: apolarity homog} and Lemma \ref{remark: simul apolar}.

The notions of rank that we introduced have a {\it cactus} analog which will be denoted by $\cactusR$ with the corresponding subscripts; precise definitions will be given in Section \ref{subsec: cactus}. In terms of apolarity, this corresponds to studying \textit{any} $0$-dimensional scheme of minimal degree rather than just reduced sets of points.  This terminology was introduced in \cite{BuczBucz:SecantVarsHighDegVeroneseReembeddingsCataMatAndGorSchemes,RanSch11:RankForm}, but it coincides with the notion of \textit{scheme length} defined in \cite[Definition~4D]{Ia95:InverseSystem}). For cactus ranks there is a chain of inequalities analogous to \eqref{eq: intro} and we will use the same strategy explained above to study Question \ref{question} in this setting as~well.

Similarly, one can consider the notion of border rank, which is the \emph{upper semicontinuous closure} of the notion of rank: more precisely, the tensor border rank of $t \in V^{\otimes d}$ is the the minimum $r$ such that $t$ can be expressed as the limit of a sequence of $r$ tensors; partially symmetric border rank is defined similarly. In \cite{BGL:DeterminantalEquationsEKSConj, BL:RanksTensorsAndGeneralization}, some instances of Question \ref{question} for border rank are considered.

\subsection{State of the art: old and new}
We list some known results of the various versions of Comon's question described in the previous section and we present our main contributions. 

Let $f \in S^dV$, let $\ud \partinto[m]{d}$ and let $k = d-d_m$.
	\begin{itemize}
\item \underline{Binary forms.} If $\dim V = 2$, i.e., $f$ is a binary form, then Question \ref{question} has affirmative answer. The case $\ud = (1,\ldots,1)$ was proved in \cite[Corollary~3.12]{ZhangHuangQi:ComonsConjecture} and this implies an affirmative answer for any $\ud$ (see Lemma \ref{lemma: d geq d'}). Question \ref{question} has affirmative answer also if considered for \textit{cactus ranks}: indeed, in the case of binary forms the cactus rank coincides with the border rank and, in this case, Question \ref{question} admits a positive answer for border ranks (see \cite[Example~4.2.5]{BGL:DeterminantalEquationsEKSConj}). In Proposition \ref{prop: binary forms}, we prove
  \begin{align*}
    \gradR{k}(f) & = \min \{ \rmR_d(f) , d-k+1 \}, \\
    \quad \gradc{k}(f) & = \min \{ \cactusR_d(f) , d-k+1 \}. 
  \end{align*}
   Consequently, if $\rk_d(f) \leq d-k+1$ (respectively $\cactusR_d(f) \leq d-k+1$), then we have $\rk_d(f) = \rk_\ud(f) = \gradR{k}(f)$ (respectively $\cactusR_d(f) = \cactusR_\ud(f) = \gradc{k}(f)$).
  
\item \underline{Ternary and quaternary cubics}. Let $\dim V = 3$ or $4$ and $d = 3$, i.e., $f$ defines a plane cubic curve or a cubic surface. Then, \cite[Theorem~7.1(4)]{Fri:RmksSymmetricRank} (for ternary cubics) and \cite[Theorem~1.3]{Seig:RanksSymRanksCubSurf}  (for quaternary cubics) prove that that Question \ref{question} has affirmative answer for $\ud = (1,\ldots,1)$ and by Lemma \ref{lemma: d geq d'} this implies an affirmative answer for any $\ud$. \cite[Lemma 3.1]{Seig:RanksSymRanksCubSurf} (for ternary cubics) and \cite[Theorem 1.5]{Seig:RanksSymRanksCubSurf} (for quaternary cubics) prove that Question \ref{question} has an affirmative answer when interpreted for border ranks: in the range of interest, border rank coincides with cactus rank (see \cite[Sections 3.5 and 3.6]{BuBu15:OnDifference}), therefore Question \ref{question} has affirmative answer for cactus rank as well. In Corollary  \ref{corol: gradient rank as segre-veronese}, we additionally prove
\[
	\rk_{1,2}(f) = \gradR{}(f).
\]
In Proposition \ref{prop: gradient plane cubics} (for ternary cubics) and in Proposition \ref{prop: gradient cubic surfaces} (for quaternary cubics), we use a different method to prove the results described above and we prove 
\[
 	\cactusR_{1,2}(f) = \gradc{}(f). 
\]
Our proof provides additional information on the relation between minimal Waring and multi-homogeneous decompositions; see Remark \ref{rmk: simul decomp binary} Example \ref{ex: maximal rank cubic}.

\item \underline{Monomials}. Let $f = x_0^{\alpha_0}\cdots x_n^{\alpha_n}$ {be a monomial}.  We have the following:
		\begin{enumerate}[\rm (a)]
			\item {\rm (Theorem \ref{thm: gradient monomials})} if $k \leq \min_i\{\alpha_i\}$, then $\rk_d(f) = \rk_\ud(f) = \gradR{k}(f)$;
			\item {\rm (Theorem \ref{thm: cactus gradient monomials})} if $k = 1$, then $\cactusR_d(f) = \cactusR_\ud(f) = \cR_{_\nabla}(f)$.
			\end{enumerate}
\item \underline{Elementary symmetric forms of odd degree.} Let $d$ be odd. In Theorem \ref{thm: gradient elementary symmetric}, we prove that, if $f = \sum_{i_1< \cdots <i_d} x_{i_1}\cdots x_{i_d}$ {is the elementary symmetric polynomial of degree $d$ in $n+1$ variables}, then $\rmR_d(f) = \rmR_{1,d-1}(f) = \gradR{}(f)$.
	\end{itemize}

In addition, we provide a number of insights on different notions of rank and on their interplay. In particular, our approach suggests that classical apolarity theory is a valuable tool to study not only the symmetric rank of symmetric tensors, but also their partially symmetric ranks; indeed, the multigraded versions of apolarity theory present in the literature (see, e.g., \cite{Gal16:MultiApolarity,GalRanVil:ApolarityToric}), which are a priori more effective than the approach we are suggesting here, are generally more difficult to apply as they involve multigraded algebraic structures. Moreover, our approach proposes a systematic way to determine to what extent the symmetries of a tensor affect its rank. Moreover, by \eqref{eq: intro}, an example providing a negative answer to Question \ref{question} for certain $\underline{d} \partinto{d}$ would be an example where the original Comon's question has a negative answer. Hence, we expect that studying the intermediate steps of the entire hierarchy of partially symmetric ranks would provide new examples where the original Comon's question has a negative answer, besides the one presented in \cite{Shitov:CounterexampleComon}.

Besides a purely theoretical motivation, a better understanding of the problem posed in Question \ref{question} can provide insights from a computational point of view. Tensor rank decomposition has a number of applications in pure and applied mathematics, and determining optimal decompositions is a concrete problem for many applications. Throughout the last decades several algorithms have been developed to find optimal and nearly optimal tensor decompositions for given tensors, both in the exact and in the approximate setting. We refer to \cite[Chapters 7 and 9]{Hackbusch:TensorBook} for a discussion on these algorithms and to \cite{AllRho:PhylogeneticInvariantsGeneralMarkovModelSequenceMutation,Comon:IndependentCompAnalysis,GarStiStu:AlgebraicGeomBayNetworks,Lin:TensorProductsANOVAModels} for some examples of applications. In principle, given a symmetric tensor for which Comon's question has affirmative answer, then an optimal symmetric decomposition has the same length as an optimal (not necessarily symmetric) tensor decomposition: in applications where one is not interested in the symmetries of the decomposition but only in its length, one can use interchangeably algorithms for either the symmetric or the non-symmetric setting to compute an optimal decomposition. Similar reasoning can be applied in the case of partially symmetric decompositions.

\subsection{Structure of the paper} In Section \ref{section: basics}, we explain in more details the different notions of rank we are going to consider and we establish basic relations between them. In particular, we describe them in the framework of algebraic geometry. Moreover, we introduce algebraic tools from apolarity theory that we use in our computations. In Section \ref{sec: computations}, we prove our main results.

\section{Different notions of rank and apolarity}\label{section: basics}
In this section, we introduce basic definitions of the various notions of rank that we consider and we prove some relations among them. We also give the basics of apolarity theory which will be a fundamental tool for our approach.

Recall that $S^d V$ is the subspace of $V^{\otimes d}$ of symmetric tensors, namely tensors which are invariant under the action of the symmetric group $\frakS_d$ that permutes the tensor factors. Similarly, given a composition $\ud =(d_1,\ldots,d_m)\partinto{d}$, the space $S^\ud V := S^{d_1} V \ootimes S^{d_m}V$ is the subspace of partially symmetric tensors, namely tensors which are invariant under the action of the subgroup $\frakS_{d_1} \ttimes \frakS_{d_m} \subseteq \frakS_{d}$, where $V^{\otimes d} = \bigotimes_{j=1}^m V^{\otimes d_j}$ and $\frakS_{d_j}$ acts by permuting the tensor factors of $V^{\otimes d_j}$. In particular, a symmetric tensor may be regarded as a partially symmetric tensor, disregarding some of the additional symmetries. Hence, for any $\ud \partinto d$, we have the inclusions
\[
S^d V \subseteq S^\ud V \subseteq V^{\otimes d}.
\]
From a representation-theoretic point of view, $S^d V$ is the Cartan component of $S^\ud V$, under the diagonal action of ${GL}(V)$. 

Explicitly, for $f \in S^d V$, the \emph{polarization} of $f$ as a partially symmetric tensor in $S^\ud(V)$ (see e.g. \cite[Section~2.6.4]{Lan:TensorBook}) is given by the expression
\begin{equation}\label{eqn: polarization}
f = \frac{1}{d!}\sum_{\substack{ \alpha_1 \vvirg \alpha_{m-1} \in \bbN^{n+1} \\ \vert \alpha _i \vert = d_i }} \textbinom{d_1}{\alpha_1} \cdots \textbinom{d_{m-1}}{\alpha_{m-1}} \bfx^{\alpha_1} \ootimes \bfx^{\alpha_{m-1}} \otimes \frac{\partial^{d_1 + \cdots + d_{m-1}}} {\partial \bfx^{\alpha_1} \cdots \partial \bfx^{\alpha_{m-1}}} f .
\end{equation}
In the case $\ud = (1,d-1)$, which will be particularly interesting to us, it reduces to
\begin{equation}\label{eqn: polarization 1 d-1}
f = \frac{1}{d}\sum_{i=0}^n x_i \otimes \frac{\partial}{\partial x_i}f,
\end{equation}
that can be interpreted as a tensorial version of Euler's formula. 

\subsection{Ranks and projective varieties}\label{subsec: ranks alg vars}
Now, we include the notions of rank introduced in Definition \ref{defin: partially symmetric decomp} into the geometric framework of so-called \emph{$X$-ranks}. For any subset $\calF \subseteq \bbP^N$, let $\langle \calF \rangle$ denote the linear span of $\calF$, i.e., the smallest linear space containing~$\calF$.

\begin{definition}\label{def: X-rank}
 Let $X \subseteq \bbP^N$ be a non-degenerate projective variety and let $p \in \bbP^N$. The {\bf $X$-rank} of $p$, denoted $\rmR_X(p)$, is the minimal number of points of $X$ whose linear span contains the point $p$, i.e., the minimal $r$ such that $p \in \langle q_1 \vvirg q_r\rangle$ for some $q_1 \vvirg q_r \in X$.
\end{definition}
The notions of symmetric, tensor and partially symmetric rank introduced in Section \ref{section: intro} can be seen as $X$-ranks with respect to classical projective varieties such as Veronese, Segre and Segre-Veronese varieties, respectively.

For $\ud \partinto[m]{d}$, the \textit{$\ud$-th Segre-Veronese embedding} is the map 
\begin{align*}
 \nu_\ud : \bbP V \ttimes \bbP V &\to \bbP (S^{d_1} V \ootimes S^{d_m} V), \\
  ([v_1] \vvirg [v_m]) &\mapsto [v_1^{d_1} \ootimes v_m^{d_m}],
\end{align*}
where $[v]$ denotes the class of a vector $v\in V$ in the corresponding projective space. The image of the $\ud$-Segre-Veronese embedding is called {\it $\ud$-th Segre-Veronese variety}. When $\ud = (d)$, $\nu_d$ is the {\it Veronese embedding} of $\bbP V$ and its image is the {\it $d$-th Veronese variety}; when $\ud = (1,\ldots,1)$, $\nu_{(1,\ldots,1)}$ is the \textit{Segre embedding} of $\bbP V^{\times d}$ and its image is the \textit{Segre variety}. In particular, the $\ud$-rank of an element $f \in S^\ud V$ is the rank of $[f]$ with respect to the $\ud$-Segre-Veronese variety.

Fix two compositions $\ud,\ud' \partinto{d}$. Write $\ud \succeq \ud'$ if $\ud'$ is a refinement of $\ud$, in the sense that $\ud$ can be obtained from $\ud'$ by adding together some adjacent entries; more precisely, if $\ud = (d_1 \vvirg d_m)$ and $ \ud' = (d_1' \vvirg d_{m'}')$, then $\ud \succeq \ud'$ if and only if there exist $0 = s_0 < s_1 < \cdots < s_m = m'$ such that $ d_j = \sum _{i = s_{j-1} + 1} ^{s_j} d'_i$ for every $j$.

If $\ud \succeq \ud'$, then $S^\ud V \subseteq S^{\ud'}V$. Moreover, directly from the definition of Segre-Veronese varieties, we have the following.

\begin{lemma}\label{lemma: d geq d'}
Let $f \in S^d V$ and $\ud,\ud' \partinto{d}$ such that $\ud \succeq \ud'$. Then $\rmR_{\ud} (f) \geq \rmR_{\ud'}(f)$.
\end{lemma}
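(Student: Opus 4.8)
The plan is to reduce the inequality $\rmR_{\ud}(f)\ge\rmR_{\ud'}(f)$ to a statement about the two Segre--Veronese varieties, using the identification of the $\ud$-rank with an $X$-rank furnished by Definition \ref{def: X-rank}. Concretely, I would first observe that the refinement condition $\ud\succeq\ud'$ produces a linear inclusion $S^\ud V\subseteq S^{\ud'}V$, as already noted in the excerpt just before the statement. The geometric content I want to exploit is that, under this inclusion, the $\ud$-Segre--Veronese variety sits inside the $\ud'$-Segre--Veronese variety: every rank-one partially symmetric tensor for the coarser composition $\ud$ is also a rank-one partially symmetric tensor for the finer composition $\ud'$.

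The key computation is to verify this containment of rank-one loci directly on decomposable tensors. Using the grouping $0=s_0<s_1<\cdots<s_m=m'$ from the definition of $\ud\succeq\ud'$, a generator $v^{d_1}\otimes\cdots\otimes v^{d_m}$ of the cone over the $\ud$-Veronese comes from choosing, for each block $j$, a single vector $v_j\in V$ and forming $v_j^{d_j}=v_j^{d'_{s_{j-1}+1}}\otimes\cdots\otimes v_j^{d'_{s_j}}$ inside $S^{d'_{s_{j-1}+1}}V\otimes\cdots\otimes S^{d'_{s_j}}V$. Thus setting $w_i=v_j$ for every $i$ in the $j$-th block realizes the same tensor as a decomposable element $w_1^{d'_1}\otimes\cdots\otimes w_{m'}^{d'_{m'}}$ of the $\ud'$-Veronese. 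Hence $\nu_\ud(\bbP V^{\times m})\subseteq\nu_{\ud'}(\bbP V^{\times m'})$, viewing both as subvarieties of $\bbP(S^{\ud'}V)\supseteq\bbP(S^\ud V)$.

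Once this containment is established, the inequality is immediate from the definition of $X$-rank: if $[f]$ lies in the span of $r$ points of the $\ud$-Segre--Veronese variety, those same $r$ points lie on the $\ud'$-Segre--Veronese variety, so $[f]$ lies in the span of $r$ points of the latter, giving $\rmR_{\ud'}(f)\le\rmR_\ud(f)$. Any minimal $\ud$-decomposition $f=\sum_{i=1}^r\ell_{i,1}^{d_1}\cdots\ell_{i,m}^{d_m}$ thus yields a $\ud'$-decomposition of the same length by regrouping each factor $\ell_{i,j}^{d_j}$ into the product $\ell_{i,j}^{d'_{s_{j-1}+1}}\cdots\ell_{i,j}^{d'_{s_j}}$ across the corresponding block, which is arguably the cleanest way to phrase the argument without invoking the projective language at all.

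I do not expect a genuine obstacle here; the statement is essentially a bookkeeping lemma. The only point requiring mild care is making the block-regrouping of the exponents explicit and checking that it exactly matches the combinatorial definition of refinement via the indices $s_0<\cdots<s_m$, so that a decomposable tensor for $\ud$ is recognized as decomposable for $\ud'$ rather than merely of low rank. Everything else is a one-line consequence of the $X$-rank definition.
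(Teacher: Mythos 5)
Your proposal is correct and follows essentially the same route as the paper: both establish the containment $\nu_{\ud}(\bbP V^{\times m}) \subseteq \nu_{\ud'}(\bbP V^{\times m'})$ of Segre--Veronese varieties and then deduce the inequality immediately from the definition of $X$-rank, with your explicit block-regrouping of exponents simply spelling out what the paper treats as evident from the definitions. No gap; your version is just a more detailed write-up of the same argument.
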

\begin{proof}
If $\ud \succeq \ud'$, then $\nu_{\ud}(\bbP V^{\times m}) \subseteq \nu_{\ud'}(\bbP V^{\times m'})$.
In particular, every set of points contained in $\nu_{\ud}(\bbP V^{\times m})$ whose linear span contains $[f]$ is also a set of points contained in $\nu_{\ud'}(\bbP V^{\times m'})$ whose linear span contains $[f]$; therefore, we obtain the desired inequality between the ranks.
\end{proof}

In fact, it is clear from its proof that Lemma \ref{lemma: d geq d'} holds for every element of $S^\ud V$. Here, we only deal with elements of $S^d V$, namely totally symmetric tensors. For this reason, the value $\rmR_\ud(f)$ does not depend on the order of the entries of $\ud$. Hence, one can consider an ordering similar to $\succeq$ on the set of partitions of $d$ and correspondingly one has an analog of Lemma \ref{lemma: d geq d'}. However, for the ease of notation, we keep working with compositions of integers rather than partitions.

The notion of simultaneous rank used in Definition \ref{defin: gradient rank} to define the gradient rank of $f \in S^d V$ can be generalized to the setting of $X$-rank as well.

\begin{definition}
Let $X \subseteq \bbP^N$ be a non-degenerate projective variety and let $\calF \subseteq \bbP^N$ be a subset. The {\bf simultaneous $X$-rank} of $\calF$, denoted $\rmR_X(\calF)$, is the minimal number of points on $X$ whose linear span contains $\calF$, i.e., the minimal $r$ such that there exist $q_1 \vvirg q_r \in X$ with $\calF \subseteq \langle q_1 \vvirg q_r \rangle$, or equivalently $\langle \calF \rangle \subseteq \langle q_1 \vvirg q_r \rangle$.
\end{definition}

In this general setting, we provide several elementary facts which will give us some insight on the gradient rank. The following result shows that simultaneous $X$-rank of a set of points can be viewed as the rank of a unique point in a larger ambient space with respect to certain Segre variety $\bbP^s \times X$. The statement already appeared in \cite[Theorem~2.5]{BL:RanksTensorsAndGeneralization} in the setting of border rank. We include the proof to highlight that there is a one-to-one correspondence between sets of points on $X$ providing a simultaneous decomposition for $\calF$ and sets of points on the Segre variety $\bbP^s \times X$ providing a decomposition of the corresponding point. See also \cite[Section~1.3]{Teit:GeometricLowerBoundsGeneralizedRanks}.
\begin{lemma}[{\cite[Theorem 2.5]{BL:RanksTensorsAndGeneralization}}]\label{lemma: simul as segre-veronese}
Let $X \subseteq \bbP W$, $\calF = \{ p_1 \vvirg p_s \} \subseteq \bbP W$, and fix $w_1 \vvirg w_s \in W$ such that $p _i = [w_i] \in \bbP W$. Let $a_1 \vvirg a_s$ be a basis of an $s$-dimensional vector space $A$ and consider $t = \textsum_{i=1}^s a_i \otimes w_i  \in  A \otimes W$. Then
\[
 \rmR_X(\calF) = \rmR_{\nu_{1,1}(\bbP A \times X)} ([t]).
\]
\end{lemma}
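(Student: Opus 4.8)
The plan is to establish the equality by proving the two inequalities separately, exhibiting in each case an explicit passage between a simultaneous decomposition of $\calF$ on $X$ and a decomposition of $[t]$ on the Segre variety $\nu_{1,1}(\bbP A \times X)$. The whole argument rests on the identification of $A \otimes W$ with the space of linear maps $A^* \to W$ (equivalently, on fixing the basis $a_1 \vvirg a_s$ together with its dual basis $a_1^* \vvirg a_s^*$), under which $t = \textsum_{i=1}^s a_i \otimes w_i$ is precisely the map sending $a_i^*$ to $w_i$. This is what makes each $w_i$ recoverable from any expression of $t$ as a sum of rank-one tensors.

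First I would prove $\rmR_{\nu_{1,1}(\bbP A \times X)}([t]) \leq \rmR_X(\calF)$. Suppose $\rmR_X(\calF) = r$, so there are points $q_1 \vvirg q_r \in X$, say $q_j = [u_j]$ with $u_j \in W$, such that $\calF \subseteq \langle q_1 \vvirg q_r\rangle$. Then each representative can be written $w_i = \textsum_{j=1}^r c_{ij} u_j$ for suitable scalars $c_{ij}$, and substituting into $t$ and interchanging the order of summation gives
\[
t = \textsum_{i=1}^s a_i \otimes w_i = \textsum_{j=1}^r b_j \otimes u_j, \qquad b_j := \textsum_{i=1}^s c_{ij} a_i \in A.
\]
Each nonzero summand $b_j \otimes u_j$ is a rank-one tensor with $[u_j] \in X$, hence $[b_j \otimes u_j] \in \nu_{1,1}(\bbP A \times X)$; discarding the vanishing summands shows that $[t]$ lies in the span of at most $r$ points of the Segre variety.

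For the reverse inequality $\rmR_X(\calF) \leq \rmR_{\nu_{1,1}(\bbP A \times X)}([t])$, suppose $[t]$ has $\nu_{1,1}(\bbP A \times X)$-rank $r$. After absorbing scalars into the rank-one factors, this means $t = \textsum_{j=1}^r b_j \otimes u_j$ with $b_j \in A$ and $[u_j] \in X$. Contracting with the dual basis element $a_i^*$ in the first factor recovers
\[
w_i = (a_i^* \otimes \id_W)(t) = \textsum_{j=1}^r a_i^*(b_j)\, u_j,
\]
so each $w_i$, and hence each $p_i \in \calF$, lies in $\langle [u_1] \vvirg [u_r]\rangle$ with every $[u_j] \in X$. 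Therefore $\calF$ admits a simultaneous decomposition on $X$ of length at most $r$, which combined with the first step yields the equality.

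I do not expect a serious obstacle here: the argument is essentially a bookkeeping of coefficients together with the contraction $a_i^* \otimes \id_W$. The only points requiring minor care are the degenerate cases where some combination vector $b_j$ vanishes (which only lowers the count and is harmless) and the distinction between projective representatives and the affine vectors used to form $t$. It is worth emphasizing, as the statement suggests, that the two substitutions above are mutually inverse: the matrix $(c_{ij})$ recording the simultaneous decomposition of $\calF$ is exactly the matrix whose columns express the $b_j$ in the coordinates $a_1 \vvirg a_s$, so that once the points $[u_j]$ of $X$ are fixed, the correspondence between the two families of decompositions is genuinely one-to-one.
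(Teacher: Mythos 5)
Your proof is correct and follows essentially the same route as the paper: one direction by substituting the expressions $w_i = \sum_j c_{ij} u_j$ into $t$ and interchanging the order of summation, the other by contracting $t$ against the dual basis of $A$ to recover each $w_i$ as a combination of the points $[u_j] \in X$. The paper's proof is the same coefficient bookkeeping (with $b_k$ playing the role of your $a_i^*$), and it likewise emphasizes the resulting one-to-one correspondence between the two families of decompositions.
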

\begin{proof}
Suppose $q_1 \vvirg q_r $ are points of $X$ such that $\calF \subseteq \langle q_1 \vvirg q_r \rangle$. Let $z_1 \vvirg z_r \in W$ such that $[z_j] = q_j \in \bbP W$. By definition, $w_i = \sum_{j=1}^r \lambda _{ij} z_j$, for some scalars $\lambda_{ij}$. Thus, we obtain
\begin{align}\label{eq1: lemma 2.5}
t & =  \textsum_{i=1}^s a_i \otimes w_i = \textsum_{\substack{i = 1,\ldots,s \\ j = 1,\ldots,r}} a_i \otimes (\lambda _{ij}z_j) = \nonumber \\
&= \textsum_{\substack{i = 1,\ldots,s \\ j = 1,\ldots,r}} (\lambda_{ij} a_i) \otimes z_j = \textsum_{j=1}^r \left( \textsum_{i = 1}^s \lambda_{ij}a_i \right) \otimes z_j;
\end{align}
hence, $t$ is a linear combination of the $r$ elements $\left( \textsum_i \lambda_{ij}a_i \right) \otimes z_j \in A\otimes W$, for $j=1 \vvirg r$. Taking the corresponding points on $\nu_{1,1}(\bbP A \times X)$, we get $  \rmR_X(\calF) \geq  \rmR_{\nu_{1,1}(\bbP A \times X)} ([t])$.

Conversely, suppose $[t] \in \langle q_1,\ldots,q_r \rangle$, for some $q_j = [y_j \otimes z_j] \in \nu_{1,1}(\bbP A \times X)$. Therefore, in the vector space $A\otimes W$, we have
\begin{equation}\label{eq2: lemma 2.5}
	t =  \textsum_{i=1}^s a_i \otimes w_i = \textsum_{j=1}^r c_j  y_j \otimes z_j, \quad \text{ for some } c_j \in \Bbbk.
\end{equation}
Let $b_1,\ldots,b_s$ be the basis of $A^*$ dual to $a_1,\ldots,a_s$, i.e., $b_k(a_i) = \delta_{ik}$. For every $k = 1 \vvirg s$, apply $b_k$ to both sides of the second equality in \eqref{eq2: lemma 2.5}. Hence, we obtain $w_k = \textsum_{j=1}^r c_j b_k(y_j) z_j$, which expresses every $w_k$ as a linear combination of the $r$ elements $z_1 \vvirg z_r$, with $[z_j] \in X$. This shows $\calF \subseteq \langle [z_1] \vvirg [z_r] \rangle$; thus $\rmR_{\nu_{1,1}(\bbP A \times X)} ( P) \geq \rmR_X(\calF)$. This concludes the proof.
\end{proof}

In the context of gradient rank, we deduce the following.
\begin{corollary}\label{corol: gradient rank as segre-veronese}
 Let $f\in S^d V$. Then, for every $\ud \partinto[m]{d}$ with $d_m = d-k$, we have
 \begin{equation} \label{eqn: corol gradient rank inequality}
 \rmR_{\ud} (f) \geq \gradR{k}(f).
\end{equation}
In particular, 
\begin{equation}\label{eqn: corol gradient rank equality}
\rmR_{(1,d-1)}(f) = \gradR{}(f).
\end{equation}
\end{corollary}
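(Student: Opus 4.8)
The statement packages two claims: the general inequality \eqref{eqn: corol gradient rank inequality}, valid for every $\ud \partinto[m]{d}$ with $d_m = d-k$, and its refinement to an equality \eqref{eqn: corol gradient rank equality} when $k=1$. The plan is to prove the inequality by contracting a minimal $\ud$-decomposition, and then to close the gap for $k=1$ by means of the tensorial Euler formula \eqref{eqn: polarization 1 d-1}.

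For \eqref{eqn: corol gradient rank inequality}, I would fix a minimal $\ud$-decomposition and read it as an identity of partially symmetric tensors
\[
f = \textsum_{i=1}^r \ell_{i,1}^{d_1}\otimes\cdots\otimes\ell_{i,m}^{d_m} \in S^\ud V, \qquad r = \rmR_\ud(f),
\]
whose last tensor factor lies in $S^{d_m}V = S^{d-k}V$. Given a multi-index $\alpha$ with $|\alpha| = k = d_1 + \cdots + d_{m-1}$, I would factor the order-$k$ differential operator $\partial^k/\partial\bfx^\alpha$ as a product $D_1\cdots D_{m-1}$ with $D_j$ of order $d_j$, and apply the operator $D_1\otimes\cdots\otimes D_{m-1}\otimes\id$ to the displayed identity. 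Since each $D_j$ maps $S^{d_j}V$ to $\Bbbk$, this produces
\[
(D_1\otimes\cdots\otimes D_{m-1}\otimes\id)(f) = \textsum_{i=1}^r\Big(\textprod_{j=1}^{m-1}D_j(\ell_{i,j}^{d_j})\Big)\,\ell_{i,m}^{d-k},
\]
an element of $\langle \ell_{1,m}^{d-k},\ldots,\ell_{r,m}^{d-k}\rangle$.

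The one point that genuinely requires care --- and the step I would treat as the crux --- is the identification of this factorwise contraction with the honest partial derivative: I must show that $(D_1\otimes\cdots\otimes D_{m-1}\otimes\id)(f)$ equals, up to a nonzero scalar, $\partial^k f/\partial\bfx^\alpha \in S^{d-k}V$. This is precisely where the total symmetry of $f$ enters, via the polarization \eqref{eqn: polarization}: contracting only the first $m-1$ blocks of the partially symmetric tensor $f$ reproduces the total derivative, whereas naively differentiating the polynomial product $\ell_{i,1}^{d_1}\cdots\ell_{i,m}^{d_m}$ would spread derivatives onto the last factor and break the argument. By linearity it suffices to verify the identity on powers $f = \ell^d$, which span $S^dV$, where $f$ polarizes to $\ell^{d_1}\otimes\cdots\otimes\ell^{d_m}$ and both sides are visibly proportional to $a^\alpha\,\ell^{d-k}$ for $\ell = \sum_l a_l x_l$. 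Granting this, every element of $\nabla^k f$ lies in the span of the $r$ points $\ell_{1,m}^{d-k},\ldots,\ell_{r,m}^{d-k}$ of the Veronese variety $\nu_{d-k}(\bbP V)$, so $\gradR{k}(f) \leq r = \rmR_\ud(f)$, which is \eqref{eqn: corol gradient rank inequality}.

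Finally, for \eqref{eqn: corol gradient rank equality} I would use that $k=1$ forces $\ud = (1,d-1)$, the unique composition of $d$ with $d_m = d-1$, so \eqref{eqn: corol gradient rank inequality} already gives $\rmR_{(1,d-1)}(f) \geq \gradR{}(f)$. For the reverse inequality, starting from a minimal simultaneous decomposition $\partial f/\partial x_j = \sum_{i=1}^r c_{j,i}\,\ell_i^{d-1}$ with $r = \gradR{}(f)$ and $c_{j,i}\in\Bbbk$, I would substitute into \eqref{eqn: polarization 1 d-1} to obtain
\[
f = \textfrac1d\textsum_{j=0}^n x_j\otimes\frac{\partial f}{\partial x_j} = \textsum_{i=1}^r\Big(\textfrac1d\textsum_{j=0}^n c_{j,i}\,x_j\Big)\otimes\ell_i^{d-1},
\]
so that, setting $\ell_i' := \frac1d\sum_{j=0}^n c_{j,i}\,x_j$, the expression $f = \sum_{i=1}^r \ell_i'\otimes\ell_i^{d-1}$ is a $(1,d-1)$-decomposition of length $r$; hence $\rmR_{(1,d-1)}(f)\leq\gradR{}(f)$ and the two agree. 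Equivalently, one may obtain \eqref{eqn: corol gradient rank equality} in a single step from Lemma \ref{lemma: simul as segre-veronese}, observing that for $k=1$ the Segre variety $\nu_{1,1}(\bbP V\times\nu_{d-1}(\bbP V))$ is exactly the Segre--Veronese variety $\nu_{1,d-1}(\bbP V\times\bbP V)$, so that the simultaneous rank of $\nabla f$ coincides tautologically with $\rmR_{(1,d-1)}(f)$.
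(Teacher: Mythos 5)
Your proof is correct, and the step you rightly single out as the crux --- that the factorwise contraction $D_1\otimes\cdots\otimes D_{m-1}\otimes\id$ applied to the polarization of $f$ recovers $\partial^k f/\partial\bfx^\alpha$ up to a universal nonzero scalar --- does hold, and verifying it by linearity on the spanning set of powers $\ell^d$ is a legitimate way to settle the normalization. Your route, however, differs in packaging from the paper's. The paper deduces both claims from Lemma \ref{lemma: simul as segre-veronese}: it identifies $\gradR{k}(f)$ with the rank of a tensor in $S^{\udelta}V\otimes S^{d-k}V$ (where $\udelta=(d_1,\ldots,d_{m-1})$) with respect to the Segre variety $\nu_{1,1}(\bbP S^{\udelta}V\times\nu_{d-k}(\bbP V))$, observes via the polarization \eqref{eqn: polarization} that this tensor is $f$ itself, and then obtains \eqref{eqn: corol gradient rank inequality} from the inclusion of varieties $\nu_{\ud}(\bbP V^{\times m})\subseteq\nu_{1,1}(\bbP S^{\udelta}V\times\nu_{d-k}(\bbP V))$, with \eqref{eqn: corol gradient rank equality} following because for $k=1$ this inclusion is an equality ($\nu_1$ being the identity). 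You instead inline the content of that lemma rather than citing it: your contraction of a minimal $\ud$-decomposition is the explicit, coordinate-level form of ``a decomposition on the subvariety is a decomposition on the ambient Segre, read back through the dual basis,'' and your reassembly of a simultaneous decomposition of $\nabla f$ via Euler's formula \eqref{eqn: polarization 1 d-1} reproduces, in this special case, the other direction of the lemma's proof. What the paper's route buys is brevity and a reusable geometric statement (Lemma \ref{lemma: simul as segre-veronese} also serves as the foil for the cactus-rank failure in Example \ref{example: non simul SV for cactus}); what yours buys is a self-contained, purely algebraic argument that never leaves the language of decompositions, and which makes visible the term-by-term correspondence between minimal $\ud$-decompositions of $f$ and simultaneous decompositions of $\nabla^k f$.
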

\begin{proof}
Let $\udelta = (d_1 \vvirg d_{m-1})$, so that $\udelta  \partinto k$. By Lemma \ref{lemma: simul as segre-veronese}, we have $\gradR{k}(f) = \rmR_{\nu_{1,1}( \bbP A \times \nu_{d-k}(\bbP V))} ( [t])$ where $\dim A = \dim S^{\udelta} V$ and
\[
t = \sum_{\substack{\underline{\alpha} =  (\alpha_1 \vvirg \alpha_{m-1}) : \\ \vert \alpha_j \vert = d_j }} a_{\underline{\alpha}} \otimes \frac{\partial^k}{\partial \bfx^{\alpha_1 + \cdots \alpha_{m-1}}}f. 
\]
In particular, one can define an isomorphism $A \simeq S^{\udelta} V$ by $a_{\underline{\alpha}} \mapsto c_{\underline{\alpha}}\bfx^{\alpha_1} \ootimes \bfx^{\alpha_{m-1}}$, where the $c_{\underline{\alpha}}$'s are suitable coefficients so that $t$ coincides with $f$ regarded as an element of $S^{\ud} V$ as presented in \eqref{eqn: polarization}.

We have the inclusions 
\begin{equation}\label{eqn: inclusions in lemma grad simul}
\nu_\ud ( \bbP V^{\times m}) \subseteq \nu_{1,d-k}(\bbP S^{\udelta} V \times \bbP V) = \nu_{1,1} (\bbP S^{\udelta} V \times \nu_{d-k}(\bbP V)) ;
\end{equation}
this shows $\rmR_\ud ([f]) \geq \rmR_{\nu_{1,1}(\bbP S^{\udelta} V \times \nu_{d-k}(\bbP V))} ([f])$ and we obtain the inequality \eqref{eqn: corol gradient rank inequality}.

The second statement follows from the fact that when $k=1$, we have $\udelta = (1)$, and therefore the first inclusion in \eqref{eqn: inclusions in lemma grad simul} is an equality. \end{proof}

The non-symmetric analog of the equality in \eqref{eqn: corol gradient rank equality} is a known characterization of tensor rank: given $t \in V_1\otimes\ldots\otimes V_d$, the simultaneous rank of the tensors $\{t(\omega_1) \in V_2\otimes\ldots\otimes V_d ~|~ \omega_1 \in V_1^*\}$ is equal to the tensor rank of $t$ {(see \cite[Theorem 2.1]{Fri13:TensorsBorderRank}, for $d = 3$, or \cite[Exercise 3.1.1.2]{Lan:TensorBook})}. We point out that the equality in \eqref{eqn: corol gradient rank equality} is a consequence of the fact that every element of $\bbP V$ has rank one, because $\nu_1$ is the identity map. When $k \geq 2$, this is no longer true and indeed \eqref{eqn: corol gradient rank inequality} can be a strict inequality, as shown in the following example. 

\begin{example}
Let $\dim V = n+1$. Let $f \in S^3 V$ be any element with $\rmR_3(f) > n+1$, which exists for every $n \geq 1$. Then $\langle \nabla^2 f \rangle \subseteq V$, so $\gradR{2}(f) \leq n+1$, showing $\rmR_3(f) > \gradR{2}(f)$. 
\end{example}

Lemma \ref{lemma: d geq d'} and Corollary \ref{corol: gradient rank as segre-veronese} establish the chain of inequalities in \eqref{eq: intro}.

\subsection{Apolarity theory}\label{subsec: apolarity}

A classical approach to the Waring problem is based on \emph{apolarity theory}, which is the study of the action of the ring of polynomial differential operators on the polynomial ring; see \cite{IarrKan:PowerSumsBook, Gera:InvSysFatPts}. In this section, we recall basic facts on classical apolarity for polynomials and its generalization to (partially symmetric) tensors and simultaneous ranks.

Given a vector space $V$ with basis $\{x_0,\ldots,x_n\}$, let $\{y_0,\ldots,y_n\}$ be its dual basis of~$V^*$. The symmetric algebra $S^\bullet V^*$ can be identified with the algebra of differential operators on $x_0 \vvirg x_n$ with constant coefficients, by identifying $y_j$ with $\frac{\partial}{\partial x_j}$. Hence, for every $i,j$, with $i \leq j$, there is a bilinear map
\begin{align}\label{eq: apolar action}
\circ : S^i V^* \times S^j V & \to S^{j-i} V, \nonumber \\
(\phi, f) & \mapsto \phi \circ f := \phi\left(\textfrac{\partial}{\partial x_0} \vvirg \textfrac{\partial}{\partial x_n}\right) f(\bfx),
\end{align} 
defined by differentiation. In particular, on the monomial basis, for any $\alpha,\beta\in\bbN^{n+1}$ multi-indices with $|\alpha| = j$ and $|\beta| = i$, we have
\[
\bfy^\beta \circ \bfx^\alpha = 
	\begin{cases}
		\frac{\alpha!}{(\alpha-\beta)!}\bfx^{\alpha - \beta} := \prod_{i=0}^n\frac{\alpha_i!}{(\alpha_i-\beta_i)!}  x_i^{\alpha_i - \beta_i} & \text{ if } \beta \leq \alpha \text{, i.e., } \beta_i \leq \alpha_i, \text{ for any } i; \\
		0 & \text{ otherwise}.
	\end{cases}
\] 
Set $S^j V = 0$ whenever $j < 0$ and extend this map via bilinearity to define the {\it apolar action} of $S^\bullet V^*$ on $S^\bullet V$, that we still denote by $\circ$.
\begin{definition}
Given $f \in S^dV$, the {\bf apolar ideal} of $f$ is the ideal in $S^\bullet V^*$ of polynomial differential operators which annihilate $f$, i.e., 
\[
\Ann_{d}(f) := \{\phi \in S^\bullet V^* : \phi \circ f = 0\}.
\]
\end{definition}
The ideal $\Ann_d(f)$ is homogeneous and $(\Ann_d(f) )_i = S^i V^*$, for $i > d$; that is, $\Ann_d(f)$ is Artinian with socle degree $d$. The $i$-th {\it catalecticant} of $f$ is the linear map 
\begin{equation}\label{def:catalecticant}
\begin{aligned}
\cat_i(f) : S^i V^* &\to S^{d-i}V, \nonumber \\ 
\varphi &\mapsto \varphi \circ f. 
\end{aligned}
\end{equation}
Note that $(\Ann_d(f) )_i = \ker \left(\cat_i(f)\right)$, for every $i$.

\begin{remark}
We point out that apolar ideals of homogeneous polynomials are graded Artinian Gorenstein ideals. Moreover, {\it Macaulay's duality} provides a one-to-one correspondence between graded Artinian Gorenstein algebras of socle degree $d$ and homogeneous polynomials of degree $d$; see, e.g., \cite[Theorem 8.7]{Gera:InvSysFatPts} or \cite[Section~21.2]{Eis:CommutativeAlgebra}.
\end{remark}

Together with the interpretation of $S^\bullet V^*$ as ring of polynomial differential operators, we have the natural structure of a ring of polynomials on $V$. In particular, homogeneous ideals in $S^\bullet V^*$ define algebraic varieties and schemes in $\bbP V$. In this way, from the apolar ideal we may obtain Waring decompositions of $f$ as follows.

\begin{lemma}[Apolarity Lemma -- classical version, {\cite[Lemma 1.15]{IarrKan:PowerSumsBook}}]\label{lemma: apolarity homog}
Let $f \in S^dV$. Let $I_\bbX \subseteq S^\bullet V^*$ be the ideal defining a set of points $\bbX = \{[\ell_1],\ldots,[\ell_r]\}\subseteq \bbP V$. Then the following are equivalent:
\begin{enumerate}[{\rm (i)}]
\item $I_ \bbX \subseteq \Ann_d(f)$;
\item $f = \sum_{i=1}^r \lambda_i \ell_i^{d}$, for some $\lambda_i \in \Bbbk$.
\end{enumerate}
If conditions {\rm (i)} and {\rm (ii)} hold, the set $\bbX$ is said to be {\bf apolar} to $f$.
\end{lemma}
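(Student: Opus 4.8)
The plan is to reduce everything to a single differentiation identity together with the non-degeneracy of the apolar pairing in top degree. First I would record the key computation: for a linear form $\ell \in V$ and a homogeneous operator $\phi \in S^i V^*$ with $i \leq d$, one has
\[
\phi \circ \ell^d = \frac{d!}{(d-i)!}\, \phi(\ell)\, \ell^{d-i},
\]
where $\phi(\ell)$ denotes the evaluation of $\phi$, viewed as a polynomial function on $V$, at the point $\ell$. This follows by checking it on a monomial $\phi = \bfy^\beta$ via the explicit formula for $\bfy^\beta \circ \bfx^\alpha$ and the chain rule, then extending by linearity. In the critical case $i = d$ it reads $\phi \circ \ell^d = d!\,\phi(\ell)$.

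For the implication (ii) $\Rightarrow$ (i): assume $f = \sum_i \lambda_i \ell_i^d$. Since $I_\bbX$ is homogeneous, it suffices to show that every homogeneous $\phi \in I_\bbX$ annihilates $f$. If $\deg \phi > d$ this is automatic because $f$ has degree $d$; if $\deg \phi = i \leq d$, then $\phi$ vanishes at each point $[\ell_i] \in \bbP V$, so $\phi(\ell_i) = 0$, and the identity above gives $\phi \circ f = \sum_i \lambda_i \frac{d!}{(d-i)!}\phi(\ell_i)\ell_i^{d-i} = 0$. Hence $I_\bbX \subseteq \Ann_d(f)$.

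For the converse (i) $\Rightarrow$ (ii), which is the main point, I would argue by duality in degree $d$. The apolar pairing $S^d V^* \times S^d V \to \Bbbk$ is non-degenerate: on monomials $\bfy^\beta \circ \bfx^\alpha = \alpha!\,\delta_{\alpha\beta}$ when $|\alpha|=|\beta|=d$, so the monomial bases are dual up to nonzero scalars. Set $U := \langle \ell_1^d \vvirg \ell_r^d\rangle \subseteq S^d V$ and let $U^\perp \subseteq S^d V^*$ be its orthogonal complement under this pairing. By the case $i=d$ of the identity, $\phi \in U^\perp$ if and only if $\phi(\ell_i)=0$ for all $i$, that is, $U^\perp = (I_\bbX)_d$. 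Non-degeneracy then yields $U = (U^\perp)^\perp = \bigl((I_\bbX)_d\bigr)^\perp$, where the last orthogonal is taken inside $S^d V$. Now the hypothesis $I_\bbX \subseteq \Ann_d(f)$ gives in particular $(I_\bbX)_d \subseteq \ker \cat_d(f)$, i.e. $\phi \circ f = 0$ for every $\phi \in (I_\bbX)_d$; this says exactly that $f \in \bigl((I_\bbX)_d\bigr)^\perp = U$, and therefore $f = \sum_i \lambda_i \ell_i^d$ for suitable scalars $\lambda_i$.

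The main obstacle is the bookkeeping in the converse: one must check that only the degree-$d$ part of the inclusion $I_\bbX \subseteq \Ann_d(f)$ is actually needed, and that $(I_\bbX)_d$ is genuinely the full orthogonal complement of $U$. This is where the non-degeneracy of the top-degree apolar pairing is essential, since it is what converts the vanishing conditions cutting out the points $\bbX$ into membership in the linear span of the powers $\ell_i^d$. Everything else rests on the elementary differentiation identity of the first step.
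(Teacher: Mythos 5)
Your proof is correct, and it is essentially the standard argument for the Apolarity Lemma: the paper itself gives no proof, citing \cite[Lemma 1.15]{IarrKan:PowerSumsBook}, and your top-degree duality argument (reducing (i) $\Rightarrow$ (ii) to the non-degenerate pairing $S^dV^* \times S^dV \to \Bbbk$ and the identification $(I_\bbX)_d = \langle \ell_1^d \vvirg \ell_r^d\rangle^\perp$, which uses $\operatorname{char}\Bbbk = 0$) is precisely the one found in that reference. The only blemish is cosmetic: in the direction (ii) $\Rightarrow$ (i) you use $i$ both for $\deg\phi$ and as the summation index over the points, which should be disambiguated.
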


Via the Apolarity Lemma, the problem of determining Waring ranks and Waring decompositions of a homogeneous polynomial can be approached by analyzing ideals of sets of points contained in its apolar ideal. 

Note that condition {\rm (ii)} of Apolarity Lemma can be rephrased by saying that, in the same notation as the statement, $[f] \in \langle \nu_d(\bbX)\rangle$. In this form, Apolarity Lemma holds more generally for possibly not reduced $0$-dimensional schemes, see e.g. \cite[Lemma~1]{BJMR:PolynomialsGivenHF}. In particular, if  $\bbX \subseteq \bbP V$ is a $0$-dimensional scheme, then $I_\bbX \subseteq \Ann_d(f)$ if and only if $[f] \in \langle \nu_{d} (\bbX) \rangle$, where the span of a $0$-dimensional scheme is the zero set of the linear forms in its defining ideal.

Moreover, apolarity theory extends to partially symmetric tensors, and even more generally to the context of toric varieties, see e.g. \cite{Gal16:MultiApolarity, GalRanVil:ApolarityToric, Ven18}. For any $\ud \partinto{d}$, the space $S^\ud V$ may be regarded as the multi-homogeneous component of multi-degree $\ud$ in the ring $\Bbbk[x_{ij} : i = 1 \vvirg m , j= 0 \vvirg n]$. In this setting, the apolar action is naturally multigraded and the apolar ideal of $f \in S^\ud V$ is multi-homogeneous; denote it $\Ann_\ud (f)$. Recall that multi-homogeneous ideals define algebraic varieties and schemes in $\bbP V ^{\times m}$. From the toric version of Apolarity Lemma, e.g., \cite[Lemma 1.3]{GalRanVil:ApolarityToric} or \cite[Proposition 3.8]{Gal16:MultiApolarity}, the multi-homogeneous analog of Lemma \ref{lemma: apolarity homog} is as follows.

\begin{lemma}[Apolarity Lemma -- multigraded version]\label{lemma: apolarity multihom}
Let $f \in S^\ud V$. Let $I_\bbX$ be the multi-homogeneous ideal defining a set of points $\bbX = \{ q_1 \vvirg q_r \} \subseteq \bbP V^{\times m}$, with $q_j = ( [\ell_{j,1}] \vvirg [\ell_{j,m}])$. Then the following are equivalent:
\begin{enumerate}[{\rm (i)}]
\item $I_ \bbX \subseteq \Ann_\ud(f)$;
\item $f = \sum_{i=1}^r \lambda_j \ell_{j,1}^{d_1} \ootimes \ell_{j,m}^{d_m}$, for some $\lambda_j \in \Bbbk$.
\end{enumerate}
If conditions {\rm (i)} and {\rm (ii)} hold, the set $\bbX$ is said to be {\bf $\ud$-apolar} to $f$.
\end{lemma}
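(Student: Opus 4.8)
The plan is to mirror the proof of the classical Apolarity Lemma (Lemma~\ref{lemma: apolarity homog}), carrying out every step factor by factor and reducing the whole statement to the single multidegree $\ud$. The one computational ingredient I would isolate first is the behaviour of the apolar action on a rank-one partially symmetric tensor. Since the multigraded operator ring acts on $\Bbbk[x_{ij}]$ one tensor factor at a time (the operator $\partial/\partial x_{ij}$ only differentiates the $i$-th slot), the action of a multihomogeneous $\phi$ of multidegree $\ue = (e_1 \vvirg e_m)$ with $\ue \leq \ud$ on $g_j := \ell_{j,1}^{d_1} \ootimes \ell_{j,m}^{d_m}$ factors as a product of classical one-variable actions. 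Using the elementary power rule $\eta^{e} \circ \ell^{d} = \tfrac{d!}{(d-e)!} \eta(\ell)^{e} \ell^{d-e}$ (valid for a linear form $\eta \in V^*$, and extended by linearity since $e$-th powers span $S^{e}V^{*}$ in characteristic zero), I would obtain the clean identity
\begin{equation*}
\phi \circ g_j \;=\; c_{\ue}\, \phi(q_j)\, \bigl(\ell_{j,1}^{d_1-e_1} \ootimes \ell_{j,m}^{d_m-e_m}\bigr), \qquad c_{\ue} = \textprod_{i=1}^{m} \tfrac{d_i!}{(d_i-e_i)!} \neq 0,
\end{equation*}
where $\phi(q_j)$ is the evaluation of $\phi$ at the chosen representative $(\ell_{j,1} \vvirg \ell_{j,m})$, whose vanishing is independent of the representative. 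The crucial feature is that the residual tensor factors out of the sum defining $\phi$, so the entire dependence on $\phi$ passes through the single scalar $\phi(q_j)$.

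With this identity in hand, the implication (ii)$\Rightarrow$(i) is immediate. Assuming $f = \sum_j \lambda_j g_j$, I would take any multihomogeneous $\phi \in I_{\bbX}$: if its multidegree $\ue$ is not $\leq \ud$ then $\phi \circ f = 0$ for degree reasons, while if $\ue \leq \ud$ then $\phi \circ f = c_{\ue} \sum_j \lambda_j \phi(q_j) (\ell_{j,1}^{d_1-e_1} \ootimes \ell_{j,m}^{d_m - e_m}) = 0$ because $\phi$ vanishes at every $q_j \in \bbX$. Since both $I_{\bbX}$ and $\Ann_{\ud}(f)$ are multihomogeneous, checking this on multihomogeneous generators gives $I_{\bbX} \subseteq \Ann_{\ud}(f)$.

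For the converse (i)$\Rightarrow$(ii) I would argue by duality in multidegree $\ud$, which is all that is needed. Let $U := \langle g_1 \vvirg g_r \rangle = \langle \nu_{\ud}(\bbX) \rangle \subseteq S^{\ud}V$, and recall that the top apolar pairing $S^{\ud}V^{*} \times S^{\ud}V \to \Bbbk$ is perfect (it is diagonal with nonzero diagonal entries on the monomial basis, using characteristic zero). Specializing the identity above to $\ue = \ud$ shows that, for $\psi \in S^{\ud}V^{*}$, the value $\psi \circ g_j = c_{\ud}\, \psi(q_j)$ is a scalar, so the orthogonal complement $U^{\perp}$ consists exactly of the multidegree-$\ud$ forms vanishing on $\bbX$, i.e. $U^{\perp} = (I_{\bbX})_{\ud}$. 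The hypothesis $I_{\bbX} \subseteq \Ann_{\ud}(f)$ yields in particular $(I_{\bbX})_{\ud} \subseteq (\Ann_{\ud}(f))_{\ud}$, which says precisely that $\psi \circ f = 0$ for every $\psi \in U^{\perp}$. By perfectness of the pairing, $f \in (U^{\perp})^{\perp} = U$, that is $f = \sum_j \lambda_j g_j$ for some scalars $\lambda_j$, which is (ii).

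The genuinely nontrivial point, and the step I would be most careful about, is this converse direction: one must know that the degree-$\ud$ graded piece of the ideal of the reduced points $\bbX$ really is the space of \emph{all} multidegree-$\ud$ forms vanishing on $\bbX$, and that the apolar form is a perfect pairing in each multidegree. Both facts are standard, but they are exactly where the multiprojective geometry enters, and they are what make the identification $U^{\perp} = (I_{\bbX})_{\ud}$ legitimate. I would also emphasize that only the component $(I_{\bbX})_{\ud}$ of the inclusion is actually used, mirroring the classical situation. An alternative, if one prefers not to reprove these facts, is simply to invoke the toric Apolarity Lemma of \cite{GalRanVil:ApolarityToric, Gal16:MultiApolarity} for the $\ud$-Segre--Veronese variety, of which the present statement is the special case of a reduced apolar set.
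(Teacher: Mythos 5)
Your proof is correct, but it takes a genuinely different route from the paper: the paper offers no self-contained argument at all, deriving the lemma instead as a special case of the toric/multigraded Apolarity Lemma cited from the literature (\cite[Lemma 1.3]{GalRanVil:ApolarityToric} and \cite[Proposition 3.8]{Gal16:MultiApolarity}) --- precisely the ``alternative'' you mention in your closing sentence. What you do instead is redo the classical argument by hand in the multigraded setting: the factorwise power rule yields the evaluation identity $\phi \circ g_j = c_{\ue}\,\phi(q_j)\,\bigl(\ell_{j,1}^{d_1-e_1}\ootimes \ell_{j,m}^{d_m-e_m}\bigr)$, from which (ii)$\Rightarrow$(i) is a direct computation, and (i)$\Rightarrow$(ii) follows from perfectness of the apolar pairing in multidegree $\ud$ together with the identification $U^{\perp} = (I_{\bbX})_{\ud}$. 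Both supporting facts you flag are indeed in order; the second is immediate once $I_{\bbX}$ is understood, as the paper intends, to be the full multihomogeneous vanishing ideal of the reduced set of points, so that its multidegree-$\ud$ piece is by definition the space of multidegree-$\ud$ forms vanishing on $\bbX$. Your approach buys a self-contained, elementary proof which makes transparent that only the single graded piece $(I_{\bbX})_{\ud}$ of hypothesis (i) is ever used, a feature the paper's citation hides. The paper's approach buys brevity and situates the statement in the more general toric framework; notably, the cited results also cover the non-reduced ($0$-dimensional scheme) version of the lemma, which the paper explicitly relies on afterwards for cactus ranks, and which your span-of-points duality argument does not immediately give without reworking the identification of $(I_{\bbX})_{\ud}$ with the appropriate dual space for a scheme.
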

Again, condition {\rm (ii)} can be stated as $[f] \in \langle \nu_{\ud}(\bbX) \rangle $,  and Lemma \ref{lemma: apolarity multihom} extends to the general case of $0$-dimensional schemes. We use the expression $\ud$-apolar in that setting as well.

It is easy to extend Apolarity Lemma also in the case of simultaneous rank considering sets of points, or more generally $0$-dimensional schemes, which are simultaneously apolar to a set of forms.

\begin{lemma}[Apolarity Lemma -- simultaneous version]\label{remark: simul apolar}
 Let $f_1 \vvirg f_s \in S^d V$. Let $\bbX = \{[\ell_1],\ldots,[\ell_r]\} \subseteq \bbP V$ be a set a points defined by the ideal $I _\bbX$. Then, the following are equivalent:
\begin{enumerate}[{\rm (i)}]
	\item $I_\bbX \subseteq \bigcap_{i=1}^s \Ann_d (f_i) $;
	\item $f_j = \sum_{i=1}^r \lambda_{j,i} \ell_{i}^{d}$, for some $\lambda_{j,i} \in \Bbbk$, for any $j = 1,\ldots,s$.
\end{enumerate}
\end{lemma}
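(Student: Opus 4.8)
The plan is to reduce the simultaneous statement to the classical Apolarity Lemma (Lemma \ref{lemma: apolarity homog}) applied separately to each form $f_j$. The only genuinely new content is the bookkeeping ensuring that a single set of points $\bbX$ — equivalently a single collection of linear forms $\ell_1 \vvirg \ell_r$ — serves all $s$ decompositions simultaneously.

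First I would unwind condition (i). By the definition of the intersection of ideals, the containment $I_\bbX \subseteq \bigcap_{i=1}^s \Ann_d(f_i)$ holds if and only if $I_\bbX \subseteq \Ann_d(f_j)$ for every $j = 1 \vvirg s$. This converts one statement about an intersection into $s$ separate containments, each of which is exactly the hypothesis of the classical Apolarity Lemma for the single form $f_j$.

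Then I would apply Lemma \ref{lemma: apolarity homog} to each $f_j$ in turn. For a fixed $j$, that lemma gives $I_\bbX \subseteq \Ann_d(f_j)$ if and only if $f_j = \sum_{i=1}^r \lambda_{j,i}\ell_i^d$ for some scalars $\lambda_{j,i} \in \Bbbk$. Conjoining these equivalences over all $j$ yields precisely condition (ii), with the same linear forms $\ell_1 \vvirg \ell_r$ throughout, since these are the ones whose classes $[\ell_i]$ constitute the fixed set $\bbX$ defined by $I_\bbX$ and hence do not depend on $j$.

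The point requiring explicit attention — more a remark than an obstacle — is exactly this last observation: the index $i$ must range over the same set of points for every $j$. This is automatic here because $\bbX$, and therefore $I_\bbX$ and the forms $\ell_i$, are fixed from the start, and the classical Apolarity Lemma produces a decomposition supported precisely on the points of $\bbX$. As with the classical and multigraded versions, condition (ii) can equivalently be phrased as $[f_j] \in \langle \nu_d(\bbX) \rangle$ for all $j$, that is $\langle f_1 \vvirg f_s \rangle \subseteq \langle \nu_d(\bbX) \rangle$, and the argument extends verbatim to the case where $\bbX$ is a $0$-dimensional scheme.
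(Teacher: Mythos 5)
Your proof is correct and matches exactly what the paper intends: the paper states this lemma without proof, remarking only that it is an easy extension of the classical Apolarity Lemma, and your reduction---splitting the containment in the intersection $\bigcap_{j} \Ann_d(f_j)$ into $s$ separate containments and applying Lemma \ref{lemma: apolarity homog} to each $f_j$ with the fixed set $\bbX$---is precisely that intended argument. Your closing remarks on the reformulation $\langle f_1 \vvirg f_s \rangle \subseteq \langle \nu_d(\bbX) \rangle$ and the extension to $0$-dimensional schemes also agree with the paper's own comments following the statement.
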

Again, condition (ii) can be stated as $\langle f_1 \vvirg f_s \rangle \subseteq \langle \nu_d ( \bbX ) \rangle$, and Lemma \ref{remark: simul apolar} extends to the case of $0$-dimensional schemes, possibly non reduced. Moreover, Lemma \ref{remark: simul apolar} extends to the case of forms of different degrees.

If $\calF \subseteq S^d V$, $\Ann_{d}(\calF) := \bigcap_{f \in \calF} \Ann_d(f)$ is called the simultaneous apolar ideal of $\calF$. Proposition \ref{prop: apolar gradient} will provide a characterization of $\Ann_{d-k}(\nabla^k f)$ for every $f \in S^d V$ and every $k$. This will be a fundamental tool for the rest of the paper.

\subsection{Hilbert functions of $0$-dimensional schemes}\label{subsec: hilbert func}
Given a homogeneous ideal $I \subseteq S^\bullet~V^*$, the ideal $I$ and the quotient algebra $A_I := {S^\bullet V^*}/{I}$ inherit the grading of the polynomial ring. The {\it Hilbert function} of the quotient algebra $A_I$ is the function which sends an integer $i$ to the dimension, as a $\Bbbk$-vector space, of the component of degree $i$ of $A_I$, i.e.,
\begin{equation}\label{eq: HF}
\HF(A_I; i) := \dim_\Bbbk (A_I)_i = \dim_\Bbbk S^iV^* - \dim_\Bbbk I_i. 
\end{equation}
If $I = I_\bbX$ is the defining ideal of a $0$-dimensional scheme $\bbX \subseteq \bbP V$, $\HF_\bbX := \HF(A_{I_\bbX}; -)$ denotes the Hilbert function of the corresponding graded algebra. 

{The Hilbert function of a $0$-dimensional scheme $\bbX$ is strictly increasing until it reaches a constant value; the value of this constant is the \emph{degree} of $\bbX$, denoted $\deg(\bbX)$. See \cite[Theorem 1.69]{IarrKan:PowerSumsBook}. In the case where $\bbX$ consists of $s$ simple points, then $\deg(\bbX) = s$. We refer to \cite{EisHar:GeometrySchemes} for the theory of $0$-dimensional schemes.}

\begin{lemma}\label{lemma: basics HF}
Let $I_\bbY \subseteq S^\bullet V^*$ be an ideal defining a $0$-dimensional scheme $\bbY \subseteq \bbP V$. Let $H = \{\ell = 0\}\subseteq\bbP V$ be a hyperplane such that $\bbY \cap H = \emptyset$. Then
\[
\deg(\bbY) = \sum_{i \geq 0} \HF(A_{I_{\bbY}+(\ell)};i).
\]
\end{lemma}
\begin{proof}
By assumption, $\ell$ is a non-zero divisor in $A_{I_\bbY}$. Therefore, for every $i$, multiplication by $\ell$ induces the exact sequence
\begin{equation}\label{eqn: exact sequence}
	0 \longrightarrow \left(A_{I_{\bbY}}\right)_{i-1} \overset{\cdot \ell}{\longrightarrow} \left(A_{I_{\bbY}}\right)_{i} \longrightarrow \left(A_{I_{\bbY} + (\ell)}\right)_{i} \longrightarrow 0.
\end{equation}
This gives $\HF \left(A_{I_\bbY + (\ell)} , i \right) = \HF_{\bbY}(i) - \HF_\bbY(i-1)$, namely the Hilbert function of $A_{I_\bbY + (\ell)}$ is the {\it first difference} of the Hilbert function of ${\bbY}$. Hence, for $s \gg 0$, we have $ \deg(\bbY) = \HF_{\bbY}(s) = \sum_{i = 0}^s \big(\HF_{\bbY} (i) - \HF_{\bbY}(i-1)\big) $, where $\HF_{\bbY}(s)$ is written as a telescopic sum, using $(S^\bullet V^* / I_{\bbY})_j = 0$ if $j < 0$. We conclude
\[
	\deg(\bbY) = \sum_{i \geq 0}\HF(A_{I_{\bbY} + (\ell)};i).  
\]
\end{proof}
The latter results justify the following definition.
\begin{definition}\label{def: regularity}
Let $\bbY\subseteq \bbP V$ be a $0$-dimensional scheme. We call the first degree where the Hilbert function of $\bbY$ stabilizes the {\bf regularity index} of $\bbY$, denoted $\reg(\bbY)$, i.e., 
$${\reg}(\bbY) := \min\{i : \HF_\bbY(i) = \deg(\bbY)\}.$$
\end{definition}

\begin{remark}\label{rmk: regularity}
By \cite[Theorem 1.69]{IarrKan:PowerSumsBook}, the regularity index of a $0$-dimensional scheme is one less than the {\it Castelnuovo-Mumford regularity} of its defining ideal; for details, we refer to \cite[Section~20.5]{Eis:CommutativeAlgebra}. We recall that the Castelnuovo-Mumford regularity of an ideal bounds from above the maximal degree of any minimal set of generators of the ideal: in particular, 
\[
	\max\left\{\deg(f_i) : \begin{array}{c} \text{for any minimal set of generators} \\ I_\bbY = (f_1,\ldots,f_s)\end{array} \right\} \leq {\rm reg}(\bbY) + 1.
\]
\end{remark}

\begin{remark}\label{rmk: multigraded HF}
If $I \subseteq S^\bullet V^* \ootimes S^\bullet V^*$ is a multi-homogeneous ideal, then again it inherits the multi-grading of the ring and we define the {\it multigraded Hilbert function} of the corresponding quotient algebra analogously to \eqref{eq: HF} by considering any multi-degree. From \cite[Proposition~1.9]{SiVT06:MultigradedRegularity}, we observe the following. If $\bbY$ is a $0$-dimensional scheme in $\bbP V^{\times m}$, the multigraded Hilbert function of $\bbY$ is increasing and eventually constant in each direction, that is, for~any~$i \in \{1,\ldots,m\}$:
\begin{enumerate}[\rm (i)]
\item $\HF_\bbY(\ud) \leq \HF_\bbY(\ud + \ue_i)$, for any $\ud \in \bbN^m$;
\item $\HF_\bbY(\ud) = \HF_\bbY(\ud+\ue_i)$, then $\HF_\bbY(\ud) = \HF_\bbY(\ud+2\ue_i)$, for any $\ud \in \bbN^m$.
\end{enumerate}
Moreover, $\HF_\bbY(\ud) \leq \deg(\bbY)$, for any $\ud \in \bbN^m$, and equality holds if $d_i \gg 0$, for all $i$.
\end{remark}

\subsection{Cactus ranks}\label{subsec: cactus}

Considering arbitrary $0$-dimensional schemes suggests the definition of a more general notion of rank: the \textit{cactus rank}. This was introduced in \cite{IarrKan:PowerSumsBook} in the setting of homogeneous polynomials with the name of \emph{scheme length}. The terminology cactus rank, which is now the one commonly used in the literature, was introduced in \cite{RanSch11:RankForm, BerRan:CactusRankCubicForm,BuczBucz:SecantVarsHighDegVeroneseReembeddingsCataMatAndGorSchemes}.

\begin{definition}\label{defin: cactus rank}
 Let $X \subseteq \bbP^N$ be a non-degenerate projective variety and let $p \in \bbP^N$. The $X$-{\bf cactus rank} of $p$ is 
 \[
\cactusR_X(p) := \min \left\{ r : \begin{array}{ll}
                                      \text{there exists a $0$-dimensional scheme $\bbY \subseteq X$ } \\
                                      \text{with $p \in \langle \bbY \rangle$ and $\deg(\bbY) =r$}
                                    \end{array} \right\} .  
 \]
\end{definition}

By Apolarity Lemma, we can characterize the cactus rank with respect to Segre-Veronese varieties as follows. Let $f \in S^\ud V$. Then the cactus rank of $[f]$ with respect to the $\ud$-th Segre-Veronese variety is
\[
 \cactusR_\ud (f) = \min \left\{ r : \begin{array}{ll}
                                      \text{there exists a $0$-dimensional scheme $\bbX\subseteq\bbP V^{\times m}$} \\
                                      \text{$\ud$-apolar to $f$ with $\deg(\bbX) = r$}
                                    \end{array} \right\} .
\]

The analog of Lemma \ref{lemma: d geq d'}, with the same proof, holds for cactus rank as well.
\begin{lemma}\label{lemma: cactus d geq d'}
Let $f\in S^d V$ and $\ud \succeq \ud'$. Then $\cactusR_\ud(f)\geq \cactusR_{\ud'}(f)$. 
\end{lemma}

Also simultaneous rank has a corresponding cactus version.

\begin{definition}\label{defin: simul cactus}
Let $X \subseteq \bbP^N$ be a non-degenerate projective variety and let $\calF \subseteq \bbP ^N$. The {\bf simultaneous $X$-cactus rank} of $\calF$, denoted $\cactusR_X(\calF)$, is the minimum $r$ such that there exist $\bbY \subseteq X$ with $\deg(\bbY) = r$ and $\calF \subseteq \langle \bbY \rangle$, or equivalently $\langle \calF \rangle \subseteq \langle \bbY \rangle$.
\end{definition}

However, a cactus analog of Lemma \ref{lemma: simul as segre-veronese} fails, as shown in Example \ref{example: non simul SV for cactus}. 

As in the case of simultaneous rank, we are interested in relations between $\cR_\ud(f)$ and the simultaneous rank of partial derivatives of $f$.
\begin{definition}
Let $f \in S^d V$ and let $k < d$ be a positive integer. The {\bf $k$-th gradient cactus rank} of $f$ is the simultaneous cactus rank of $\nabla^k f$ with respect to the $(d-k)$-th Veronese variety; write 
		$$
			\gradc{k}(f) := \cR_{\nu_{d-k}\bbP V}(\nabla^k f).
		$$
\end{definition}

The following result gives a partial analog of Lemma \ref{lemma: simul as segre-veronese} in the case of cactus rank. 

\begin{lemma}\label{lemma: cactus simul less than segre-veronese}
Let $X \subseteq \bbP W$, $\calF = \{ p_1 \vvirg p_s \} \subseteq \bbP W$, and fix $w_1 \vvirg w_s \in W$ such that $p _i = [w_i] \in \bbP W$. Let $a_1 \vvirg a_s$ be a basis of an $s$-dimensional vector space $A$ and consider $t = \textsum_{i=1}^s a_i \otimes w_i  \in  A \otimes W$. Then
\[
 \cR_X(\calF) \leq \cR_{\nu_{1,1}(\bbP A \times X)} ([t]).
\]
\end{lemma}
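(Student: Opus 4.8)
The plan is to mimic one direction of the proof of Lemma~\ref{lemma: simul as segre-veronese}, namely the direction that produces a simultaneous scheme on $X$ from a scheme on the Segre variety $\nu_{1,1}(\bbP A \times X)$. The key point is that the argument used there to pass from a decomposition of $[t]$ to a simultaneous decomposition of $\calF$ only relied on linear-algebraic manipulations (applying the dual basis functionals $b_k$) and never used that the points $q_j$ were reduced. Since a $0$-dimensional scheme $\bbY \subseteq \nu_{1,1}(\bbP A \times X)$ with $[t] \in \langle \bbY \rangle$ spans a linear space $\langle \bbY \rangle \subseteq A \otimes W$, and the span is by definition the zero locus of the linear forms in $I_\bbY$, the reasoning goes through at the level of linear spans.

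First I would take a $0$-dimensional scheme $\bbY \subseteq \nu_{1,1}(\bbP A \times X)$ with $\deg(\bbY) = \cR_{\nu_{1,1}(\bbP A \times X)}([t])$ and $[t] \in \langle \bbY \rangle$. The Segre variety $\nu_{1,1}(\bbP A \times X)$ sits inside $\bbP(A \otimes W)$, and its linear span is governed by the factor $X$: concretely, I would consider the image $\bbZ \subseteq X$ of $\bbY$ under the projection $\bbP A \times X \to X$, or more carefully the subscheme of $X$ cut out by the relevant component of $I_\bbY$. The goal is to produce a $0$-dimensional scheme $\bbZ \subseteq X$ with $\deg(\bbZ) \leq \deg(\bbY)$ and $\calF \subseteq \langle \bbZ \rangle$. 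Because $[t] = \sum_{i} a_i \otimes w_i \in \langle \bbY \rangle$, applying the dual functionals $b_k \in A^*$ (extended as $b_k \otimes \id_W$ on $A \otimes W$) to the inclusion $\langle \bbY \rangle$ shows that each $w_k$ lies in the image of $\langle \bbY \rangle$ under $b_k \otimes \id_W$, which is contained in $\langle \bbZ \rangle$ where $\bbZ$ is the scheme on $X$ obtained by pushing $\bbY$ forward. This gives $\calF = \{[w_1] \vvirg [w_s]\} \subseteq \langle \bbZ \rangle$, hence $\cR_X(\calF) \leq \deg(\bbZ) \leq \deg(\bbY)$.

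The main obstacle I anticipate is making precise the passage from $\bbY$ to a scheme $\bbZ$ on $X$ with $\deg(\bbZ) \leq \deg(\bbY)$; this is the step where one must be careful that taking the projection (or the appropriate saturation/component of $I_\bbY$ with respect to the $A$-factor) does not increase the degree. For reduced points this is transparent, but for non-reduced schemes one must verify that the projection to $X$ yields a $0$-dimensional scheme whose degree does not exceed that of $\bbY$, since a scheme can in principle map with smaller degree but never larger once we realize it via the linear projection on spans. The cleanest way to handle this is to argue entirely at the level of linear spans: since $\langle \nu_{1,1}(\bbP A \times X)\rangle$-membership of $[t]$ forces, via the functionals $b_k$, that $\langle \calF \rangle \subseteq (b_1 \otimes \id_W)(\langle \bbY \rangle) + \cdots + (b_s \otimes \id_W)(\langle \bbY \rangle)$, and each summand is contained in the span of the image scheme on $X$, one obtains a scheme on $X$ of degree at most $\deg(\bbY)$ whose span contains $\calF$. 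Note that this argument is genuinely one-directional: the reverse inequality fails precisely because the converse construction (Lemma~\ref{lemma: simul as segre-veronese}) builds a scheme on $\nu_{1,1}(\bbP A \times X)$ from reduced points and does not have a scheme-theoretic analog, as indicated by Example~\ref{example: non simul SV for cactus}.
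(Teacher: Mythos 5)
Your proposal is correct and follows essentially the same route as the paper's proof: take a minimal $0$-dimensional scheme $\bbX \subseteq \bbP A \times X$ with $[t] \in \langle \nu_{1,1}(\bbX)\rangle$, replace it by its scheme-theoretic image $\bbY = \pi(\bbX) \subseteq X$ under the projection (whose degree is at most $\deg(\bbX)$), and recover each $w_k$ by contracting with the dual basis of $A^*$. The one containment you assert without justification --- that contracting the span $\langle \nu_{1,1}(\bbX)\rangle$ by $b_k \otimes \id_W$ lands in the cone $E$ over $\langle \bbY \rangle$ --- is exactly the step the paper makes explicit, and it is the heart of the scheme-theoretic argument: $\bbX \subseteq \pi^{-1}(\bbY) \subseteq \bbP A \times \bbP E$, hence $\langle \nu_{1,1}(\bbX)\rangle \subseteq \langle \nu_{1,1}(\bbP A \times \bbP E)\rangle = \bbP(A \otimes E)$, and $(b_k \otimes \id_W)(A \otimes E) = E$.
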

\begin{proof}
 Let $\pi: \bbP A \times \bbP W \to \bbP W$ be the projection onto the second factor. Let $\bbX \subseteq \bbP A \times X$ be a $0$-dimensional scheme such that $[t] \in \langle \nu_{1,1}(\bbX) \rangle \subseteq \bbP (A\otimes W)$. Let $\bbY = \pi(\bbX) \subseteq X$. Then $\deg(\bbY) \leq \deg(\bbX) = \deg(\nu_{1,1}(\bbX))$. We will show $\calF \subseteq \langle \bbY \rangle$.

It suffices to show that $t \in A \otimes E$, where $E$ is defined by $\bbP E := \langle \bbY \rangle \subseteq \bbP W$. Indeed, $t \in A \otimes E$ implies that the image of the linear map $t : A^* \to W$ is contained in $E$, namely $\langle w_1 \vvirg w_s \rangle \subseteq E$. In particular $\calF \subseteq \bbP E$.

We have $\bbX \subseteq \pi^{-1}(\bbY) \subseteq \pi^{-1}(\bbP E) = \bbP A \times \bbP E$. Applying $\nu_{1,1}$ and passing to the linear spans, $\langle \nu_{1,1}(\bbX) \rangle \subseteq \langle \nu_{1,1}(\bbP A \times \bbP E) \rangle = \bbP( A \otimes E)$. Since $[t] \in \langle \nu_{1,1}(\bbX) \rangle$ we obtain $t \in A \otimes E$ and we conclude.
\end{proof}

A consequence of Lemma \ref{lemma: cactus simul less than segre-veronese} is the cactus analog of Corollary \ref{corol: gradient rank as segre-veronese}. In particular, Example \ref{example: non simul SV for cactus} shows that the analog of \eqref{eqn: corol gradient rank equality} does not hold for cactus rank.

\begin{corollary}\label{corol: cactus gradient rank as segre-veronese}
 Let $f\in S^d V$. Then, for every $\ud \partinto[m]{d}$ with $d_m = d-k$, we have
 \begin{equation*} 
 \cR_{\ud} (f) \geq \gradc{k}(f).
\end{equation*}
\end{corollary}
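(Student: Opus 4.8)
The plan is to mirror the argument used to prove Corollary \ref{corol: gradient rank as segre-veronese}, but replacing Lemma \ref{lemma: simul as segre-veronese} with its cactus counterpart, Lemma \ref{lemma: cactus simul less than segre-veronese}. As in the non-cactus case, the key is to realize the gradient cactus rank of $f$ as the cactus $X$-rank of a single point on a Segre product, and then to exploit an inclusion of varieties which sends apolar $0$-dimensional schemes for $[f]$ with respect to the finer Segre-Veronese variety into apolar schemes for the collection $\nabla^k f$.

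First I would set $\udelta = (d_1 \vvirg d_{m-1})$, so that $\udelta \partinto k$, and regard $f \in S^\ud V$ via the polarization \eqref{eqn: polarization}. Applying Lemma \ref{lemma: cactus simul less than segre-veronese} with $X = \nu_{d-k}(\bbP V)$, $\calF = \nabla^k f$, and $A \simeq S^\udelta V$ via the isomorphism described in the proof of Corollary \ref{corol: gradient rank as segre-veronese} (so that the associated tensor $t$ coincides with $f$ as an element of $S^\ud V$), I obtain
\[
\gradc{k}(f) = \cR_{\nu_{d-k}(\bbP V)}(\nabla^k f) \leq \cR_{\nu_{1,1}(\bbP S^\udelta V \times \nu_{d-k}(\bbP V))}([f]).
\]
Then I would invoke the chain of inclusions \eqref{eqn: inclusions in lemma grad simul}, namely
\[
\nu_\ud(\bbP V^{\times m}) \subseteq \nu_{1,d-k}(\bbP S^\udelta V \times \bbP V) = \nu_{1,1}(\bbP S^\udelta V \times \nu_{d-k}(\bbP V)),
\]
which shows that any $0$-dimensional scheme $\ud$-apolar to $f$ yields a $0$-dimensional scheme on the coarser Segre product whose span contains $[f]$ and whose degree is unchanged; hence $\cR_{\nu_{1,1}(\bbP S^\udelta V \times \nu_{d-k}(\bbP V))}([f]) \leq \cR_\ud(f)$. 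Combining the two inequalities gives $\gradc{k}(f) \leq \cR_\ud(f)$, as desired.

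The conceptual subtlety, compared to the non-cactus situation, is that Lemma \ref{lemma: cactus simul less than segre-veronese} provides only an inequality rather than an equality: one direction of the correspondence in Lemma \ref{lemma: simul as segre-veronese} fails for arbitrary $0$-dimensional schemes, since the projection of a scheme apolar to $[t]$ on the Segre product need not be recoverable by lifting back a simultaneously apolar scheme for $\calF$ (this is precisely the content of Example \ref{example: non simul SV for cactus}). Fortunately, the direction that survives is exactly the one needed here: passing from a scheme on the larger Segre product to a simultaneously apolar scheme on $X$ of no larger degree. The main thing to verify carefully is therefore that the inclusion of varieties interacts correctly with degrees of $0$-dimensional schemes under the Segre-Veronese reembedding, i.e.\ that reembedding a fixed scheme via a morphism of projective varieties preserves its degree and its span; this is routine but is where all the bookkeeping lives, so I expect that step to be the only genuine obstacle, and a minor one.
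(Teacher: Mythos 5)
Your proposal is correct and is essentially the paper's own proof: the paper proves Corollary \ref{corol: cactus gradient rank as segre-veronese} by repeating the argument of Corollary \ref{corol: gradient rank as segre-veronese} with Lemma \ref{lemma: cactus simul less than segre-veronese} in place of Lemma \ref{lemma: simul as segre-veronese}, exactly as you do, and you rightly note that the one-sided inequality of the cactus lemma points in the direction needed. The final "bookkeeping" you worry about is even easier than you suggest, since both varieties in \eqref{eqn: inclusions in lemma grad simul} sit in the same ambient space $\bbP S^\ud V$, so a $\ud$-apolar scheme is literally the same scheme on the coarser variety, with degree and span untouched.
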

\begin{proof}
This follows by the same argument of Corollary  \ref{corol: gradient rank as segre-veronese}, using Lemma \ref{lemma: cactus simul less than segre-veronese} instead of Lemma \ref{lemma: simul as segre-veronese}.
\end{proof}

The results of  Lemma \ref{remark: simul apolar}, Proposition \ref{prop: apolar gradient} and Corollary \ref{corol: cactus gradient rank as segre-veronese} provide the following chain of inequalities, which is the cactus version of \eqref{eq: intro}. For every $\ud \partinto{d}$ with $d_m = d-k$,
\begin{equation}\label{ineq: cactus ranks}
\cR_d(f) \geq \cR_\ud(f) \geq \gradc{k}(f). 
\end{equation}

We conclude this section by providing some insights on the relations between the ranks of the catalecticant maps, and more generalized flattening maps, and the (partially symmetric) rank of a form $f$. See also \cite[Example 4.7]{BGL:DeterminantalEquationsEKSConj}

\begin{remark}\label{rmk: flattenings}
By the the Apolarity Lemma (Lemma \ref{lemma: apolarity homog}), 
\[
 \HF_\bbX (i) \geq \HF(A_{\Ann_d(f)};i) = \rank (\cat_i(f)), \quad \text{for any } i \in \bbN, 
\]
for every $f \in S^d V$, and every $\bbX$ apolar to $f$. In particular 
\[
\rmR_d(f) \geq \cR_d(f) \geq \max_{i = 0,\ldots,d} \{\HF(A_{\Ann_d(f)};i)\}. 
\]
The maximal value of the Hilbert function of the quotient algebra of $\Ann_d(f)$ is sometimes referred to as \emph{catalecticant lower bound} for $\cR_d(f)$. Similar inequalities hold for the partially symmetric rank by considering the {\it multigraded} Hilbert function.

We observe that catalecticant lower bounds hold for $\cR_{(1^d)}(f)$, where $(1^d) = (1,\ldots,1)$. More precisely, for every $t \in {V_1\ootimes V_d}$ and every subset $I \subseteq \{ 1 \vvirg d\}$, there is an induced linear map, called \emph{flattening map}, 
\[
\flat_I(t) : \bigotimes_{i \in I} V_i^* \to \bigotimes_{i \in I^c} V_i,\quad \text{ where } I^c = \{1,\ldots,d\} \smallsetminus I, 
\]
defined by contraction; see, e.g., \cite[Chapter 2]{Lan:TensorBook}. Its rank is the value of the Hilbert function of $A_{\Ann_{1^d}(t)}$ in multi-degree $\underline{e}_i$; notice that $A_{\Ann_{(1^d)}(t)}$ is a quotient of the ring $S^\bullet V_1 \ootimes S^\bullet V_d = S^\bullet (V_1 \ooplus V_d)$. This rank is a lower bound for $\cR_{1^d}(t)$. Now, if $f \in S^d V$, then one has that $\rank (\cat_i(f)) = \rank (\flat_I(f))$, for every $I$ with $\vert I \vert =i$. In conclusion, the catalecticant lower bound is indeed a lower bound for $\cR_{1^d}(f)$. 

More generally, some generalized flattening maps for $f \in S^d V$, naturally providing lower bounds for $\rmR_d(f)$, give lower bounds for $\rmR_{1^d}(f)$ and, by \cite{Galazka:VectorBundlesGiveEqnsForCactus}, for $\cR_{1^d}(f)$ as well. We observe this fact for Koszul flattenings  \cite{LanOtt:EqnsSecantVarsVeroneseandOthers}: given $f \in S^d V$, define $\flatKos_i^{\wedge p} (f):  S^i V^* \otimes \Lambda^p V  \to S^{d-i-1} \otimes \Lambda^{p+1} V$ to be the composition
\[
 S^i V^* \otimes \Lambda^p V \xrightarrow{\cat_i(f) \otimes \id_{\Lambda^p V}} S^{d-i} V \otimes \Lambda^p V \longrightarrow S^{d-i-1}V \otimes \Lambda^{p+1} V,
\]
where the second map is the Koszul differential. From \cite[Proposition 4.1.1]{LanOtt:EqnsSecantVarsVeroneseandOthers}, $$\rmR_{d} (f) \geq \frac{\rank (\flatKos_i^{\wedge p} (f))  }{ \rank (\flatKos_i^{\wedge p}(x_0^{\otimes d}) )} =  \frac{\rank (\flatKos_i^{\wedge p} (f))  }{\binom{n}{p-1}},$$ if $\dim V = n+1$. In the non-symmetric setting, for $t \in V^{\otimes d}$, one defines a Koszul flattening in a similar way, as an \emph{augmentation} of $\flat_I$:
\[
 \flatKos_I^{j,\wedge p } (t) : \textbigotimes_I V^* \otimes \Lambda^p V \xrightarrow{\flat_I \otimes \id_{\Lambda^p V}} \textbigotimes_{I^c} V \otimes \Lambda^p V \longrightarrow \textbigotimes_{I^c \setminus \{ j\} } V \otimes \Lambda ^{p+1} V.
\]
This provides the lower bound $$\rmR_{1^d} (t) \geq \frac{\rank (\flatKos_I^{j,\wedge p} (t))  }{ \rank (\flatKos_I^{j,\wedge p}(x_0^{\otimes d}))} =  \frac{\rank (\flatKos_I^{j,\wedge p} (t))  }{\binom{n}{p-1}}.$$ Analogously to the standard flattening case, for $f \in S^d V$, one obtains that $$\rank (\flatKos_I^{j,\wedge p} (f)) = \rank (\flatKos_i^{\wedge p} (f)),$$ for every set of indices $I$ with $\vert I \vert = i$; therefore the Koszul flattening lower bound for $f$ holds for $\rmR_{1^d}(f)$. By the results of \cite{Galazka:VectorBundlesGiveEqnsForCactus}, these bounds hold for cactus rank as well. 
\end{remark}

\subsection{Consequences of apolarity theory}
In this section, we provide some immediate consequences of the theory introduced in Section \ref{subsec: ranks alg vars} and Section \ref{subsec: hilbert func}. The main result of this section is Proposition \ref{prop: apolar gradient} which gives an explicit description of the simultaneous apolar ideal of the set of partial derivatives of a given order of a form $f$.

\subsubsection{Failure of {Lemma \ref{lemma: simul as segre-veronese}} for cactus rank}
First, we provide an example showing that the simultaneous cactus rank of a family of forms cannot be read as the cactus rank of tensor in an bigger space unlike what happens for the classical rank in Lemma \ref{lemma: simul as segre-veronese}.

\begin{example}\label{example: non simul SV for cactus}
Consider $\calF = \{x_0^2 x_1 , x_0^2 x_2 \} \subseteq S^3 V$ with $\dim V = 3$. The apolar ideal is $\Ann_3(\calF) = (y_0^3,y_1^2,y_2^2,y_1y_2)$ whose Hilbert function in degree $2$ is equal to $3$ and, therefore, any $0$-dimensional scheme apolar to $\calF$ has length at least $3$. Indeed, $\cR_3(\calF) = 3$ since the \textit{$2$-fat point} supported at $[x_0] \in \bbP V$, i.e., the $0$-dimensional scheme of degree $3$ defined by $(y_1,y_2)^2$, is apolar to $\calF$.

Now, consider the partially symmetric tensor $t = a_0 \otimes  x_0^2 x_1 + a_1 \otimes  x_0^2 x_2  \in A \otimes S^3 V$ with $\dim A = 2$, $A = \langle a_0,a_1\rangle$. We prove that $\cR_{(1,3)}(t) \geq 4$. The bi-graded apolar ideal of $t$ is 
\[
\Ann _{(1,3)} (t) = (S^2 A^*) + (b_0y_2,b_1y_1,b_0y_1-b_1y_2,y_2^2,y_1y_2,y_1^2,y_0^3) \subseteq S^\bullet A^* \otimes S^\bullet V^*,
\]
where $\{b_0,b_1\}$ and $\{y_0,y_1,y_2\}$ are the bases of $A^*$ and $V^*$ dual to $\{a_0,a_1\}$ and $\{x_0,x_1,x_2\}$, respectively. The bi-graded Hilbert function of the quotient algebra $A_{\Ann_{(1,3)}(t)}$ is
\[
\begin{array}{c|ccccc}
\bsfrac{V^*}{A^*} & ${\footnotesize{0}}$ & ${\footnotesize{1}}$ & ${\footnotesize{2}}$ & ${\footnotesize{3}}$ & ${\footnotesize{4}}$ \\ \midrule 
${\footnotesize{0}}$ & 1 & 3 & 3 & 2 & - \\
${\footnotesize{1}}$ & 2 & 3 & 3 & 1 & - \\
${\footnotesize{2}}$ & - & - & - & - & - \\
 \end{array} 
\]
Therefore, $\cR_{(1,3)}(t) \geq 3$. Assume that there exists a $0$-dimensional scheme $\bbY \subseteq \bbP A \times \bbP V$ of length $3$ apolar to $t$. Then, 
\[
(I_\bbY)_{(1,2)} = \left( (\Ann_{(1,3)}(t))_{(1,2)}\right) = (b_0y_2,b_1y_1,b_0y_1-b_1y_2,y_2^2,y_1y_2,y_1^2) =: J .
\]
In particular, $I_\bbY \supseteq J$. Since $I_\bbY$ is the ideal of a $0$-dimensional scheme in $\bbP A \times \bbP V$, it is saturated with respect to the bigraded irrelevant ideal $\frakm = (b_0,b_1) \cdot (y_0,y_1,y_2)$. Let $J^{sat} = J : \frakm^\infty$ be the saturation of $J$. We deduce $I_\bbY \supseteq J^{\it sat} = (y_1,y_2)$. This shows $\bbY \subseteq \bbP A \times (1:0:0)$. This is a contradiction because $\nu_{(1,3)}(\bbY) \subseteq \nu_{(1,3)}(\bbP A \times (1:0:0))$ and the latter is a line not containing the tensor $t$.
\end{example}

\subsubsection{Structure of simultaneous apolar ideal}
The next result computes the simultaneous apolar ideal of $\nabla^k f$ for a given $f \in S^d V$ which, via apolarity theory, will be of key importance for our computations.

\begin{proposition}\label{prop: apolar gradient}
Let $f \in S^d V$ and let $k \geq 0$. For every $i \geq 0$,
\[
\left(\Ann_{d-k} (  \nabla^k f)\right)_i = \bigcap_{\vert \alpha \vert = k} \left(\Ann_{d-k} \left(\textfrac{\partial^k}{\partial \bfx^\alpha} f \right)\right)_i = \left\{ \begin{array}{ll}
                                         (\Ann_{d} ( f))_i & \text{if $0 \leq i \leq d-k$}, \\
                                         S^i V^* & \text{if $i \geq d-k+1$}.
                                        \end{array}
\right.
\]
\end{proposition}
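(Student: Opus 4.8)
The first equality is immediate from the definition of the simultaneous apolar ideal, $\Ann_{d-k}(\nabla^k f) = \bigcap_{g \in \nabla^k f}\Ann_{d-k}(g)$, once we record that $\nabla^k f = \{\bfy^\alpha \circ f : |\alpha| = k\}$: under the identification of $y_j$ with $\partial/\partial x_j$, the monomial $\bfy^\alpha$ acts on $f$ precisely as $\textfrac{\partial^k}{\partial \bfx^\alpha}$. The plan for the second equality rests on the associativity of the apolar action: for $\phi \in S^i V^*$ and $|\alpha| = k$ one has $\phi \circ (\bfy^\alpha \circ f) = (\phi\,\bfy^\alpha) \circ f$, since composing two constant-coefficient differential operators corresponds to multiplying them in $S^\bullet V^*$. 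Consequently, $\phi$ lies in $(\Ann_{d-k}(\nabla^k f))_i$ if and only if $\phi\,\bfy^\alpha \in \Ann_d(f)$ for every $\alpha$ with $|\alpha| = k$, and I would prove the statement by analyzing this condition in the two ranges of $i$.

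For $i \geq d-k+1$, the product $\phi\,\bfy^\alpha$ has degree $i + k \geq d+1$; since $\Ann_d(f)$ is Artinian with socle degree $d$, one has $(\Ann_d(f))_{i+k} = S^{i+k}V^*$, so the membership $\phi\,\bfy^\alpha \in \Ann_d(f)$ holds automatically. Hence every $\phi \in S^i V^*$ qualifies and $(\Ann_{d-k}(\nabla^k f))_i = S^i V^*$, which is the second line. For $i \leq d-k$, the inclusion $(\Ann_d(f))_i \subseteq (\Ann_{d-k}(\nabla^k f))_i$ is the easy half: if $\phi \circ f = 0$, then $(\phi\,\bfy^\alpha)\circ f = \bfy^\alpha \circ (\phi \circ f) = 0$ for all $\alpha$, so $\phi$ annihilates all of $\nabla^k f$.

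The reverse inclusion for $i \leq d-k$ is where the real content lies. The plan is to assume $\phi \in S^i V^*$ satisfies $(\phi\,\bfy^\alpha)\circ f = 0$ for all $|\alpha| = k$, set $g := \phi \circ f \in S^{d-i}V$, and deduce $g = 0$. Rewriting the hypothesis as $\bfy^\alpha \circ g = 0$ for every $\alpha$ with $|\alpha| = k$, and using that the monomials $\bfy^\alpha$ span $S^k V^*$, I would first obtain $\psi \circ g = 0$ for all $\psi \in S^k V^*$. Since every monomial of degree $j \geq k$ factors through a degree-$k$ monomial, associativity upgrades this to $\psi \circ g = 0$ for all $\psi$ of degree $\geq k$; here the hypothesis $i \leq d-k$ is used to guarantee $\deg g = d - i \geq k$, so that this applies in degree $d-i$ itself. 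Finally, the pairing $S^{d-i}V^* \times S^{d-i}V \to \Bbbk$ given by $(\psi, g) \mapsto \psi \circ g$ is non-degenerate, because on monomials $\bfy^\beta \circ \bfx^\gamma = \beta!\,\delta_{\beta\gamma}$ in equal degrees and $\beta! \neq 0$ in characteristic zero; hence $\psi \circ g = 0$ for all $\psi \in S^{d-i}V^*$ forces $g = 0$, that is $\phi \in \Ann_d(f)$.

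I expect this last step — converting the vanishing of all order-$k$ derivatives of $g$ into the vanishing of $g$ itself — to be the main (if mild) obstacle. It is resolved precisely by the perfect duality of the apolar pairing in top degree, combined with the inequality $i \leq d-k$ that ensures $\deg g \geq k$ so that the chain of factorizations reaches the top degree of $g$; were $i$ larger, $g$ could be a nonzero form of degree below $k$, which is exactly the regime producing the second line $S^i V^*$.
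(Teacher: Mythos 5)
Your proof is correct and follows essentially the same route as the paper's: the commutativity/associativity identity $\phi \circ (\bfy^\alpha \circ f) = \bfy^\alpha \circ (\phi \circ f)$, the socle-degree argument for $i \geq d-k+1$, and non-degeneracy of the apolar pairing for the reverse inclusion when $i \leq d-k$. If anything, your write-up is slightly more careful than the paper's in the final step, where you make explicit that $\deg(\phi \circ f) = d-i \geq k$ and that the vanishing of all order-$k$ derivatives must be propagated up to degree $d-i$ before invoking non-degeneracy.
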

\begin{proof}
For $i \geq d-k+1$ the statement follows simply because $k$-th partial derivatives of $f$ have degree $d-k$.

For $i \leq d-k$, the statement is a consequence of the fact that differential operators commutes. For every $\phi \in S^i V^*$, we have 
\begin{equation}\label{eqn: apolar gradient prop}
\phi \circ \textfrac{\partial^\alpha}  {\partial ^k \bfx} f = \phi \circ \left(\bfy^\alpha \circ f\right) =  \bfy^\alpha \circ (\phi \circ  f).
\end{equation}
If $\phi \in (\Ann_{d} ( f))_i$, the right-hand-side of \eqref{eqn: apolar gradient prop} is $0$, showing that the left-hand-side is $0$ for every $\alpha$, and therefore $\phi \in (\Ann_{d-k} (  \nabla^k f))_i$. Conversely, if $\phi \in (\Ann_{d-k} (  \nabla^k f))_i$, then $\phi \in  \Ann_{d-k} \left({\textfrac{\partial^k}{\partial \bfx^\alpha}} f \right)$ for every $\alpha$; therefore the left-hand-side of \eqref{eqn: apolar gradient prop} is $0$, which implies that the right hand side is $0$; in this case we deduce that ${\phi \circ f}$ is a homogeneous polynomial of degree $k$ which is annihilated by all differential operators of order $k$. Since the apolarity pairing is non-degenerate, we conclude that $\phi \circ f = 0$. 
\end{proof}

An immediate consequence of Proposition \ref{prop: apolar gradient} is the following fact:

\begin{remark}\label{rmk: apolar to f iff apolar to nablaf}
Let $f \in S^d V$ and $k \geq 0$ as in Proposition \ref{prop: apolar gradient}. Let $\bbX \subseteq \bbP V$ be a $0$-dimensional scheme such that $I_\bbX$ is generated in degree at most $d-k$. Then $\bbX$ is apolar to $f$ if and only if it is apolar to $\nabla^k f$.
\end{remark}

 \subsubsection{Sylvester's Theorem for binary forms}\label{sec: Sylvester}
As a first explicit example of application of apolarity theory to compute ranks of homogeneous polynomials, we recall Sylvester's Theorem which completely describes the Waring decompositions in the case of binary forms \cite{Sylvester1852}.

Let $\dim V = 2$. One can prove that if $f \in S^d V$, then $\Ann_d(f) = (g_1 , g_2)$ where $\deg(g_i) = e_i$ and $e_1 + e_2 = d+2$; this is a consequence of the general theory, and more precisely of the fact that Gorenstein algebras of codimension $2$ are always complete intersection and that Artinian Gorenstein algebras have symmetric Hilbert function, i.e. for $i = 0,\ldots,d$, $\HF(A_{\Ann_d(f)};i) = \HF(A_{\Ann_d(f)};d-i)$; see \cite[Proposition 8.6]{Gera:InvSysFatPts}. Hence, let $e _1 \leq e_2$.

Recall that $0$-dimensional schemes in $\bbP^1$ are defined by principal ideals. Hence, if $g_1$ has distinct roots, we conclude that $\rk_{d}(f) = e_1$ and a minimal set of points apolar to $f$ is given by the roots of $g_1$; moreover, if $e_1 < e_2$, this is the unique minimal set of points apolar to $f$. If $g_1$ does not have distinct roots, then a minimal set of points apolar to $f$ is given by the roots of $g_1h+g_2$, for a generic choice of $h \in S^{e_2-e_1}V$. For an exposition of Sylvester's Theorem in modern terminology we refer to \cite{ComSei11:RankBinary}.

\begin{theorem}[Sylvester's Theorem]\label{thm: sylvester}
Let $f \in S^dV$ with $\dim_\Bbbk V = 2$. Let $\Ann_d(f) = (g_1,g_2)$ with $\deg(g_1) \leq \deg(g_2)$. Then
\[
	\rk_{d}(f) = 
	\begin{cases}
			\deg(g_1) & \text{ if $g_1$ has distinct roots}; \\
			\deg(g_2) & \text{ otherwise}.
		\end{cases}
\]
\end{theorem}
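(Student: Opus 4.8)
The plan is to exploit the apolarity machinery developed above, specialized to the binary case $\dim V = 2$, where everything becomes a statement about principal ideals in $\Bbbk[y_0,y_1]$. First I would invoke the structural facts recalled just before the statement: since $\Ann_d(f)$ is an Artinian Gorenstein ideal of codimension $2$, it is a complete intersection $\Ann_d(f) = (g_1, g_2)$ with $\deg(g_1) + \deg(g_2) = d+2$, and its Hilbert function is symmetric. Writing $e_1 = \deg(g_1) \leq e_2 = \deg(g_2)$, the symmetry forces $e_1 \leq \tfrac{d+2}{2} \leq e_2$. The whole argument then rests on the Apolarity Lemma (Lemma \ref{lemma: apolarity homog}): a set of points $\bbX = \{[\ell_1],\dots,[\ell_r]\} \subseteq \bbP V = \bbP^1$ is apolar to $f$ if and only if $I_\bbX \subseteq \Ann_d(f)$, and since $0$-dimensional subschemes of $\bbP^1$ are cut out by a single homogeneous form, $\rk_d(f)$ equals the least degree of a squarefree form lying in $\Ann_d(f)$.

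Next I would treat the two cases. If $g_1$ has distinct roots, then $g_1$ itself is a squarefree generator of degree $e_1$ sitting in $\Ann_d(f)$, so its $e_1$ roots give an apolar set of $e_1$ points, yielding $\rk_d(f) \leq e_1$. For the matching lower bound, I would argue that no nonzero form of degree smaller than $e_1$ lies in $\Ann_d(f)$: the smallest degree in which $\Ann_d(f)$ is nonzero is precisely $e_1$, since $g_1$ is a minimal generator and $(\Ann_d(f))_i = 0$ for $i < e_1$. Hence every apolar scheme, being cut out by a form in $\Ann_d(f)$, has degree at least $e_1$, giving $\rk_d(f) = e_1$. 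In the case where $g_1$ does \emph{not} have distinct roots, I would show $\rk_d(f) = e_2$ by producing a squarefree form of degree $e_2$ in $\Ann_d(f)$ while ruling out any squarefree form of lower degree. For existence, consider the pencil $g_1 h + g_2$ with $h$ ranging over $S^{e_2 - e_1} V$: these all lie in $\Ann_d(f)$, have degree $e_2$, and for generic $h$ the form $g_1 h + g_2$ is squarefree (its discriminant is a nonzero polynomial in the coefficients of $h$, since e.g.\ specializing appropriately produces a squarefree member). This gives an apolar set of $e_2$ reduced points, so $\rk_d(f) \leq e_2$.

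The lower bound in the degenerate case is the delicate point. I must show that no squarefree form of degree $r < e_2$ lies in $\Ann_d(f)$. Any form in $(\Ann_d(f))_r$ with $r < e_2$ must be a $\Bbbk$-multiple of $g_1$ times a form of degree $r - e_1$, because $g_1$ and $g_2$ generate a complete intersection and in degrees below $e_2$ the only contribution comes from multiples of $g_1$; concretely, $(\Ann_d(f))_r = g_1 \cdot S^{r-e_1}V$ for $e_1 \leq r < e_2$. Since $g_1$ has a repeated root by hypothesis, every such multiple $g_1 q$ also has that repeated root, hence is not squarefree. Therefore the minimal degree of a squarefree apolar form is exactly $e_2$, and $\rk_d(f) = e_2$.

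I expect the main obstacle to be making the claim $(\Ann_d(f))_r = g_1 \cdot S^{r-e_1} V$ for $r < e_2$ fully rigorous, since it is exactly the structure of the complete intersection $(g_1,g_2)$ that guarantees no "new" generator appears before degree $e_2$; this is where the Gorenstein/complete-intersection hypothesis does the real work, and it is what cleanly separates the two cases. The genericity argument for squarefreeness of $g_1 h + g_2$ is routine by comparison (a nonvanishing-discriminant argument), so the crux is really the clean degree-by-degree description of the complete intersection ideal $\Ann_d(f)$ in the binary setting.
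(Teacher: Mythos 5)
Your proposal is correct and follows essentially the same route as the paper's own (sketched) argument: the complete-intersection structure of $\Ann_d(f)=(g_1,g_2)$ with $\deg(g_1)+\deg(g_2)=d+2$, the principality of $0$-dimensional subschemes of $\bbP^1$ so that rank is the least degree of a squarefree form in the ideal, the roots of $g_1$ when it is squarefree, and the roots of a generic member $g_1h+g_2$ otherwise, with the degree-by-degree fact $(\Ann_d(f))_r=g_1\cdot S^{r-e_1}V^*$ for $r<e_2$ ruling out smaller squarefree forms. The one point to tighten is your genericity claim for $g_1h+g_2$: rather than asserting a nonzero discriminant by ``specializing appropriately'' (which is close to assuming what you want), observe that $g_1$ and $g_2$ have no common roots because $\Ann_d(f)$ is Artinian, so the linear system $g_1\cdot S^{e_2-e_1}V^*+\langle g_2\rangle$ is base-point free and Bertini's theorem in characteristic zero yields a squarefree member.
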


As for cactus rank, with the same notation as above, one has $\cactusR_d(f) = e_1$. Indeed, $(g_1)$ always defines a $0$-dimensional scheme of degree $e_1$ apolar to $f$ and there are no apolar schemes of smaller degree since there are no elements of smaller degree in the apolar ideal of $f$. If $e_1 < e_2$, then the $0$-dimensional scheme defined by $g_1$ is the unique minimal $0$-dimensional scheme apolar to $f$.

\section{Computations}\label{sec: computations}
In this section, we prove our main results. We consider special families of symmetric tensors and we study their $k$-th gradient (cactus) ranks. As we already explained, we will focus mostly on the cases where the inequalities of \eqref{eq: intro} become equalities. 

\subsection{Binary forms}
In this section, we obtain a complete result on the gradient ranks and gradient cactus ranks of binary forms. 

\begin{proposition}\label{prop: binary forms}
Let $d\in \mathbb N$ and $f \in S^d V$, with $\dim V  = 2$. Then, for any $k < d$,
\[
 \gradR{k}(f) = \min \{ \rmR_d(f) , d-k+1 \} \quad \text{ and }  \quad \gradc{k}(f) = \min \{ \cactusR_d(f) , d-k+1 \}. 
\]
Consequently, for any $\ud \partinto[m]{d}$ with $d_m = d-k$, we have:
\begin{enumerate}[\rm (i)]
	\item if $\rmR_d(f) \leq d-k+1$, then $\rk_d(f) = \rk_\ud(f) = \gradR{k}(f)$;
	\item if $\cactusR_d(f) \leq d-k+1$, then $\cactusR_d(f) = \cactusR_\ud(f) = \gradc{k}(f)$.
\end{enumerate}
 \end{proposition}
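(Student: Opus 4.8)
The plan is to compute $\gradR{k}(f)$ and $\gradc{k}(f)$ directly using Sylvester's Theorem together with Proposition \ref{prop: apolar gradient}. By Proposition \ref{prop: apolar gradient}, the simultaneous apolar ideal $\Ann_{d-k}(\nabla^k f)$ agrees with $\Ann_d(f)$ in all degrees $i \leq d-k$ and equals the whole ring in degrees $i \geq d-k+1$. Since $\dim V = 2$, Sylvester's Theorem (Theorem \ref{thm: sylvester}) tells us $\Ann_d(f) = (g_1, g_2)$ with $\deg(g_1) = e_1 \leq e_2 = \deg(g_2)$ and $e_1 + e_2 = d+2$. The forms $g_1,g_2$ generate the apolar ideal of the collection $\nabla^k f$ in low degrees, but crucially the collection picks up \emph{all} of $S^i V^*$ once $i \geq d-k+1$, so the apolar ideal of $\nabla^k f$ acquires a generator in degree $d-k+1$ that is not present in $\Ann_d(f)$.

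First I would determine the structure of $\Ann_{d-k}(\nabla^k f)$ as an ideal. By Proposition \ref{prop: apolar gradient} its generators are $g_1$ (if $e_1 \leq d-k$), possibly $g_2$ (if $e_2 \leq d-k$), together with a full basis of $S^{d-k+1} V^* = \langle y_0^{d-k+1}, \dots \rangle$. Then I would apply the simultaneous Apolarity Lemma (Lemma \ref{remark: simul apolar}), which says $\gradR{k}(f)$ is the minimal degree of a reduced $0$-dimensional scheme apolar to $\nabla^k f$, i.e.\ the minimal length of a set of points whose ideal is contained in $\Ann_{d-k}(\nabla^k f)$; likewise $\gradc{k}(f)$ is the minimal degree of any $0$-dimensional scheme. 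In $\bbP^1$ every $0$-dimensional scheme is a principal ideal $(h)$, so I look for the minimal degree homogeneous $h \in \Ann_{d-k}(\nabla^k f)$, subject to $h$ having distinct roots (for $\gradR{k}$) or arbitrary roots (for $\gradc{k}$).

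The key case analysis is whether $e_1 \leq d-k$ or $e_1 = d-k+1$ (note $e_1 \leq \lfloor (d+2)/2 \rfloor$, so for small $k$ we could have $e_1 > d-k$). If $e_1 \leq d-k$, then $g_1 \in \Ann_{d-k}(\nabla^k f)$ and it is the minimal-degree element, so by the Sylvester argument $\gradc{k}(f) = e_1 = \cactusR_d(f)$ and $\gradR{k}(f) = \rmR_d(f)$ (using the distinct-roots dichotomy exactly as in Theorem \ref{thm: sylvester}); in this regime $\rmR_d(f), \cactusR_d(f) \leq d-k+1$ so the minimum in the statement is achieved by the rank. If instead $e_1 \geq d-k+1$, then the lowest-degree element of $\Ann_{d-k}(\nabla^k f)$ lives in degree $d-k+1$, where the ideal is the whole space $S^{d-k+1}V^*$; a generic element of this space has $d-k+1$ distinct roots, giving $\gradR{k}(f) = \gradc{k}(f) = d-k+1$, and this matches $\min\{\rmR_d(f), d-k+1\}$ and $\min\{\cactusR_d(f), d-k+1\}$ since both ranks are $\geq d-k+1$ in this regime. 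I would reconcile the boundary case $e_1 = d-k+1$ carefully, checking that $g_1$ (which may have repeated roots) can be replaced by a generic element of the full degree-$(d-k+1)$ component. The consequences (i) and (ii) then follow: when $\rmR_d(f) \leq d-k+1$ the first formula gives $\gradR{k}(f) = \rmR_d(f)$, and combining with the chain \eqref{eq: intro}, namely $\rmR_d(f) \geq \rmR_\ud(f) \geq \gradR{k}(f) = \rmR_d(f)$, forces equality throughout; the cactus statement follows identically from \eqref{ineq: cactus ranks}.

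The main obstacle I expect is the boundary regime where $e_1 = d-k+1$, i.e.\ the transition between ``the apolar ideal already contains a low-degree form $g_1$'' and ``the only apolar forms appear in the top degree $d-k+1$.'' Here one must verify that passing from the specific generator $g_1$ of $\Ann_d(f)$ to the enlarged component $S^{d-k+1}V^*$ does not change the minimal-length reduced apolar scheme by more than the formula predicts, and in particular that the distinct-roots condition governing the Waring-versus-cactus distinction behaves correctly exactly at the threshold $d-k+1$. This amounts to confirming that whenever $\rmR_d(f) = d-k+1$ one still has a degree-$(d-k+1)$ form with distinct roots in the ideal (which is immediate from genericity in the full component) so that no discrepancy arises between the Waring and cactus formulas at the boundary.
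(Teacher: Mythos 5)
Your toolkit is the right one (Proposition \ref{prop: apolar gradient}, principality of ideals of $0$-dimensional subschemes of $\bbP^1$, the simultaneous Apolarity Lemma), and both your cactus analysis and your second case $e_1 \geq d-k+1$ are correct. But the first case of your analysis for the Waring rank contains a genuine error. You claim that if $e_1 \leq d-k$ then $\gradR{k}(f) = \rmR_d(f)$ ``using the distinct-roots dichotomy exactly as in Theorem \ref{thm: sylvester},'' and that in this regime $\rmR_d(f) \leq d-k+1$. Both claims fail whenever $g_1$ has a repeated root and $e_2 > d-k+1$ (equivalently $e_1 \leq k$; note this forces $k \geq 2$): then Sylvester gives $\rmR_d(f) = e_2 > d-k+1$, while $\Ann_{d-k}(\nabla^k f)$ contains \emph{all} of $S^{d-k+1}V^*$, in particular square-free forms of degree $d-k+1$ that do not lie in $\Ann_d(f)$, so in fact $\gradR{k}(f) = d-k+1 < \rmR_d(f)$. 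Concretely, take $f = x_0x_1^{9}$ and $k=2$: here $\Ann_{10}(f) = (y_0^2,\, y_1^{10})$, so $e_1 = 2 \leq d-k = 8$ puts you in your first case, yet $\rmR_{10}(f) = 10$ while $\gradR{2}(f) = 9 = d-k+1$ (in degrees $\leq 8$ the gradient apolar ideal consists of multiples of $y_0^2$, none square-free, and in degree $9$ it is everything). Your proposal would output $10$, contradicting the very formula you are trying to prove, which gives $\min\{10,9\}=9$. Note also that the difficulty is not where you locate it (the boundary $e_1 = d-k+1$): it sits squarely inside your first case, because $e_1$ controls the cactus rank but not the Waring rank.

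The repair is exactly the case split the paper uses, and it makes Sylvester's structure theorem unnecessary: split on $\rmR_d(f) < d-k+1$ versus $\rmR_d(f) \geq d-k+1$ (and analogously for cactus rank), i.e.\ on the minimal degree of a \emph{square-free} element of $\Ann_d(f)$ rather than on $e_1$. If $\rmR_d(f) < d-k+1$, a minimal set of points apolar to $\nabla^k f$ has cardinality $\leq d-k$, hence its (principal) ideal is generated in degree $\leq d-k$, and Remark \ref{rmk: apolar to f iff apolar to nablaf} shows it is apolar to $f$ itself, forcing $\gradR{k}(f) = \rmR_d(f)$. If $\rmR_d(f) \geq d-k+1$, any square-free element of $(\Ann_{d-k}(\nabla^k f))_{d-k+1} = S^{d-k+1}V^*$ gives $\gradR{k}(f) \leq d-k+1$, and the same Remark rules out anything smaller. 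With your dichotomy replaced by this one, the rest of your argument (including the deduction of (i) and (ii) from \eqref{eq: intro} and \eqref{ineq: cactus ranks}) goes through.
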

\begin{proof}

By definition, $\gradR{k}(f) \leq \rmR_d(f)$ and $\gradc{k}(f) \leq \cactusR_d(f)$. For $\rmR_d(f) < d-k+1$ (respectively, $\cactusR_d(f) < d-k+1$), we conclude by Remark \ref{rmk: apolar to f iff apolar to nablaf}. Conversely, suppose $\rmR_d(f) \geq d-k+1$ (respectively, $\cactusR_d(f) \geq d-k+1$). Since $(\Ann_{d-k}(\nabla^k f)) _{d-k+1} = S^{d-k+1} V^*$ by Proposition \ref{prop: apolar gradient}, any square-free element (respectively, any element) of $ S^{d-k+1} V^*$ defines a set of $d-k+1$ points (respectively, a $0$-dimensional scheme of degree $d-k+1$) in $\bbP V$ apolar to $f$. This implies $\gradR{k}(f) \leq d-k+1$ (respectively, $\gradc{k}(f) \leq d-k+1$). Again, the lower bound follows by Remark \ref{rmk: apolar to f iff apolar to nablaf}. The second part of the statement follows from the first one by the chain of inequalities \eqref{eq: intro}.
\end{proof}

\begin{remark}
In \cite[Corollary 3.12]{ZhangHuangQi:ComonsConjecture}, the authors proved that the original Comon's question (Question \ref{question} for $\ud = 1^d \partinto{d}$) has an affirmative answer in the case of binary forms. Therefore, by Lemma \ref{lemma: d geq d'}, it follows that Question \ref{question} has an affirmative answer for any $\ud \partinto{d}$. In fact, by \eqref{eqn: corol gradient rank equality} in Corollary  \ref{corol: gradient rank as segre-veronese}, this implies the part (i) of the statement in Proposition \ref{prop: binary forms} in the case $k = 1$.
\end{remark}

\begin{remark}\label{rmk: simul decomp binary}
The proof of Theorem \ref{thm: gradient monomials} gives interesting insights on minimal schemes apolar to the $k$-th gradient of a binary form and, in particular, on their relations with minimal schemes apolar to the form itself. Here, we resume some observations:
\begin{enumerate}[\rm (i)]
	\item if $\rmR_d(f) < d-k+1$ (respectively, $\cactusR_d(f) < d-k+1$), the minimal reduced (respectively, not necessarily reduced) $0$-dimensional schemes apolar to $f$ are the same as the ones minimally simultaneously spanning $\nabla^k f$. Note that for $\rmR_d(f) < \frac{d+1}{2}$ (respectively, $\cactusR_d(f) < \frac{d+1}{2}$), such a reduced (respectively, not necessarily reduced) $0$-dimensional scheme is unique by Sylvester's Theorem \ref{thm: sylvester} (respectively, for the comments on cactus ranks of binary forms at the end of Section \ref{sec: Sylvester});
	\item if $\rmR_d(f) = d-k+1$, we have that the rank $\rmR_d(f)$ and the gradient rank $\gradR{k}(f)$ are the same, but we can find minimal schemes apolar to $\nabla^k f$ which are not apolar to $f$ itself. For example, $x_0x_1^{d-1}$ has rank $d$ and any minimal apolar set of $d$ points does not involve the point $[x_1] \in \bbP V$, see \cite[Section~3.2]{CCO17:WaringLoci}. However, if we consider the first partial derivatives $\nabla f = \{x_1^{d-1}, x_0x_1\}$, we have that the set of points $\bbX = \{[x_1]\} \cup \{[x_0+\xi x_1]: \xi^{d-1} = 1\}$ are apolar to $\nabla f$; indeed, $I_\bbX = \big(y_0(y_0^{d-1}-y_1^{d-1})\big)$ which is contained in $\Ann_{d-1}(\nabla f) = (y_0^2,y_1^d,y_0y_1^{d-1})$.
	\item More generally, if $ \rmR_d(f) \geq d-k+1$ (respectively, $\cactusR_d(f) \geq d-k+1$) then $\gradR{k}(f) = d-k+1$ (respectively, $\cactusR_d(f) = d-k+1$):  in such a case, any set of $d-k+1$ points (respectively, any $0$-dimensional scheme of degree $d-k+1$) is apolar to $\nabla^k f$. Indeed, such a scheme is defined by a principal ideal whose generator has degree $d-k+1$ and, therefore, it is contained in the apolar ideal of the $k$-th gradient of $f$ because, by Proposition \ref{prop: apolar gradient}, $\left(\Ann_{d-k}(\nabla^k f)\right)_{d-k+1} = S^{d-k+1}V^*$.
\end{enumerate}
\end{remark}

\subsection{Ternary and quaternary cubics}\label{sec: cubics}

Comon's question in the case of cubic forms in three or four variables, that is $f \in S^3 V$ with $\dim V = 3,4$ has an affirmative answer: the proof exploits the fact that in these two cases it is possible to classify the orbits under the action of the group $GL(V)$: see \cite[Theorem 7.1(4)]{Fri:RmksSymmetricRank} for three variables and \cite[Theorem 1.3]{Seig:RanksSymRanksCubSurf} for four variables. The statement for cactus rank follows from \cite[Lemma 3.1 and Theorem 1.5]{Seig:RanksSymRanksCubSurf}, which prove that the Question \ref{question} has affirmative answer for border rank in the case of ternary and quaternary cubics, respectively, and from \cite[Sections 3.5 and 3.6]{BuBu15:OnDifference}, which guarantee that in these cases border rank coincides with cactus rank.

Proposition \ref{prop: gradient plane cubics} uses the techniques developed in Section \ref{section: basics} to recover the result in three variables and proves additionally the equality $\cactusR_{1,2}(f) = \gradc{}(f)$. Proposition \ref{prop: gradient cubic surfaces} proves the equality $\cactusR_{1,2}(f) = \gradc{}(f)$ in the case with four variables.

\begin{proposition}\label{prop: gradient plane cubics}
	Let $f \in S^3V$, with $\dim V = 3$. Then
\begin{align*}
		&\rk(f) = \rk_{1,2}(f) = \gradR{}(f) \quad \text{ and } \\
		&\cactusR(f) = \cactusR_{1,2}(f) = \gradc{}(f).  
\end{align*}
\end{proposition}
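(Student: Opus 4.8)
The plan is to notice that, in each of the two displayed lines, two of the three quantities are already tied together by the general results of Section~\ref{section: basics}, so that only one inequality carries content. Indeed $\rmR_{1,2}(f)=\gradR{}(f)$ is precisely \eqref{eqn: corol gradient rank equality} of Corollary~\ref{corol: gradient rank as segre-veronese}, while the chains \eqref{eq: intro} and \eqref{ineq: cactus ranks} give $\rmR_3(f)\ge \rmR_{1,2}(f)$ and $\cactusR_3(f)\ge\cactusR_{1,2}(f)\ge\gradc{}(f)$. Hence it suffices to prove the two reverse inequalities $\gradR{}(f)\ge\rmR_3(f)$ and $\gradc{}(f)\ge\cactusR_3(f)$; concretely, that a minimal simultaneous apolar set (respectively scheme) to $\nabla f$ yields an apolar set (respectively scheme) to $f$ of the same size.

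The engine is Proposition~\ref{prop: apolar gradient}: $\Ann_2(\nabla f)$ coincides with $\Ann_3(f)$ in degrees $\le 2$ and equals $S^iV^*$ for $i\ge 3$. Thus a $0$-dimensional scheme $\bbX\subseteq\bbP V$ is apolar to $\nabla f$ exactly when $(I_\bbX)_1\subseteq(\Ann_3 f)_1$ and $(I_\bbX)_2\subseteq N$, where $N:=(\Ann_3 f)_2$ is the net of apolar conics, with no condition in higher degree; being apolar to $f$ asks in addition that $(I_\bbX)_3$ lie in the hyperplane $(\Ann_3 f)_3\subseteq S^3V^*$. I would organise the argument by the catalecticant value $h:=\HF(A_{\Ann_3 f};1)=\rank\cat_1(f)\in\{1,2,3\}$, which by the same computation bounds all four ranks from below by $h$. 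When $h\le 2$ the form $f$ uses at most two variables, so after a coordinate change it is a binary cubic; then $(\Ann_3 f)_1$ contains a linear form confining every apolar scheme to the corresponding $\bbP^1$, and Sylvester's Theorem~\ref{thm: sylvester} together with the description of cactus ranks of binary forms at the end of Section~\ref{sec: Sylvester} settles both equalities by a direct computation (for instance $\gradR{}(x_0^2x_1)=3$ and $\gradc{}(x_0^2x_1)=2$, matching the Waring and cactus ranks).

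The real work is the case $h=3$, where $\Ann_3(f)$ has symmetric Hilbert function $1,3,3,1$ and $N$ is a net of conics; everything is then governed by its base scheme $\mathcal{B}:=V(N)\subseteq\bbP V$. If $\mathcal{B}=\emptyset$ the three conics are a regular sequence, so $\Ann_3(f)=(N)$ is a complete intersection generated in degree $2$; a general pencil in $N$ is a complete intersection of two conics cutting out four reduced points, and since its ideal is generated by conics of $N\subseteq\Ann_3(f)$ these points are apolar to $f$, giving $\rmR_3(f)\le 4$, while emptiness of $\mathcal{B}$ rules out any apolar object to $\nabla f$ of degree $\le 3$ (such would force $(I_\bbX)_2=N$, hence $\bbX\subseteq\mathcal{B}$). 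With the chains this gives $\gradR{}(f)=\rmR_3(f)=\gradc{}(f)=\cactusR_3(f)=4$. When $\mathcal{B}$ is a reduced set of points its points are simultaneously a minimal apolar set to $f$ and to $\nabla f$, and the equalities follow at once.

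I expect the genuine obstacle to be the maximal-rank stratum, where $\mathcal{B}$ is a non-reduced scheme of degree three, modelled on $f=x_0^2x_1+x_2^3$; here Remark~\ref{rmk: apolar to f iff apolar to nablaf} is useless, since a minimal reduced apolar set to $\nabla f$ has five points and its ideal must carry cubic generators. The hard step is to prove $\gradR{}(f)\ge 5$ by hand: every conic of $N$ contains $\mathcal{B}$, so any reduced $\bbX$ with $(I_\bbX)_2\subseteq N$ and $|\bbX|\le 4$ would be forced onto $\mathcal{B}$, which is impossible because $\mathcal{B}$ is supported on at most two points; this pins $\gradR{}(f)=5=\rmR_3(f)$. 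For the cactus rank the roles reverse pleasantly: the fat-point scheme $\mathcal{B}$ itself is apolar to $f$ — one verifies $(I_{\mathcal{B}})_3\subseteq(\Ann_3 f)_3$ — so it realises $\cactusR_3(f)=\gradc{}(f)=\deg\mathcal{B}=3$, meeting the catalecticant bound. This dichotomy, with reduced apolar sets obstructed by the non-reducedness of $\mathcal{B}$ while the single fat point $\mathcal{B}$ gives the optimal cactus decomposition, is the crux of the proof and the phenomenon recorded in the accompanying remark and example on maximal-rank cubics.
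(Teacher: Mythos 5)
Your reductions and your treatment of the easy strata are sound and essentially parallel the paper's own proof: the identity $\rmR_{1,2}(f)=\gradR{}(f)$ from \eqref{eqn: corol gradient rank equality}, the chains \eqref{eq: intro} and \eqref{ineq: cactus ranks}, the reduction of the case $\rank\cat_1(f)\le 2$ to Proposition \ref{prop: binary forms}, and, when $h=3$, the observation that a scheme $\bbX$ apolar to $\nabla f$ with $\deg(\bbX)\le 3$ satisfies $(I_\bbX)_2=N$ by a dimension count, hence $\bbX\subseteq V(N)$, are all correct. The genuine gap is in your maximal-rank case, and the first problem is logical: from $(I_\bbX)_2\subseteq N$ and $|\bbX|\le 4$ you conclude that $\bbX$ is ``forced onto $\mathcal{B}$'', but inclusion of ideals reverses zero loci, so what one gets is $\bbX\subseteq V\bigl((I_\bbX)_2\bigr)\supseteq V(N)=\mathcal{B}$ --- the opposite containment. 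The dimension count forcing $(I_\bbX)_2=N$ requires $6-|\bbX|\ge 3$, i.e.\ $|\bbX|\le 3$; for $|\bbX|=4$ the space $(I_\bbX)_2$ is merely a pencil inside $N$, whose base locus is strictly larger than $\mathcal{B}$, so no contradiction follows and the crucial bound $\gradR{}(f)\ge 5$ is simply not established.

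The second problem is that your stratification is false: a non-reduced degree-$3$ base scheme does not characterize maximal rank. Your own model form $f=x_0^2x_1+x_2^3$ has rank $4$ (it is $x_0^2x_1$, of rank $3$, plus a cube in an independent variable), yet its apolar net $N=\langle y_1^2,\,y_1y_2,\,y_0y_2\rangle$ has non-reduced base scheme of degree $3$ supported at the two points $[x_0]$ and $[x_2]$; were your argument valid, it would prove $\gradR{}(f)\ge 5$ for this form, contradicting the very proposition you are proving. The unique rank-$5$ plane cubic is instead $x_0(x_0x_1+x_2^2)$, whose base scheme is curvilinear of degree $3$ supported at a single point. (Your reduced case also tacitly assumes $\deg\mathcal{B}=3$; a reduced $\mathcal{B}$ of degree $\le 2$ would make the claim ``its points are apolar to $f$'' impossible since $h=3$, so this possibility must be excluded, not ignored.) The paper avoids both issues by stratifying by the value of the rank rather than by $\mathcal{B}$: for rank (or cactus rank) $4$ it uses exactly the degree-$\le 3$ argument above, the cactus statement then being complete because the cactus rank of plane cubics is at most $4$; for the unique rank-$5$ form it analyzes $4$-point sets directly --- if the points are in general linear position, $I_\bbX$ is generated by a pencil of conics, hence $I_\bbX\subseteq\Ann_3(f)$ by Proposition \ref{prop: apolar gradient}, a contradiction, and if three are collinear, $I_\bbX$ cannot be radical. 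A correct repair in the spirit of your idea does exist for the first configuration (four points in general position are the base locus of the pencil $(I_\bbX)_2\subseteq N$, and every pencil inside $N$ has base locus containing the non-reduced $\mathcal{B}$, contradicting reducedness), but the collinear configuration still needs a separate argument, and you must first correctly isolate which forms constitute the rank-$5$ stratum.
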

\begin{proof}
If the first catalecticant of $f$ is not full-rank, then there is a choice of coordinates such that $f$ can be written in fewer variables; in this case $f$ is a binary form and the statement follows from Proposition \ref{prop: binary forms}.

Hence, assume that the first catalecticant of $f$ is full-rank, which implies that rank and cactus rank of $f$ are at least $3$. Therefore, if $f$ has rank $3$ (cactus rank $3$, respectively), then the claim directly follows.

Let $f$ have rank $4$ (cactus rank $4$, respectively) and suppose that $\nabla f$ has an apolar reduced (not necessarily reduced, respectively) $0$-dimensional scheme $\bbX$ with $\deg(\bbX) = 3$. By Proposition \ref{prop: apolar gradient}, we have that $\HF(A_{\Ann_{2}(\nabla f)};1) = 3$, which implies that $\bbX$ is not contained in a line. Since $\deg(\bbX) = 3$, the ideal $I_{\bbX}$ is generated by three quadrics, so $I_\bbX$ is generated in degree $2$. By Proposition \ref{prop: apolar gradient}, we deduce $I_\bbX \subseteq \Ann_3(f)$, contradicting the assumption that $f$ has rank (cactus rank, respectively) $4$.

The cactus rank of plane cubics is at most $4$, see e.g. \cite[Section~3.5]{BuBu15:OnDifference}, so the second part of the statement is proved. The rank of plane cubics is at most $5$ and there is a unique form of rank $5$ up to change of coordinates, which is $f = x_0(x_0x_1+x_2^2)$; see for instance \cite{LaTe10:RanksBorderRanks}. Suppose $\gradR{}(f) \leq 4$ and let $\bbX$ be a set of four points apolar to $\nabla f$. The set $\bbX \subseteq \bbP V = \bbP^2$ may have two possible configurations: either the points in $\bbX$ are in general linear position or three of them lie on a line $\ell$. In the first case, two conics generate $I_{\bbX}$ so $I_{\bbX}\subseteq \Ann_3(f)$ which contradicts that $\gradR{}(f) = 4$. In the second case, one can easily show that $I_{\bbX}$ cannot be radical, which is a contradiction.
\end{proof}
 
Even though the ranks coincide, simultaneous decompositions of the gradient of a plane cubic do not always come from decompositions of the cubic itself. Indeed, as already observed in the case of binary forms (Remark \ref{rmk: simul decomp binary}(ii)), sometimes it is possible to construct a simultaneous decomposition of the gradient which contains some of the forbidden points (in the sense of \cite{CCO17:WaringLoci}) of the original form.

\begin{example}\label{ex: maximal rank cubic}
Let $f = x_0(x_0x_1 + x_2^2)$ be the unique plane cubic of maximal rank up to choice of coordinates; namely, $\rmR_3(f) = 5$. By \cite[Theorem 3.18]{CCO17:WaringLoci}, there are no minimal Waring decompositions of $f$ involving $x_0^3$, or equivalently if $\bbX$ is a set of $5$ points apolar to $f$, then $[x_0] \notin \bbX$. Consider the set of points $\bbY$ defined by
\[
 I_\bbY = (y_1y_2, y_0^2y_2+y_0y_2^2+y_2^3, y_0^2y_1-y_0y_1^2) ;
\]
one can check that $I_\bbY \subseteq \Ann_{2}(\nabla f)$ and 
\[
\bbY = \{[x_0],[x_1],[x_0-x_1],[(\omega+1)x_0-2x_2],[(-\omega+1)x_0-2x_2]\} \subseteq \bbP V ,
\]
where $\omega^2 + 3 = 0$. Explicitly, we have 
\[
\begin{array}{rcl}
		\frac{\partial f}{\partial x}= & 2x_0x_1+x_2^2 & = {\scriptsize x_1^2 - (x_0-x_1)^2 + 2 x_0^2 +\frac{\omega+3}{24} \bigl((\omega+1)x_0-2x_2\bigr)^2}+ \\
& & + \frac{3-\omega}{24} \bigl((-\omega+1)x_0-2x_2 \bigr)^2; \\		
& &		\\
		\frac{\partial f}{\partial y}= & x_0^2 & = x_0^2; \\		
& &		\\
		\frac{\partial f}{\partial z}= & 2x_0x_2 & =  x_0^2 +\frac{\omega}{12}\bigl((\omega+1)x_0-2x_2\bigr)^2 - \frac{\omega}{12}\bigl((-\omega+1)x_0-2x_2\big)^2;
	\end{array}
\]

This shows that $\bbY$ defines a simultaneous decomposition of $\nabla f$ containing the point $[x_0]$ which is forbidden for $f$.
\end{example}

\begin{proposition}\label{prop: gradient cubic surfaces}
 Let $f \in S^3 V$ with $\dim V = 4$. Then 
 \[
\cR_{3}(f) = \cR_{2,1} = \gradc{}(f).
 \]
\end{proposition}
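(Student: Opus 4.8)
The plan is to reduce everything to the reverse of the chain \eqref{ineq: cactus ranks}. Since $f$ is symmetric we have $\cactusR_{2,1}(f) = \cactusR_{1,2}(f)$ (cf. the remark following Lemma \ref{lemma: d geq d'} and Lemma \ref{lemma: cactus d geq d'}), and the chain \eqref{ineq: cactus ranks} taken with $\ud = (1,2)$ (so $k=1$) already gives
\[
\gradc{}(f) \leq \cactusR_{1,2}(f) \leq \cactusR_3(f).
\]
Hence it suffices to prove $\cactusR_3(f) \leq \gradc{}(f)$, after which all three quantities coincide. The whole argument rests on Proposition \ref{prop: apolar gradient} with $d=3$, $k=1$ — so that $(\Ann_2(\nabla f))_i = (\Ann_3(f))_i$ for $i \leq 2$ and $(\Ann_2(\nabla f))_i = S^iV^*$ for $i \geq 3$ — together with Remark \ref{rmk: apolar to f iff apolar to nablaf}: a $0$-dimensional scheme $\bbX$ whose ideal is generated in degree at most $2$ is apolar to $\nabla f$ if and only if it is apolar to $f$.

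First I would dispose of the case where $\cat_1(f)$ is not of full rank. Then, after a change of coordinates, $f$ is a cubic in at most three essential variables, and by the standard reduction to essential variables a minimal scheme apolar to $f$ (respectively to $\nabla f$) may be chosen inside the corresponding $\bbP^2 \subseteq \bbP V$; thus $\cactusR_3(f)$ and $\gradc{}(f)$ coincide with the analogous quantities for $f$ regarded as a ternary (or binary) cubic, and the claim follows from Proposition \ref{prop: gradient plane cubics} (resp. Proposition \ref{prop: binary forms}). This is exactly the reduction already used in the proof of Proposition \ref{prop: gradient plane cubics}.

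Now assume $\cat_1(f)$ has full rank $4$, so the apolar algebra has Hilbert function $(1,4,4,1)$ and the catalecticant bound (Remark \ref{rmk: flattenings}) gives $\cactusR_3(f) \geq 4$. For every $0$-dimensional $\bbX$ apolar to $\nabla f$ one has $\HF_\bbX(1) \geq \HF(A_{\Ann_2(\nabla f)};1) = 4$, and since $\HF_\bbX(1) \leq \deg(\bbX)$ this forces $\gradc{}(f) \geq 4$. Let $\bbX$ be a minimal scheme apolar to $\nabla f$, of degree $s := \gradc{}(f) \geq 4$. If $s = 4$, then from $4 = \HF_\bbX(1) \leq \HF_\bbX(i) \leq \deg(\bbX) = 4$ we get $\HF_\bbX \equiv (1,4,4,\dots)$, so $\reg(\bbX)=1$ and, by Remark \ref{rmk: regularity}, $I_\bbX$ is generated in degree at most $2$; Remark \ref{rmk: apolar to f iff apolar to nablaf} then shows $\bbX$ is apolar to $f$, whence $\cactusR_3(f) \leq 4 = \gradc{}(f)$. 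If instead $s \geq 5$, I would invoke the upper bound $\cactusR_3(f) \leq 5$ for cubic surfaces — coming from the non-defectivity of $\sigma_5(\nu_3(\bbP^3))$ and the coincidence of border and cactus rank in this range, see \cite[Section 3.6]{BuBu15:OnDifference} — so that $\cactusR_3(f) \geq \gradc{}(f) = s \geq 5$ and $\cactusR_3(f) \leq 5$ together force $s = \cactusR_3(f) = 5$. In both cases $\cactusR_3(f) = \gradc{}(f)$, and $\cactusR_{1,2}(f)$ is squeezed to the common value.

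The main obstacle is precisely the range $s \geq 5$: when $\deg(\bbX) = 5$ one has $\HF_\bbX = (1,4,5,5,\dots)$, of regularity index $2$, so $I_\bbX$ may genuinely require a cubic generator and one cannot conclude from Remark \ref{rmk: apolar to f iff apolar to nablaf} that $\bbX$ is apolar to $f$. I do not expect to rule this out by a purely apolar/Hilbert-function argument; this is exactly why the external bound $\cactusR_3(f) \leq 5$ is needed to close the case, while everything else reduces to bookkeeping with Hilbert functions and the regularity index.
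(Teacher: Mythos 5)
Your proof is correct and takes essentially the same route as the paper's: the reduction to fewer variables via Proposition \ref{prop: gradient plane cubics} when $\cat_1(f)$ is degenerate, the Hilbert-function lower bound $\gradc{}(f) \geq 4$, the regularity argument (Remark \ref{rmk: regularity} plus Remark \ref{rmk: apolar to f iff apolar to nablaf}) showing any degree-$4$ scheme apolar to $\nabla f$ is apolar to $f$, and the external bound $\cR_3(f) \leq 5$ from \cite{BuBu15:OnDifference}. The only cosmetic difference is that you organize the case analysis by the value of $\gradc{}(f)$ rather than by the value of $\cR_3(f)$.
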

\begin{proof}
Recall that $\cR_3(f) \leq 5$ (see e.g. \cite{BuBu15:OnDifference}). If the first catalecticant of $f$ is not full-rank, then there is a choice of coordinates such that $f$ can be written in fewer variables; in this case, the result follows from Proposition \ref{prop: gradient plane cubics}. Therefore suppose that the first catalecticant is full-rank, or equivalently $\HF(A_{\Ann_3(f)};1) = 4$. 
 
Let $\bbX $ be a $0$-dimensional scheme apolar to $\nabla f$, so that by apolarity $I_\bbX \subseteq \Ann_{2}(\nabla f)$. Since $(\Ann_3(f))_{1}  = (\Ann_3(\nabla f))_{1}$, we obtain the lower bound
\[
\gradc{}(f) \geq \HF(A_{\Ann_3(f)};1) = 4, 
\]
and therefore $\deg(\bbX) \geq 4$, providing the result whenever $\cR_3(f) \leq 4$.

If $\deg(\bbX) = 4$, by Remark \ref{rmk: regularity} the ideal $I_\bbX$ is generated by quadrics, and therefore $\bbX$ is apolar to $f$ because $(\Ann_3(f))_{2}  = (\Ann_3(\nabla f))_{2}$ by Proposition \ref{prop: apolar gradient}. This shows that if $\cR_3(f) = 5$, and $\bbX$ is apolar to $\nabla f$, then $\deg(\bbX) \geq 5$. This concludes the proof.
\end{proof}

\subsection{Monomials}
We consider the case of monomials. Recall the result on Waring rank.
\begin{theorem}[\cite{CarCatGer:SolutionMonomials}]\label{thm: solution Waring monomials}
	Let $f = \bfx^\alpha$ with $\alpha_0 = \min_i \{\alpha_i\}$. Then
	$$
		\rk_d(f) = \frac{1}{\alpha_0+1}\prod_{i=0}^n (\alpha_i+1).
	$$
\end{theorem}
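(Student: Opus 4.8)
The plan is to prove both bounds via the Apolarity Lemma (Lemma \ref{lemma: apolarity homog}), starting from the apolar ideal of a monomial. First I would record that, for $f = \bfx^\alpha$,
\[
\Ann_d(f) = (y_0^{\alpha_0+1}, \ldots, y_n^{\alpha_n+1}),
\]
a complete intersection: each generator $y_i^{\alpha_i+1}$ annihilates $f$ since $\bfy^{(\alpha_i+1)\ue_i} \circ \bfx^\alpha = 0$, and a dimension count (the quotient has the monomials $\bfy^\beta$ with $\beta \le \alpha$ as a basis, so it is Artinian Gorenstein of socle degree $d$ and agrees with $\Ann_d(f)$ by Macaulay duality) shows there is nothing more. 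By the Apolarity Lemma, $\rk_d(f)$ is then the least cardinality of a reduced set of points $\bbX \subseteq \bbP V$ with $I_\bbX \subseteq \Ann_d(f)$.

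For the upper bound I would exhibit such a set explicitly. Since $\alpha_0 = \min_i \alpha_i$, for each $i \ge 1$ the form $y_0^{\alpha_i+1} = y_0^{\alpha_i-\alpha_0}\,y_0^{\alpha_0+1}$ lies in $\Ann_d(f)$, hence so does $g_i := y_i^{\alpha_i+1} - y_0^{\alpha_i+1}$. The ideal $J = (g_1, \ldots, g_n)$ is a complete intersection of $n$ forms in $\bbP V \cong \bbP^n$, so it defines a $0$-dimensional scheme of degree $\prod_{i=1}^n(\alpha_i+1)$; setting $y_0 = 1$ shows its zero locus consists of the $\prod_{i=1}^n(\alpha_i+1)$ distinct points obtained by letting each $y_i$ range over the $(\alpha_i+1)$-th roots of unity, with no solutions at $y_0 = 0$. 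Thus $\bbX = V(J)$ is reduced and apolar to $f$, giving
\[
\rk_d(f) \le \prod_{i=1}^n(\alpha_i+1) = \frac{1}{\alpha_0+1}\prod_{i=0}^n(\alpha_i+1).
\]

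The lower bound is the main obstacle. The naive catalecticant bound from Remark \ref{rmk: flattenings}, namely $\max_i \rank \cat_i(f) = \max_i \HF(S^\bullet V^*/\Ann_d(f); i)$, is strictly too weak: for a monomial complete intersection this maximum is the peak of the unimodal Hilbert function of $\prod_i(1+t+\cdots+t^{\alpha_i})$, which is in general smaller than $\prod_{i=1}^n(\alpha_i+1)$ (already $f = x_0 x_1^2$ has catalecticant bound $2$ but rank $3$). The sharp bound must exploit the full apolarity condition, and the idea I would pursue is to cut by the hyperplane $\{y_0=0\}$: one computes
\[
\dim_\Bbbk S^\bullet V^* \big/ \big(\Ann_d(f)+(y_0)\big) = \dim_\Bbbk \Bbbk[y_1,\ldots,y_n]/(y_1^{\alpha_1+1},\ldots,y_n^{\alpha_n+1}) = \prod_{i=1}^n(\alpha_i+1),
\]
exactly the target value. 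For any apolar $\bbX$ one has $I_\bbX + (y_0) \subseteq \Ann_d(f) + (y_0)$, hence $\HF_{S/(I_\bbX+(y_0))}(i) \ge \HF_{S/(\Ann_d(f)+(y_0))}(i)$ in every degree; when $y_0$ is a non-zerodivisor on $S/I_\bbX$ (no point of $\bbX$ on $\{y_0=0\}$), Lemma \ref{lemma: basics HF} yields $\deg\bbX = \sum_i \HF_{S/(I_\bbX+(y_0))}(i) \ge \prod_{i=1}^n(\alpha_i+1)$, and we would be done.

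The genuine difficulty, and where the real work lies, is that an apolar scheme need not avoid $\{y_0=0\}$, so $y_0$ can be a zero-divisor on $S/I_\bbX$ and the telescoping of Lemma \ref{lemma: basics HF} breaks. Resolving this is the heart of the proof: one must treat the points of $\bbX$ meeting $\{y_0=0\}$ separately, either by a residuation/colon-ideal argument splitting $\bbX$ into its part off the hyperplane and its part on it, or by induction on the number of variables with the base case $n=1$ supplied by Sylvester's Theorem \ref{thm: sylvester} (there $\Ann_d(f)=(y_0^{\alpha_0+1},y_1^{\alpha_1+1})$ with the lower-degree generator $g_1 = y_0^{\alpha_0+1}$ a pure power, hence not squarefree, forcing $\rk_d(f) = \alpha_1+1$). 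I expect the bookkeeping of this reduction — ensuring the residual scheme remains apolar to a monomial in fewer variables of the expected multidegree, so that the inductive hypothesis applies — to be the most delicate part of the argument.
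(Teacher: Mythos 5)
Your setup is sound: the apolar ideal computation, the explicit apolar set giving the upper bound $\prod_{i=1}^n(\alpha_i+1)$, and the recognition that the lower bound must come from the hyperplane section $\{y_0=0\}$ rather than the catalecticant bound are all correct. But the proof stops exactly where the theorem lives: the case where an apolar set $\bbX$ meets $\{y_0=0\}$ is left as a declared open step, with two candidate strategies sketched and neither carried out. This is a genuine gap, and the strategy you flag as delicate (residuation) is in fact short, requiring none of the bookkeeping you anticipate. The key idea you are missing is to apply the colon ideal to \emph{both} sides of the containment $I_\bbX \subseteq \Ann_d(f)$ simultaneously. Set $I_{\bbX'} := I_\bbX : (y_0)$, the ideal of $\bbX' = \bbX \setminus \{y_0 = 0\}$. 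Then $I_{\bbX'} \subseteq \Ann_d(f):(y_0) = \bigl(y_0^{\alpha_0}, y_1^{\alpha_1+1},\ldots,y_n^{\alpha_n+1}\bigr)$; since $\alpha_0 \geq 1$ this colon ideal is proper (so $\bbX' \neq \emptyset$), and
\[
\bigl(\Ann_d(f):(y_0)\bigr) + (y_0) = \bigl(y_0,\, y_1^{\alpha_1+1},\ldots,y_n^{\alpha_n+1}\bigr),
\]
whose quotient still has dimension $\prod_{i=1}^n(\alpha_i+1)$. By construction $y_0$ is a nonzerodivisor on $S^\bullet V^*/I_{\bbX'}$, so Lemma \ref{lemma: basics HF} applies to $\bbX'$, and the termwise Hilbert function inequality coming from $I_{\bbX'}+(y_0) \subseteq \bigl(\Ann_d(f):(y_0)\bigr)+(y_0)$ gives
\[
|\bbX| \;\geq\; |\bbX'| \;=\; \sum_{i\geq 0}\HF\bigl(A_{I_{\bbX'}+(y_0)};i\bigr) \;\geq\; \prod_{i=1}^n(\alpha_i+1).
\]

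This also corrects a misconception in your plan: you do not need ``the residual scheme to remain apolar to a monomial in fewer variables,'' and no induction on $n$ (nor Sylvester as a base case) is needed -- the comparison is made directly with the explicitly computed ideal $\bigl(\Ann_d(f):(y_0)\bigr)+(y_0)$, not with the apolar ideal of another monomial; indeed the points of $\bbX$ lying on $\{y_0=0\}$ do not induce an apolar scheme for any natural residual form, which is why the colon ideal, which discards them, is the right tool. Note where the hypotheses actually enter: minimality of $\alpha_0$ is used only in your upper bound (via $\alpha_i \geq \alpha_0$, so that $g_i \in \Ann_d(f)$), while the lower bound works after dehomogenizing at any variable with positive exponent, and choosing the minimal one makes the bounds meet; the implicit assumption $\alpha_0 \geq 1$ is essential, since for $\alpha_0 = 0$ one has $\Ann_d(f):(y_0) = (1)$ and the bound collapses (consistent with the formula failing for, say, $x_1^2$ viewed as a binary form). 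This colon-ideal argument is precisely the proof in \cite{CarCatGer:SolutionMonomials}, and it is replayed, adapted to gradient rank, in this paper's proof of Theorem \ref{thm: gradient monomials}, where the same mechanism produces the slightly weaker count $|\bbX'| \geq \rk_d(f)-1$ because the relevant colon ideal acquires one extra generator.
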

Our first goal is to establish that the rank of a monomial coincides with the $k$-th gradient rank, for $k$ at most as large as the minimal exponent appearing in the monomial.

\begin{theorem}\label{thm: gradient monomials}
Let $d \in \bbN$ and $k < d$. Let $f = \bfx^\alpha$ be a monomial with $k\leq \alpha_0 = \min_i\{\alpha_i\}$ and $|\alpha| = d$. Then, for any $\ud \partinto[m]{d}$ with $d_m = d-k$, we have
\[
\rmR_d(f) = \rk_\ud(f) = \gradR{k}(f).
\]
\end{theorem}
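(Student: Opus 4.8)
The plan is to prove the single nontrivial inequality $\gradR{k}(f) \ge \rmR_d(f)$ and then close the circle. Indeed, applying Lemma \ref{lemma: d geq d'} to $(d) \succeq \ud$ together with Corollary \ref{corol: gradient rank as segre-veronese} already gives the chain $\rmR_d(f) \ge \rmR_\ud(f) \ge \gradR{k}(f)$ for every $\ud \partinto[m]{d}$ with $d_m = d-k$; hence establishing $\gradR{k}(f) \ge \rmR_d(f)$ forces all three quantities to coincide simultaneously, for every such $\ud$ at once. So from now on I focus on the lower bound for the gradient rank.

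First I record the apolar ideal and a degree estimate. For $f = \bfx^\alpha$ one has the monomial complete intersection $\Ann_d(f) = (y_0^{\alpha_0+1},\ldots,y_n^{\alpha_n+1})$, whose generators have degrees $\alpha_i+1$. The hypothesis $k \le \alpha_0 = \min_i\{\alpha_i\}$ is exactly what is needed to push all these degrees below the threshold $d-k$ at which, by Proposition \ref{prop: apolar gradient}, the ideal $\Ann_{d-k}(\nabla^k f)$ stops agreeing with $\Ann_d(f)$. Concretely, writing $\alpha_n = \max_i\{\alpha_i\}$ and assuming $n \ge 2$, since $k \ge 1$ forces every $\alpha_i \ge \alpha_0 \ge k \ge 1$ we get
\[
d - \alpha_n \;=\; \sum_{i\ne n}\alpha_i \;\ge\; \alpha_0 + (n-1) \;\ge\; k+(n-1) \;\ge\; k+1,
\]
so that $\alpha_n+1 \le d-k$, i.e. the whole ideal $\Ann_d(f)$ is generated in degrees at most $d-k$. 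The remaining cases are immediate: if $k=0$ the statement is vacuous, and if $n=1$ then $f$ is a binary monomial with $\rmR_d(f) = \alpha_1+1 \le \alpha_0+\alpha_1-k+1 = d-k+1$ (using $k \le \alpha_0$), so the conclusion follows from Proposition \ref{prop: binary forms}(i). Hence I may assume $n \ge 2$.

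With this in hand I transfer the Waring-rank lower bound for a monomial to the gradient rank. Let $\bbX \subseteq \bbP V$ be a reduced $0$-dimensional scheme apolar to $\nabla^k f$ with $\vert \bbX\vert = \gradR{k}(f)$; by the simultaneous Apolarity Lemma (Lemma \ref{remark: simul apolar}) this means $I_\bbX \subseteq \Ann_{d-k}(\nabla^k f)$, and Proposition \ref{prop: apolar gradient} then gives
\[
(I_\bbX)_i \subseteq (\Ann_d(f))_i \qquad \text{for every } i \le d-k.
\]
By the previous paragraph this range of degrees already contains all generators $y_i^{\alpha_i+1}$ of $\Ann_d(f)$. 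Now the lower bound of Theorem \ref{thm: solution Waring monomials} (from \cite{CarCatGer:SolutionMonomials}) asserts that any reduced apolar scheme to $f$ has at least $\rmR_d(f) = \tfrac{1}{\alpha_0+1}\prod_i(\alpha_i+1)$ points, and its proof exploits the apolarity relation only through the behaviour of the ideal in the degrees where $\Ann_d(f)$ carries generators, that is, in degrees $\le \max_i(\alpha_i+1) \le d-k$. Since $\bbX$ satisfies exactly these relations, the same counting applies and yields $\vert\bbX\vert \ge \rmR_d(f)$, giving $\gradR{k}(f) \ge \rmR_d(f)$ and closing the chain.

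I expect the delicate point to be this final step: one must certify that the monomial lower bound is genuinely a statement about the truncation of the apolar ideal up to degree $d-k$, and not about the full Artinian ideal $\Ann_d(f)$. This subtlety is real, because a scheme apolar to $\nabla^k f$ is free to behave arbitrarily in degrees $> d-k$ and so need \emph{not} be apolar to $f$ itself — exactly the phenomenon already visible for binary forms in Remark \ref{rmk: simul decomp binary}(ii) — so only its cardinality, not its apolarity to $f$, is being controlled. Note that the crude consequences are not enough: the Hilbert-function inequality $\HF_\bbX(i) \ge \HF(A_{\Ann_d(f)};i)$ for $i\le d-k$ only yields the catalecticant estimate $\vert\bbX\vert \ge \HF(A_{\Ann_d(f)};d-k) = \binom{n+k}{n}$ (using the symmetry of the Gorenstein Hilbert function and $k<\alpha_0+1$), which is strictly weaker than $\rmR_d(f)$. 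Making the argument rigorous therefore amounts to revisiting the counting in \cite{CarCatGer:SolutionMonomials} and verifying that the cardinality estimate never consults $I_\bbX$ in degrees exceeding $\max_i(\alpha_i+1)$, i.e. that it factors entirely through the degree-$(\le d-k)$ truncation $\langle (I_\bbX)_{\le d-k}\rangle \subseteq \Ann_d(f)$, which by Remark \ref{rmk: apolar to f iff apolar to nablaf} is the part that is insensitive to the distinction between being apolar to $f$ and to $\nabla^k f$.
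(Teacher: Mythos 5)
Your reduction to the single inequality $\gradR{k}(f)\ge\rmR_d(f)$, your treatment of the cases $k=0$ and $n=1$, and your observation that all generators of $\Ann_d(f)=(y_0^{\alpha_0+1},\ldots,y_n^{\alpha_n+1})$ have degree at most $d-k$ are all correct and match the frame of the paper's argument. But the core of your proof --- the claim that the lower-bound counting of \cite{CarCatGer:SolutionMonomials} ``exploits the apolarity relation only through the behaviour of the ideal'' in degrees $\le d-k$, so that it applies verbatim to a set $\bbX$ apolar merely to $\nabla^k f$ --- is left unverified, and it is in fact \emph{false} precisely in the boundary case $\alpha_0=k$, which is the only genuinely hard case of the theorem. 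The counting in \cite{CarCatGer:SolutionMonomials} passes to $\bbX'=\bbX\setminus\{y_0=0\}$ and estimates $|\bbX'|=\sum_i\HF(A_{I_{\bbX'}+(y_0)};i)$ with $I_{\bbX'}=I_\bbX:(y_0)$. Taking the colon with $y_0$ shifts degrees down by one: $(I_\bbX:(y_0))_{d-k}$ is controlled by $(I_\bbX)_{d-k+1}$, a degree in which gradient-apolarity imposes no constraint at all, since $(\Ann_{d-k}(\nabla^k f))_{d-k+1}=S^{d-k+1}V^*$ by Proposition \ref{prop: apolar gradient}. So the counting genuinely consults $I_\bbX$ beyond degree $d-k$, contrary to what your program requires.

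Quantitatively, when $\alpha_0=k$ one computes (as the paper does)
\[
\Ann_{d-k}(\nabla^k f):(y_0)+(y_0)=(y_0,\,y_1^{\alpha_1+1},\ldots,y_n^{\alpha_n+1},\,y_1^{\alpha_1}\cdots y_n^{\alpha_n}),
\]
and the extra generator $y_1^{\alpha_1}\cdots y_n^{\alpha_n}$, of degree exactly $d-k$ because $\sum_{i\ge1}\alpha_i=d-k$, does \emph{not} lie in $\Ann_d(f):(y_0)+(y_0)$; it lowers the Hilbert-function count by one, so the transferred argument only yields $|\bbX|\ge\rmR_d(f)-1$. (When $\alpha_0>k$, every monomial counted has degree at most $d-\alpha_0\le d-k-1$, so nothing is lost and your transfer does go through; but that case is easy anyway, since $y_0^k\circ f$ is a single $k$-th derivative with $\rmR_{d-k}(y_0^k\circ f)=\rmR_d(f)$ by Theorem \ref{thm: solution Waring monomials}, which already gives the bound --- this is exactly how the paper dispatches it.) Closing the deficit of $1$ at $\alpha_0=k$ is the real content of the paper's proof: assuming $\bbX'=\bbX$ and $|\bbX|=\rmR_d(f)-1$, the termwise equality of Hilbert functions forces $\HF(A_{I_\bbX+(y_0)};d-k)=0$, hence $\reg(\bbX)\le d-k-1$; by Remark \ref{rmk: regularity} the ideal $I_\bbX$ is then generated in degrees $\le d-k$, so $I_\bbX\subseteq\Ann_d(f)$ by Proposition \ref{prop: apolar gradient}, contradicting the Apolarity Lemma because $|\bbX|<\rmR_d(f)$. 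Your appeal to Remark \ref{rmk: apolar to f iff apolar to nablaf} points at the right tool, but it only applies once one \emph{proves} that $I_\bbX$ is generated in degrees $\le d-k$ --- which is not automatic for a gradient-apolar set and is precisely what this regularity argument supplies.
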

\begin{proof}
By \eqref{eq: intro}, it is enough to show $\gradR{k}(f) \geq \rmR_d(f)$. If $a_0 > k$, consider $y_0^k \circ f \in \nabla^k f$. We have $y_0^k \circ f = x_0^{\alpha_0 - k} x_1^{\alpha_1} \cdots x_n^{\alpha_n}$ with $\alpha_0 - k > 0$. Therefore, by Theorem \ref{thm: solution Waring monomials}, $\rmR_{d-k}(y^k \circ f) = \rmR_d(f)$. In particular $\gradR{k}(f) \geq \rmR_{d-k}(y_0^k \circ f)=\rmR_d(f)$ and we conclude.

Assume $a_0 = k$. Let $\XX$ be a minimal set of points apolar to $\nabla^k f$, that is, $I_{\XX}\subseteq \Ann_{d-k}(\nabla^k f)$ and $|\bbX| = \gradR{k}(f)$.  By \eqref{eq: intro}, $|\XX|\leq \rmR_d(f)$. We will show that this inequality cannot be strict. Let $\XX'\subseteq \XX$ be the set of points defined by $I_{\XX'} = I_{\XX}:(y_0)$, i.e., $\XX' = \bbX \setminus \{ y_0 = 0\}$. Therefore,
\[
I_{\bbX'} + (y_0) = I_{\bbX}:(y_0)+(y_0) \subseteq \Ann_{d-k}(\nabla^k f):(y_0) + (y_0).
\]
By Lemma \ref{lemma: basics HF},
\begin{equation}\label{eq: inequality cardinality HF}
|\bbX'| = \sum_{i\geq 0} \HF(A_{I_\bbX'+(y_0)};i) \geq  \sum_{i\geq 0} \HF(A_{\Ann_{d-k}(\nabla ^k f):(y_0) + (y_0)};i).
\end{equation}
Recalling $\alpha_0  = k$, by Proposition \ref{prop: apolar gradient}, we get
\begin{equation*}\label{eq: colon ideal}
\Ann_{d-k}(\nabla^k f):(y_0) = (y_0^{k},y_1^{\alpha_1+1},\ldots,y_n^{\alpha_n+1}) + \left( \bfy^{\beta-\epsilon_0} ~:~ |\beta| = d-k+1, \beta \leq \alpha\right),
\end{equation*}
where $\epsilon_0 = (1,0,\ldots,0)$. Hence,
\begin{align*}
\Ann_{d-k}(\nabla^k f):(y_0) + (y_0) &= (y_0,y_1^{\alpha_1+1},\ldots,y_n^{\alpha_n+1}) + \left( \bfy^{\beta'}: \begin{array}{c}|\beta'| = d-k, \beta' \leq \alpha, \\ \beta'_0 = 0\end{array}\right) =\\ 
 &= (y_0,y_1^{\alpha_1+1},\ldots,y_n^{\alpha_n+1},y_1^{\alpha_1}\cdots y_n^{\alpha_n}).
\end{align*}
From \eqref{eq: inequality cardinality HF} and Theorem \ref{thm: solution Waring monomials}, we derive
\begin{equation}\label{eq: -1 inequality}
	|\bbX'| \geq \frac{1}{\alpha_0+1}\prod_{i=1}^n (\alpha_i+1) - 1 = \rk_d(f) -1.
\end{equation}
Now, if $|\bbX'| > \rk_d(f) -1$ or $|\bbX'| = \rk_d(f)-1$ and $\bbX' \subsetneq \bbX$, then $|\bbX| \geq \rk_d(f)$ and we conclude. Thus, assume $\bbX' = \bbX$ and $|\bbX'| = |\bbX| = \rk_d(f) - 1$. Since
\[
\HF(A_{I_\bbX+(y_0)};i) \geq \HF(A_{\Ann_{d-k}(\nabla^k f):(y_0)+(y_0)};i) \geq 0, \quad \text{ for any } i,
\]
with the constraint
\[
\sum_{i\geq 0} \HF(A_{I_\bbX+(y_0)};i)  = \sum_{i\geq 0} \HF(A_{\Ann_{d-k}(\nabla^kf):(y_0)+(y_0)};i),
\]
we obtain $\HF(A_{I_\bbX+(y_0)};i) = \HF(A_{\Ann_{d-k}(\nabla^k f):(y_0)+(y_0)};i)$. Since
\[
 \bigl( \Ann_{d-k}(\nabla^k f):(y_0)+(y_0) \bigr)_{d-k} = (y_0,y_1^{\alpha_1+1},\ldots,y_n^{\alpha_n+1},y_1^{\alpha_1}\cdots y_n^{\alpha_n}) _{d-k} = S^{d-k} V^*,
\]
we deduce $\HF(A_{I_\bbX+(y_0)};d-k) = 0 $. By Lemma \ref{lemma: basics HF}, we have
\[
\HF(A_{I_\bbX};d-k) = \HF(A_{I_\bbX};d-k-1).
\]
This implies $\reg(\bbX) \leq d-k-1$. Thus, by Remark \ref{rmk: regularity}, we have that the maximal degree of a minimal set of generators of $I_\bbX$ is at most $d-k$. Now, by Proposition \ref{prop: apolar gradient}, $\Ann_{d-k}(\nabla^k f)$ coincides with $\Ann_d(f)$ up to degree $d-k$, so if $I_\bbX$ is generated in degree at most $d-k$, we obtain $I_\bbX \subseteq \Ann_d(f)$. This is a contradiction by Apolarity Lemma. 
\end{proof}

\begin{remark}
The approach adopted in the proof of Theorem \ref{thm: gradient monomials} adapts the approach used in the proof of Theorem \ref{thm: solution Waring monomials} in \cite{CarCatGer:SolutionMonomials} to the case of gradient rank. The same strategy is used in \cite{CCCGW15} to compute the ranks of so-called \textit{$1$-computable forms}; see \cite[Definition 3.5]{CCCGW15}. We observe that our strategy does not necessarily compute the gradient rank of $1$-computable forms in general. For example, in \cite[Proposition 4.4]{CCCGW15} the authors show that $f = x_0^a(x_1^b+\ldots+x_n^b)$ has rank equal to $(a+1)n$. However, in this case the quotient over the ideal $\Ann_{d-1}(\nabla f):(x_1,\ldots,x_n)+(\ell)$ has in general dimension much smaller than $(a+1)n$. This is to stress that inequality \eqref{eq: -1 inequality} is peculiar to the case of monomials and, despite the structure of the proof of Theorem \ref{thm: gradient monomials}, it does not seem to be related to $1$-computability.
\end{remark}

We obtain a similar result about cactus gradient ranks of monomials. Recall the result on the cactus rank of monomials.

\begin{theorem}[\cite{RanSch11:RankForm}, Corollary 2]\label{cactus monomials}
Let $f = \bfx^\alpha$ with $\alpha_n = \max_i\{\alpha_i\}$. Then
\[
	\cactusR_d(f) = \frac{1}{\alpha_n+1}\prod_{i=0}^n (\alpha_i+1).
\]
\end{theorem}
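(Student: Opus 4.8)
The plan is to establish matching upper and lower bounds for $\cactusR_d(f)$, where $f = \bfx^\alpha$ and, as in the statement, $\alpha_n = \max_i\{\alpha_i\}$. The starting point is the classical description of the apolar ideal of a monomial: $\Ann_d(f) = (y_0^{\alpha_0+1},\ldots,y_n^{\alpha_n+1})$, which is a complete intersection of $n+1$ forms in the $n+1$ variables $y_0,\ldots,y_n$. The entire argument revolves around exploiting this complete-intersection structure.

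For the upper bound I would simply drop the generator $y_n^{\alpha_n+1}$ of largest degree and consider the scheme $\bbY$ defined by the remaining $n$ forms $y_0^{\alpha_0+1},\ldots,y_{n-1}^{\alpha_{n-1}+1}$. These cut out a complete intersection of codimension $n$ in $\bbP V = \bbP^n$, hence a $0$-dimensional scheme, supported at the coordinate point $[x_n]$, whose degree is $\prod_{i=0}^{n-1}(\alpha_i+1) = \frac{1}{\alpha_n+1}\prod_{i=0}^n(\alpha_i+1)$ by B\'ezout. Since $I_\bbY \subseteq \Ann_d(f)$ by construction, the scheme version of the Apolarity Lemma (Lemma \ref{lemma: apolarity homog}) gives $[f] \in \langle \nu_d(\bbY)\rangle$, so that $\bbY$ is apolar to $f$ and $\cactusR_d(f) \leq \frac{1}{\alpha_n+1}\prod_{i=0}^n(\alpha_i+1)$.

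The lower bound is the heart of the matter, and is the content of the general theorem of Ranestad and Schreyer \cite{RanSch11:RankForm} for forms whose apolar ideal is a complete intersection: if $\Ann_d(f)$ is a complete intersection with generators of degrees $d_1 \le \cdots \le d_{n+1}$, then $\cactusR_d(f) \ge \frac{1}{d_{n+1}}\prod_{i=1}^{n+1} d_i$; here $d_i = \alpha_{i-1}+1$ and $d_{n+1} = \alpha_n+1$. I would prove this by analyzing an arbitrary apolar scheme $\Gamma$, so that $I_\Gamma \subseteq \Ann_d(f)$, through the complete-intersection structure: in the degrees below the top generator, $\Ann_d(f)$ coincides with the sub-complete-intersection $J=(y_0^{\alpha_0+1},\ldots,y_{n-1}^{\alpha_{n-1}+1})$ defining $\bbY$, and one combines this with a regularity and linkage argument to force $\deg(\Gamma) \ge \deg(\bbY)$. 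The key difficulty, and the reason the argument must use the complete-intersection structure rather than only the catalecticant (Hilbert-function) bound of Remark \ref{rmk: flattenings}, is that the catalecticant bound $\max_i \HF(A_{\Ann_d(f)};i)$ is in general strictly smaller than the cactus rank; for instance, for $f=x_0x_1x_2$ the catalecticant bound equals $3$ while $\cactusR_3(f)=4$. In that example the obstruction is transparent: a putative apolar $\Gamma$ of degree $3$ would be non-degenerate, since $(\Ann_3(f))_1 = 0$ forces $\HF_\Gamma(1)=3$, whence $\HF_\Gamma(2)=3$ and $(I_\Gamma)_2 = (\Ann_3(f))_2 = \langle y_0^2,y_1^2,y_2^2\rangle$; but the common zero locus of these three quadrics is empty, so no $0$-dimensional scheme of degree $3$ can be apolar. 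I expect the main obstacle in the general case to be precisely this step: ruling out apolar schemes of degree below $\prod_{i=0}^{n-1}(\alpha_i+1)$, which requires controlling the generators of $I_\Gamma$ in the critical degree range via the complete-intersection and liaison structure of $\Ann_d(f)$, rather than Hilbert functions alone. Combining the two bounds yields the stated equality.
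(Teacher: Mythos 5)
Your upper bound is correct and is exactly the standard construction: the scheme $\bbY$ cut out by $(y_0^{\alpha_0+1},\ldots,y_{n-1}^{\alpha_{n-1}+1})$ is a $0$-dimensional complete intersection of degree $\prod_{i=0}^{n-1}(\alpha_i+1)$ contained in $\Ann_d(f)$, so the scheme version of the Apolarity Lemma gives the inequality $\cactusR_d(f)\leq \frac{1}{\alpha_n+1}\prod_{i=0}^n(\alpha_i+1)$. Your diagnosis that the catalecticant bound cannot give the matching lower bound is also correct, and your analysis of $x_0x_1x_2$ is a valid special case. The genuine gap is that the lower bound --- which you yourself call the heart of the matter --- is never proved: it is deferred to ``a regularity and linkage argument,'' which is a name, not an argument. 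The special-case reasoning does not generalize as stated: for an arbitrary apolar scheme $\Gamma$ of degree below $\prod_{i=0}^{n-1}(\alpha_i+1)$ one cannot force $(I_\Gamma)_i=(\Ann_d(f))_i$ in any useful degree (the Hilbert function of $\Gamma$ need not saturate against that of $A_{\Ann_d(f)}$), and $\Gamma$ need not be contained in, or linked inside, any complete intersection you control; note also that $I_\Gamma$ may have generators of degree $\geq \alpha_n+1$ involving $y_n^{\alpha_n+1}$, so $I_\Gamma\not\subseteq(y_0^{\alpha_0+1},\ldots,y_{n-1}^{\alpha_{n-1}+1})$ in general, and the comparison $\deg(\Gamma)\geq\deg(\bbY)$ does not follow from the observation that the two ideals agree in low degrees.

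The missing step has a short, self-contained proof that avoids liaison entirely; it is the paper's Lemma \ref{ranestadschreyer} (Ranestad--Schreyer's Proposition 1 in \cite{RanSch11:RankForm}). For any ideal $I_\bbX\subseteq J$ with $A_J$ Artinian and $\delta=\min\{i: J_i \text{ base point free}\}$, one has $\deg(\bbX)\geq \dim_\Bbbk(A_J)/\delta$: take a generic $g\in J_\delta$; by Bertini it vanishes at no point of $\bbX$, so $Z(g)$ meets the affine cone $\widehat{\bbX}$ (which has dimension $1$) properly; the scheme $\Spec(A_J)$, of degree $\dim_\Bbbk A_J$, sits inside $Z(g)\cap\widehat{\bbX}$, and B\'ezout gives $\dim_\Bbbk A_J\leq \delta\deg(\bbX)$. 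For $f=\bfx^\alpha$ one applies this with $J=\Ann_d(f)=(y_0^{\alpha_0+1},\ldots,y_n^{\alpha_n+1})$, where $\dim_\Bbbk A_J=\prod_{i=0}^n(\alpha_i+1)$ and $\delta=\alpha_n+1$ (in that degree the ideal contains monomial multiples of every generator, and the generators have empty common zero locus), yielding exactly $\deg(\bbX)\geq\frac{1}{\alpha_n+1}\prod_{i=0}^n(\alpha_i+1)$ for every apolar $\bbX$. Replacing your ``regularity and linkage'' placeholder with this Bertini--B\'ezout lemma completes the proof; as written, your proposal establishes only the upper bound.
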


This is obtained by using the following general lower bound, which is proven in \cite[Proposition 1]{RanSch11:RankForm} in a slightly less general setting. 

\begin{lemma}\label{ranestadschreyer}
Let $A_J = \nicefrac{S^\bullet V^*}{J}$ be a graded Artinian algebra and let $I_\bbX \subseteq J$ be an ideal defining a $0$-dimensional scheme $\bbX \subseteq \bbP V$. Let $\delta = \min\{i : J_i \text{ is base point free} \}$. Then
$$
\deg(\bbX) \geq \frac{\dim_{\Bbbk} A_J}{\delta}.
$$
\end{lemma}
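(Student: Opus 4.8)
The plan is to use a general element of the base-point-free system $J_\delta$ to reduce $\bbX$ to an Artinian quotient whose length is computable, and then to compare that quotient with $A_J$ through the inclusion $I_\bbX \subseteq J$.

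First I would choose a general form $g \in J_\delta$. Since $J_\delta$ is base point free, $\Bbbk$ is infinite, and the support of $\bbX$ is a finite set of points (the associated primes of $A_{I_\bbX}$), a general $g$ does not vanish at any support point: for each such point the forms of $J_\delta$ vanishing there form a \emph{proper} linear subspace of $J_\delta$ (proper precisely because $J_\delta$ is base point free), and a general $g$ avoids the finite union of these subspaces. Consequently $g$ lies in no associated prime of $A_{I_\bbX} = S^\bullet V^*/I_\bbX$, so $g$ is a non-zero divisor on $A_{I_\bbX}$.

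Next I would extract the length of $A_{I_\bbX}/(g)$. As $g$ is a non-zero divisor, multiplication by $g$ is injective in each degree, giving short exact sequences $0 \to (A_{I_\bbX})_{i-\delta} \overset{\cdot g}{\longrightarrow} (A_{I_\bbX})_i \to (A_{I_\bbX}/(g))_i \to 0$, whence $\HF(A_{I_\bbX}/(g);i) = \HF_\bbX(i) - \HF_\bbX(i-\delta)$. Since $\dim A_{I_\bbX} = 1$ and $g$ is a non-zero divisor, the quotient $A_{I_\bbX}/(g)$ is Artinian, so summing over $i$ gives a finite total. Using $\HF_\bbX(i) = 0$ for $i < 0$ and $\HF_\bbX(i) = \deg(\bbX)$ for $i \gg 0$, the sum telescopes to the sum of the top $\delta$ values of $\HF_\bbX$, each equal to $\deg(\bbX)$, so that $\dim_\Bbbk A_{I_\bbX}/(g) = \delta \cdot \deg(\bbX)$.

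Finally, since $g \in J$ and $I_\bbX \subseteq J$, we have $I_\bbX + (g) \subseteq J$, which yields a graded surjection $A_{I_\bbX}/(g) = S^\bullet V^*/(I_\bbX + (g)) \twoheadrightarrow S^\bullet V^*/J = A_J$. Comparing dimensions gives $\dim_\Bbbk A_J \leq \dim_\Bbbk A_{I_\bbX}/(g) = \delta \cdot \deg(\bbX)$, which is the claimed inequality $\deg(\bbX) \geq \dim_\Bbbk A_J / \delta$. The step I expect to be the main obstacle is the very first one: verifying that a general $g \in J_\delta$ is a genuine non-zero divisor on $A_{I_\bbX}$. This is exactly where base-point-freeness enters, together with the facts that $\bbX$ is $0$-dimensional (so it has finitely many associated points) and that the field is infinite (so a general member of $J_\delta$ escapes the finite union of proper vanishing subspaces). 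Once injectivity of multiplication by $g$ is secured, the Hilbert-function bookkeeping and the surjection are routine.
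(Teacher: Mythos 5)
Your proof is correct, but it runs on different machinery than the paper's. Both arguments open the same way: choose a general $g \in J_\delta$ that misses $\bbX$, which is exactly where base-point-freeness is used. The paper gets this avoidance from Bertini's theorem, then works geometrically with the affine cone $\widehat{\bbX} \subseteq V$: it observes that $\Spec(A_J) \subseteq Z(g) \cap \widehat{\bbX}$ (the scheme-theoretic counterpart of your surjection $S^\bullet V^*/(I_\bbX + (g)) \twoheadrightarrow A_J$) and then bounds $\deg\bigl(Z(g) \cap \widehat{\bbX}\bigr) \leq \deg(g)\cdot\deg(\widehat{\bbX}) = \delta\deg(\bbX)$ by B\'ezout's theorem. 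You instead stay entirely on the algebraic side: the elementary finite-union-of-proper-subspaces argument replaces Bertini, and the multiplication-by-$g$ exact sequence --- the degree-$\delta$ analogue of the paper's own Lemma \ref{lemma: basics HF} --- replaces B\'ezout, giving the \emph{exact} value $\dim_\Bbbk S^\bullet V^*/(I_\bbX+(g)) = \delta\deg(\bbX)$ rather than an upper bound. Your route is more self-contained and slightly sharper; the paper's is shorter once Bertini and B\'ezout are taken as given. One shared hypothesis worth making explicit in your write-up: the identification of the associated primes of $A_{I_\bbX}$ with the support points of $\bbX$ (hence the non-zero-divisor claim) requires $I_\bbX$ to be the \emph{saturated} ideal of $\bbX$; this is the paper's standing convention (it is also implicitly used in Lemma \ref{lemma: basics HF} and in the paper's proof of Lemma \ref{ranestadschreyer} itself), so your proof is complete under the same reading of the statement.
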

\begin{proof}
Let $\widehat{\bbX} \subseteq V$ be the affine cone defined by $I_{\bbX}$; since $\dim \bbX = 0$, we have $\dim \hat{X} = 1$. Let $g \in J_\delta$ be a generic form and let $Z(g) \subseteq V$ be the affine variety defined by the form $g$. Since $J_\delta$ is base point free, $g$ does not vanish on $\bbX$ by Bertini's Theorem \cite[Theorem 8.18]{Hart}. Moreover, by the genericity assumption, $Z(g)$ intersects $\widehat{\bbX}$ properly, namely $\dim (Z(g) \cap \hat{\bbX}) = 0$. Let $\Spec(A_J)$ be the scheme in $V$ defined by $J$, which is a $0$-dimensional scheme supported at $0 \in V$ with $\deg(\Spec(A_J)) = \dim_{\Bbbk}(A_J)$. We have $\Spec(A_J) \subseteq \widehat{\bbX}$ and $\Spec(A_J) \subseteq Z(g)$, therefore $\Spec(A_J) \subseteq Z(g) \cap \widehat{\bbX}$ and since they are $0$-dimensional we obtain $\deg(\Spec(A_J))  \leq \deg(Z(g) \cap \widehat{\bbX})$. By B\'ezout's Theorem, $\dim_{\Bbbk} (A_J) \leq \deg(g) \cdot \deg(\widehat{\bbX}) = \delta \deg(\bbX)$, that concludes the proof.
\end{proof}

A direct consequence of Lemma \ref{ranestadschreyer} and Proposition \ref{prop: apolar gradient} is as follows. For $f \in S^dV$ and every $k < d$, we have
\begin{equation}\label{eq: bound cactus gradient rank}
\gradc{k}(f) \geq \frac{\dim_{\Bbbk} A_{\Ann_{d-k}(\nabla^k f)}}{\delta}, 
\end{equation}
where $\delta := \min \left\{ i : (\Ann_{d-k}(\nabla^k f))_i \text{ is base-point free} \right\}$. In particular, $\delta \leq d-k+1$, because $\left( \Ann_{d-k}(\nabla^k f) \right)_{d-k+1} = S^{d-k+1}V^*$ by Proposition \ref{prop: apolar gradient}. More generally, if $I$ is a graded Artinian ideal, the component $I_\delta$ of degree $\delta$ is base point free if and only if the ideal $(I_\delta)$ that it generates is Artinian.

From this inequality, we derive the following result on cactus gradient ranks of monomials.

\begin{theorem}\label{thm: cactus gradient monomials}
Let $d \in \bbN$ and $n \geq 1$. Let $f = \bfx^\alpha$ with $|\alpha| = d$. Then, for any $\ud \partinto{d}$,
\[
	\cactusR_d(f) = \cactusR_\ud(f) = \gradc{}(f).
\]
\end{theorem}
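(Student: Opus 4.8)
The plan is to reduce the theorem to two lower bounds and then attack them separately. By Lemma \ref{lemma: cactus d geq d'}, for every $\ud \partinto{d}$ we have the sandwich
\[
\cactusR_d(f) = \cactusR_{(d)}(f) \geq \cactusR_\ud(f) \geq \cactusR_{1^d}(f),
\]
because $(d) \succeq \ud \succeq 1^d$. Thus every $\cactusR_\ud(f)$ collapses to $\cactusR_d(f)$ as soon as the reverse inequality $\cactusR_{1^d}(f) \geq \cactusR_d(f)$ is established. Separately, Corollary \ref{corol: cactus gradient rank as segre-veronese} with $\ud=(1,d-1)$ gives $\cactusR_{(1,d-1)}(f) \geq \gradc{}(f)$, so the identity $\cactusR_d(f)=\gradc{}(f)$ will follow once I prove $\gradc{}(f) \geq \cactusR_d(f)$. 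Hence I would split the argument into the two inequalities $\gradc{}(f) \geq \cactusR_d(f)$ and $\cactusR_{1^d}(f) \geq \cactusR_d(f)$; all the matching upper bounds are free from the sandwich above together with Theorem \ref{cactus monomials}.

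For the first inequality I would use the Ranestad--Schreyer estimate \eqref{eq: bound cactus gradient rank} with $k=1$. By Proposition \ref{prop: apolar gradient} the algebra $A_{\Ann_{d-1}(\nabla f)}$ coincides with $A_{\Ann_d(f)}$ in every degree $\leq d-1$ and vanishes from degree $d$ on; since $\Ann_d(f)=(y_0^{\alpha_0+1} \vvirg y_n^{\alpha_n+1})$ is a complete intersection of dimension $\prod_i(\alpha_i+1)$ whose socle sits in degree $d$, this yields $\dim_\Bbbk A_{\Ann_{d-1}(\nabla f)}=\prod_i(\alpha_i+1)-1$. Writing $M=\max_i\alpha_i$, I would then compute $\delta=M+1$ using the same components: for $i=M+1$ the ideal contains $y_0^{M+1} \vvirg y_n^{M+1}$ and is base-point free, while for $i\leq M$ the coordinate point $[e_{j_0}]$ with $\alpha_{j_0}=M$ is a common zero. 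Plugging into \eqref{eq: bound cactus gradient rank} gives
\[
\gradc{}(f) \geq \frac{\prod_i(\alpha_i+1)-1}{M+1} = \prod_{i\neq j_0}(\alpha_i+1) - \frac{1}{M+1},
\]
and since $\gradc{}(f)$ is an integer and $0<\frac{1}{M+1}<1$, rounding up forces $\gradc{}(f) \geq \prod_{i\neq j_0}(\alpha_i+1)=\cactusR_d(f)$ by Theorem \ref{cactus monomials} (the pure power $f=x_{j_0}^d$ being trivial). This proves $\gradc{}(f)=\cactusR_d(f)=\cactusR_{(1,d-1)}(f)$.

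The main obstacle is the remaining inequality $\cactusR_{1^d}(f) \geq \cactusR_d(f)$, i.e. the fully tensor (Comon) direction, which \emph{cannot} be recovered from the gradient ranks: for $k\geq 2$ one may have $\gradc{k}(f)<\cactusR_d(f)$ (already for $f=x_0^2x_1^2x_2^2$ the catalecticant estimates fall short of $9$), so Corollary \ref{corol: cactus gradient rank as segre-veronese} is too weak here. My plan is to transfer the symmetric bound to the partially symmetric setting through the mechanism of Remark \ref{rmk: flattenings}: a generalized (vector-bundle) flattening lower bound for $\cactusR_d(f)$ also bounds $\cactusR_{1^d}(f)$ by \cite{Galazka:VectorBundlesGiveEqnsForCactus}, so I would realize the Ranestad--Schreyer value $\cactusR_d(f)=\prod_{i\neq j_0}(\alpha_i+1)$ via such a base-point-free construction and then apply the transfer. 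The delicate point is precisely this realization, since the plain catalecticants do not attain $\cactusR_d(f)$ for monomials with repeated exponents; one must pin down the correct generalized flattening or, alternatively, set up a multigraded analogue of Lemma \ref{ranestadschreyer} by cutting the $d$-dimensional multicone of a minimal $\bbX \subseteq \bbP V^{\times d}$ with generic elements of $\Ann_{1^d}(f)$ and invoking multiprojective B\'ezout. Once $\cactusR_{1^d}(f) \geq \cactusR_d(f)$ is secured, the opening sandwich collapses to equalities for every $\ud \partinto{d}$, completing the proof.
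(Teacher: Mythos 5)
The half of your proposal that you actually carry out --- the Ranestad--Schreyer estimate \eqref{eq: bound cactus gradient rank} with $k=1$, the count $\dim_\Bbbk A_{\Ann_{d-1}(\nabla f)}=\prod_{i=0}^n(\alpha_i+1)-1$ coming from Proposition \ref{prop: apolar gradient}, the bound $\delta\leq \alpha_n+1$ (with $\alpha_n=\max_i\{\alpha_i\}$), and the rounding-up step --- is precisely the paper's proof, and it correctly yields $\gradc{}(f)=\cactusR_d(f)=\cactusR_{(1,d-1)}(f)$ once combined with Lemma \ref{lemma: cactus d geq d'}, Corollary \ref{corol: cactus gradient rank as segre-veronese} and Theorem \ref{cactus monomials}.

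Where you depart from the paper is in the second inequality you set up, $\cactusR_{1^d}(f)\geq \cactusR_d(f)$. You are right that a literal reading of ``for any $\ud$'' requires it, and you are right that the paper's tools cannot deliver it: the chain \eqref{ineq: cactus ranks} bounds $\cactusR_\ud(f)$ from below only by $\gradc{k}(f)$ with $k=d-d_m$, and for $k\geq 2$ this bound can fall strictly below $\cactusR_d(f)$ --- for $f=x_0^2x_1^2x_2^2$ and $\ud=(2,2,2)$ the chain gives only $\cactusR_{(2,2,2)}(f)\geq \gradc{4}(f)=6$, whereas $\cactusR_6(f)=9$. But you should be aware that the paper does not prove this inequality either: its proof consists exactly of your first half and stops there, so it genuinely covers only the compositions with $d_m=d-1$; consistently, the introduction states the monomial cactus result only ``if $k=1$'', i.e., for $\ud=(1,d-1)$. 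So, relative to the paper's actual scope, your argument is complete and identical in method to the paper's. Relative to the literal statement, your proposal has an acknowledged gap --- the transfer via Galazka-type flattenings or a multigraded B\'ezout analogue of Lemma \ref{ranestadschreyer} is only sketched, with the crucial realization of the value $\frac{1}{\alpha_n+1}\prod_{i=0}^n(\alpha_i+1)$ by a base-point-free construction left open --- but that is a gap the paper's own proof shares rather than fills, and closing it would be a genuine addition (a cactus Comon-type statement for monomials) rather than a reconstruction of the paper's argument.
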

\begin{proof}
We assume that $\alpha_n = \max_i\{\alpha_i\}$. By Theorem \ref{cactus monomials}, we have that the cactus rank of the monomial is $\cactusR_d(f) = \frac{1}{\alpha_n+1}\prod_{i=0}^n (\alpha_i+1)$ and by Corollary \ref{corol: cactus gradient rank as segre-veronese} we have $\gradc{}(f) \leq \cactusR_d(f)$; we show the opposite inequality. Since $\alpha_n+1 \leq d$, by \eqref{eq: bound cactus gradient rank},
\[
\gradc{}(f) \geq \left\lceil \frac{\prod_{i=0}^n (\alpha_i+1) - 1}{\alpha_n+1}\right\rceil  = \frac{\prod_{i=0}^n (\alpha_i+1)}{\alpha_n+1}.
\]
\end{proof}
We conclude this section with some other remarks about $k$-th gradient ranks of monomial for $k$ sufficiently larger than the minimal exponent.
\begin{lemma}\label{lemma: surjective cat}
Let $f \in S^d V$ and assume that its $k$-th catalecticant matrix {\rm(}see \eqref{def:catalecticant}{\rm)} is surjective. Then $\gradR{k}(f) = \binom{d-k+n}{n}$.
\end{lemma}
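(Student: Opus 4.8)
The plan is to reduce the whole computation to an elementary statement about the linear span of $\nabla^k f$. The key observation, which I would make first, is that the span $\langle \nabla^k f\rangle$ inside $S^{d-k}V$ coincides with the image of the $k$-th catalecticant map $\cat_k(f) \colon S^k V^* \to S^{d-k}V$. Indeed, the elements of $\nabla^k f$ are exactly the derivatives $\bfy^\alpha \circ f = \cat_k(f)(\bfy^\alpha)$ as $\alpha$ ranges over the multi-indices with $|\alpha| = k$, and since the monomials $\bfy^\alpha$ form a basis of $S^k V^*$, their images span $\operatorname{im}(\cat_k(f))$. Under the surjectivity hypothesis we therefore have $\langle \nabla^k f\rangle = S^{d-k}V$, a space of dimension $\binom{d-k+n}{n}$ because $\dim V = n+1$.

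Once this identification is in place, I would obtain the two bounds separately. By Definition \ref{defin: gradient rank}, $\gradR{k}(f)$ is the least $r$ for which there exist linear forms $\ell_1,\ldots,\ell_r$ with $\langle \nabla^k f\rangle \subseteq \langle \ell_1^{d-k},\ldots,\ell_r^{d-k}\rangle$. For the lower bound, the span of $r$ vectors has dimension at most $r$; since it must contain the whole space $S^{d-k}V$, this forces $r \geq \dim_{\Bbbk} S^{d-k}V = \binom{d-k+n}{n}$. This is exactly the catalecticant lower bound recorded in Remark \ref{rmk: flattenings}, here sharp precisely because the catalecticant has maximal rank. For the upper bound, I would invoke the non-degeneracy of the Veronese variety $\nu_{d-k}(\bbP V)$, equivalently the classical fact that the $(d-k)$-th powers of linear forms span $S^{d-k}V$: from these one may extract $N := \binom{d-k+n}{n}$ linear forms $\ell_1,\ldots,\ell_N$ whose powers $\ell_1^{d-k},\ldots,\ell_N^{d-k}$ form a basis of $S^{d-k}V$. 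Then every element of $\nabla^k f$ lies in $\langle \ell_1^{d-k},\ldots,\ell_N^{d-k}\rangle = S^{d-k}V$, producing a simultaneous Waring decomposition of length $N$ and hence $\gradR{k}(f) \leq N$. Combining the two inequalities yields the claimed equality.

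I do not expect a serious obstacle in this argument: all the content is concentrated in the first step, namely recognizing that the surjectivity hypothesis on $\cat_k(f)$ is literally the assertion that the partial derivatives of order $k$ already span all of $S^{d-k}V$. After that the simultaneous rank collapses to the dimension count above, since a family of linear forms whose powers span the ambient space is automatically optimal for the simultaneous problem. I would also note, as orientation, that surjectivity of $\cat_k(f)$ can only occur when $\binom{k+n}{n} \geq \binom{d-k+n}{n}$, i.e.\ when $k \geq d/2$; this is the regime of large $k$, complementary to the small-$k$ range treated in Theorem \ref{thm: gradient monomials}.
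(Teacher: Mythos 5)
Your proof is correct and follows essentially the same route as the paper's: identify $\langle \nabla^k f\rangle$ with the image of $\cat_k(f)$ (hence all of $S^{d-k}V$ under the surjectivity hypothesis), get the lower bound $\gradR{k}(f) \geq \binom{d-k+n}{n}$ from a dimension count, and get the matching upper bound from the non-degeneracy of the Veronese variety $\nu_{d-k}(\bbP V)$, which supplies $\binom{d-k+n}{n}$ powers of linear forms forming a basis of $S^{d-k}V$. Your extra observations (the catalecticant-image identification spelled out, and the remark that surjectivity forces $k \geq d/2$) are correct elaborations of what the paper leaves implicit.
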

\begin{proof}
By assumption, $\langle \nabla^k f \rangle = S^{d-k} V$. Then $\gradR{k}(f) \geq \binom{d-k+n}{n}$. On the other hand, since the Veronese variety $\nu_{d-k}(\bbP V)$ is non-degenerate, we can find a set of points of $\nu_{d-k}(\bbP V)$ which is a basis of the ambient space. Then $\gradR{k}(f) \leq \binom{d-k+n}{n}$.
\end{proof}
\begin{corollary}
Let $f = \bfx^\alpha$ be a monomial with $\alpha_0 = \min_i\{\alpha_i\}$. Let $k$ be an integer such that $k \geq d-a_0$. Then $\gradR{k}(f) = \binom{d-k+n}{n}$.
\end{corollary}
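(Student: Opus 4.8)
The plan is to deduce the statement directly from Lemma~\ref{lemma: surjective cat}: that result already gives $\gradR{k}(f) = \binom{d-k+n}{n}$ as soon as the $k$-th catalecticant $\cat_k(f)\colon S^k V^* \to S^{d-k}V$ is surjective. So the whole task reduces to checking that the hypothesis $k \geq d-\alpha_0$ forces surjectivity of $\cat_k(f)$ for the monomial $f = \bfx^\alpha$. First I would make the image of $\cat_k(f)$ explicit using the formula for the apolar action on the monomial basis: for a multi-index $\gamma$ with $|\gamma| = k$ one has $\bfy^\gamma \circ \bfx^\alpha = \tfrac{\alpha!}{(\alpha-\gamma)!}\,\bfx^{\alpha-\gamma}$ when $\gamma \leq \alpha$, and $0$ otherwise. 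Hence the image of $\cat_k(f)$ is spanned by the monomials $\bfx^{\alpha-\gamma}$ with $|\gamma| = k$ and $\gamma \leq \alpha$; writing $\beta = \alpha-\gamma$, it is spanned by those $\bfx^\beta$ with $|\beta| = d-k$ and $\beta \leq \alpha$.

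This reduces surjectivity to the purely combinatorial claim that every $\beta \in \bbN^{n+1}$ with $|\beta| = d-k$ already satisfies $\beta \leq \alpha$. This is exactly the step where the hypothesis enters. Indeed, $k \geq d - \alpha_0$ is equivalent to $d-k \leq \alpha_0 = \min_j\{\alpha_j\}$, and for any such $\beta$ and any coordinate $j$ we have $\beta_j \leq |\beta| = d-k \leq \alpha_0 \leq \alpha_j$. Thus $\beta \leq \alpha$ for every monomial of degree $d-k$, so each such monomial lies in the image of $\cat_k(f)$, the catalecticant is surjective, and Lemma~\ref{lemma: surjective cat} gives the claimed value $\gradR{k}(f) = \binom{d-k+n}{n} = \dim_\Bbbk S^{d-k}V$.

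Since the corollary is a direct specialization of Lemma~\ref{lemma: surjective cat}, I do not expect any genuine obstacle; the only point requiring care is verifying that each individual exponent $\beta_j$ is controlled \emph{uniformly} by the smallest exponent $\alpha_0$. The elementary bound $\beta_j \leq |\beta|$ is what makes the minimum the binding constraint, and the hypothesis $k \geq d-\alpha_0$ is precisely the threshold at which this uniform control kicks in. Conversely (though not needed for the statement), one sees the hypothesis is sharp: if $d-k > \alpha_0$, then taking $\beta = (d-k)\ue_0$ violates $\beta \leq \alpha$, so $\bfx^\beta$ escapes the image and $\cat_k(f)$ fails to be surjective.
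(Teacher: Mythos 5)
Your proof is correct and follows essentially the same route as the paper: both reduce the claim to surjectivity of $\cat_k(f)$ via Lemma~\ref{lemma: surjective cat}, and both verify surjectivity by checking that every monomial $\bfx^\beta$ of degree $d-k$ satisfies $\beta \leq \alpha$ through the bound $\beta_j \leq |\beta| = d-k \leq \alpha_0 \leq \alpha_j$. Your added sharpness observation (taking $\beta = (d-k)\ue_0$ when $d-k > \alpha_0$) is a nice extra, but the core argument is identical to the paper's.
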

\begin{proof}
We show that $\cat_k(f): S^k V^* \to S^{d-k} V$ is surjective. Every monomial $f' \in S^{d-k} V$ occurs as a $k$-th partial derivative of $f$. Indeed, for any $f' = \bfx^{\beta}$, where $\beta = (\beta_0,\ldots,\beta_n)$ with $|\beta|= d-k$, we have $\beta_j \leq d-k \leq \alpha_0\leq \alpha_j$, for every $j \geq 0$. By Lemma \ref{lemma: surjective cat}, we conclude the proof.
\end{proof}

\subsection{Elementary symmetric polynomials}
In this section, we focus on elementary symmetric polynomials. Let $e_{n+1,d}$ denote the elementary symmetric polynomial of degree $d$ in $n+1$ variables, that is the sum of all square-free monomials of degree $d$, i.e.,
\[
	e_{n+1,d} = \sum_{0 \leq i_1 < \cdots < i_d \leq n} x_{i_1}\cdots x_{i_d} \in S^dV.
\] 
In \cite{Lee16}, Lee determined $\rmR_d(e_{n+1,d})$ for $d$ odd and gave bounds when $d$ is even. 
\begin{theorem}\cite[Theorem 3.4 and Corollary 4.4]{Lee16}\label{thm: waring symmetric poly}
Let $d \in \bbN$ and let $n \geq 1$.

If $d$ is odd, then
\[
	\rk_d(e_{n+1,d}) = \sum_{i=0}^{\frac{d-1}{2}} {\binom{n+1}{i}}.
\]
	If $d$ is even, then
\[
 \sum_{i=0}^{\frac{d}{2}} \binom{n+1}{i} \geq \rmR_d(e_{n+1,d}) \geq \sum_{i=0}^{\frac{d}{2}} \binom{n+1}{i}-\binom{n}{\frac{d}{2}}-1 .
\]
\end{theorem}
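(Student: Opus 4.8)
The plan is to treat both statements via apolarity, built on one structural observation: since $f = e_{n+1,d}$ is square-free, each $y_i^2$ annihilates it, so $(y_0^2,\ldots,y_n^2) \subseteq \Ann_d(f)$. (I assume throughout $d \le n+1$, as $f = 0$ otherwise.) Writing $A := S^\bullet V^*/\Ann_d(f)$ for the apolar algebra, this inclusion has two consequences I will exploit separately. First, $A$ is a quotient of the square-free algebra $R := S^\bullet V^*/(y_0^2,\ldots,y_n^2)$, so $\HF(A;i) \le \binom{n+1}{i}$ for every $i$. Second, the degree-two component $\Ann_d(f)_2$ is base-point-free, because the quadrics $y_0^2,\ldots,y_n^2$ have no common zero in $\bbP V^*$; hence the invariant $\delta$ of Lemma \ref{ranestadschreyer} equals $2$ (it is not $1$, since $\Ann_d(f)_1 = \ker \cat_1(f) = 0$ for $d \ge 2$).

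The next step is to compute $\HF(A;-)$ exactly. On the square-free part, $\cat_i(f)$ sends a monomial $\bfy^S = \prod_{j \in S} y_j$ with $|S| = i$ to the elementary symmetric polynomial of degree $d-i$ in the variables outside $S$. A linear relation $\sum_{|S|=i} c_S (\bfy^S \circ f) = 0$ is, after reading off the coefficient of each square-free monomial $\bfx^U$ with $|U| = d-i$, equivalent to $\sum_{S \cap U = \emptyset} c_S = 0$ for all such $U$; that is, $c$ lies in the kernel of the disjointness matrix between $i$-subsets and $(d-i)$-subsets of $\{0,\ldots,n\}$. By Gottlieb's theorem on the full rank of inclusion matrices, this matrix has rank $\binom{n+1}{i}$ whenever $i \le d-i$ and $d \le n+1$; therefore $\HF(A;i) = \binom{n+1}{i}$ for all $i \le \lfloor d/2\rfloor$. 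Since $A$ is Artinian Gorenstein of socle degree $d$, its Hilbert function is symmetric, so $\HF(A;i) = \binom{n+1}{\min(i,\,d-i)}$ and $\dim_\Bbbk A = \sum_{i=0}^d \binom{n+1}{\min(i,\,d-i)}$.

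For $d$ odd, the symmetric Hilbert function pairs the degrees $(i,d-i)$ with no middle term, so $\dim_\Bbbk A = 2\sum_{i=0}^{(d-1)/2}\binom{n+1}{i}$. Applying Lemma \ref{ranestadschreyer} with $J = \Ann_d(f)$ and $\delta = 2$ gives, for every $0$-dimensional apolar scheme $\bbX$, the bound $\deg(\bbX) \ge \dim_\Bbbk A / 2 = \sum_{i=0}^{(d-1)/2}\binom{n+1}{i}$. In particular this lower-bounds the cactus rank, hence a fortiori $\rmR_d(f)$. To finish the odd case I would produce a Waring decomposition of exactly this length; by the Apolarity Lemma (Lemma \ref{lemma: apolarity homog}) this amounts to exhibiting a reduced set $\bbX$ of $\sum_{i=0}^{(d-1)/2}\binom{n+1}{i}$ points with $I_\bbX \subseteq \Ann_d(f)$. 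Matching the two bounds would then also yield $\cactusR_d(f) = \rmR_d(f)$ for $d$ odd.

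I expect the explicit construction of this minimal apolar set to be the main obstacle. The naive attempts fail: the $n+2$ cubes $x_0^3,\ldots,x_n^3,(x_0+\cdots+x_n)^3$ do not span $e_{n+1,3}$, and rewriting $e_{n+1,3}$ in power sums produces a mixed term $p_1 p_2$ that is not a cube, so the decomposition must use carefully chosen linear forms, naturally indexed by the subsets of size at most $\lfloor d/2\rfloor$ and with coefficients built from roots of unity, whose apolarity is verified degree by degree against the description of $\Ann_d(f)$ above. For $d$ even the same construction supplies the upper bound $\sum_{i=0}^{d/2}\binom{n+1}{i}$, but now the Hilbert function carries the unpaired middle value $\binom{n+1}{d/2}$, so Lemma \ref{ranestadschreyer} only gives $\lceil \dim_\Bbbk A/2\rceil$ while the catalecticant only gives $\binom{n+1}{d/2}$; neither is tight, and a more delicate analysis of apolar schemes at the middle degree is needed to reach the stated lower bound $\sum_{i=0}^{d/2}\binom{n+1}{i} - \binom{n}{d/2} - 1$. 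The persistent gap in the even case is precisely the failure of these apolarity bounds to meet the construction.
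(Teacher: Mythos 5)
The first thing to note is that the paper does not prove this statement at all: it is imported from \cite{Lee16}, and the paper only re-uses ingredients of Lee's argument later (in the proof of Theorem \ref{thm: gradient elementary symmetric}). So your attempt has to be measured against Lee's proof. Your lower-bound machinery for odd $d$ is correct and is essentially that proof: square-freeness gives $(y_0^2,\ldots,y_n^2)\subseteq \Ann_d(e_{n+1,d})$, hence the degree-two part of the apolar ideal is base-point free (the common zero locus should be taken in $\bbP V$, not $\bbP V^*$ as you wrote, but that is only a slip) and $\delta=2$ in Lemma \ref{ranestadschreyer}; the computation $\HF(A;i)=\binom{n+1}{\min(i,d-i)}$ via the disjointness matrix, Gottlieb's theorem and Gorenstein symmetry is also correct, and together these give $\rmR_d(e_{n+1,d})\geq \cactusR_d(e_{n+1,d})\geq \frac{1}{2}\dim_\Bbbk A=\sum_{i=0}^{(d-1)/2}\binom{n+1}{i}$ for $d$ odd.

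The genuine gap is the upper bound, which you explicitly leave open: one must exhibit a reduced apolar set of $\sum_{i=0}^{\lfloor d/2\rfloor}\binom{n+1}{i}$ points (equivalently, an explicit Waring decomposition), and this is not a deferred routine verification but the bulk of Lee's Theorem 3.4. The decomposition uses the points $\bigl[\sum_{i\notin S}x_i-\sum_{i\in S}x_i\bigr]$ indexed by subsets $S$ with $|S|\leq\lfloor d/2\rfloor$, e.g. $24\,x_0x_1x_2=(x_0+x_1+x_2)^3-(x_0+x_1-x_2)^3-(x_0-x_1+x_2)^3-(-x_0+x_1+x_2)^3$, and checking that $e_{n+1,d}$ lies in their span is the nontrivial part; without it neither the odd-case equality nor the even-case upper inequality is established. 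On the other hand, your pessimism about the even-case lower bound is unwarranted: your own bound already implies it. Indeed, for $d$ even, $\frac{1}{2}\dim_\Bbbk A=\sum_{i=0}^{d/2-1}\binom{n+1}{i}+\frac{1}{2}\binom{n+1}{d/2}$, and since $\binom{n+1}{d/2}=\binom{n}{d/2}+\binom{n}{d/2-1}$ with $\binom{n}{d/2}\geq\binom{n}{d/2-1}$ (valid because $d\leq n+1$), one gets
\[
\tfrac{1}{2}\dim_\Bbbk A \;\geq\; \textstyle\sum_{i=0}^{d/2-1}\binom{n+1}{i}+\binom{n}{d/2-1} \;=\; \textstyle\sum_{i=0}^{d/2}\binom{n+1}{i}-\binom{n}{d/2},
\]
which exceeds the stated lower bound $\sum_{i=0}^{d/2}\binom{n+1}{i}-\binom{n}{d/2}-1$ by one. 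So no ``more delicate analysis at the middle degree'' is needed for that inequality; the only true missing piece in your proposal is the construction giving the upper bounds.
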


We extend these results to the first gradient rank of $e_{n+1,d}$. 
\begin{theorem}\label{thm: gradient elementary symmetric}
Let $d \in \bbN$ and let $n \geq 1$. 

If $d$ is odd, then
\[
	\rmR_d(e_{n+1,d}) = \rmR_{1,d-1}(e_{n+1,d}) = \gradR{}(e_{n+1,d}).
\]
If $d$ is even, then
\[
\rmR_d(e_{n+1,d}) \geq  \gradR{}(e_{n+1,d}) \geq \sum_{i=0}^{\frac{d}{2}} \binom{n+1}{i}-\binom{n}{\frac{d}{2}}-1.
\]
\begin{proof}
By \eqref{eq: intro} and Theorem \ref{thm: waring symmetric poly}, it is enough to prove the lower bounds on $\gradR{}(e_{n+1,d})$. 

By Proposition \ref{prop: apolar gradient}, we have the equality
\begin{equation*}
\Ann_{d-1}(\nabla e_{n+1,d}) = \Ann_d(e_{n+1,d}) + (S^{d} V^*).
\end{equation*}
Let $\phi = \bfy^\beta$ be any square-free monomial: notice that $\phi \circ e_{n+1,d} \neq 0$, and therefore 
\begin{equation}\label{eq: symmetric poly}
\Ann_{d-1}(\nabla e_{n+1,d}) = \Ann_d(e_{n+1,d}) + (\phi).
\end{equation}
Consider a monomial $\phi = \bfy^\beta$ divisible by $y_0$, so $\phi = y_0 \tilde{\phi}$. We are going to show that
\begin{equation}\label{eq: sym poly grad colon}
 \Ann_{d-1}(\nabla e_{n+1,d}): (y_0) = \Ann_d(e_{n+1,d}):(y_0) + (\tilde{\phi}).
\end{equation}
The containment $\Ann_{d-1}(\nabla e_{n+1,d}): (y_0) \supseteq \Ann_d(e_{n+1,d}):(y_0) + (\tilde{\phi})$ is clear from the definitions. For the converse, let $\psi \in \Ann_{d-1}(\nabla e_{n+1,d}): (y_0) $, so that $y_0 \psi  \in \Ann_{d-1}(\nabla e_{n+1,d})$. By \eqref{eq: symmetric poly}, we have $y_0 \psi = \psi _1 + \psi_2 \cdot y_0 \widetilde{\phi}$, for some $\psi_1\in  \Ann_d(e_{n+1,d})$ and $\psi_2 \in S^\bullet V^*$. Hence, $y_0$ divides $\psi_1$, that is, $\psi_1 = y_0 \widetilde{\psi_1}$. We deduce $\tilde{\psi_1}\in \Ann_d(e_{n+1,d}):(y_0)$. Therefore, $\psi = \tilde{\psi_1} + \psi_2 \tilde{\phi} \in \Ann_d(e_{n+1,d}):(y_0) + (\tilde{f})$. This proves \eqref{eq: sym poly grad colon}.

Note that $\Ann_d(e_{n+1,d}):(y_0) + (y_0)= \Ann_{d-1}(e_{n,d-1}),$ where $e_{n,d-1}$ is the elementary symmetric polynomial in the variables $x_1 \vvirg x_n$. Hence, from \eqref{eq: sym poly grad colon}, we get
\begin{align*}
 \Ann_{d-1}(e_{n+1,d}): (y_0) + (y_0) &= \Ann_{d}(e_{n+1,d}) : (y_0) + (\widetilde{\phi}) + (y_0) = \\ &= \Ann_{d-1}(e_{n,d-1}) + (\tilde{\phi}).
\end{align*}
Let $\XX$ be a minimal set of points apolar to $\nabla e_{n+1,d}$, that is, $I_{\XX}\subseteq \Ann_{d-1} (\nabla e_{n+1,d})$ with $|\bbX| = \gradR{}(f) \leq \rk_d(e_{n+1,d})$. Let $\XX' = \XX \cap \{ y_0 \neq 0\}$, so that $I_{\XX'} = I_{\XX}:(y_0)$. Now, we employ the same strategy as in the proof of Theorem \ref{thm: gradient monomials}. Using \eqref{eq: symmetric poly}, we have
\begin{align}
	|\bbX'| & = \sum_{i\geq 0} \HF(A_{I_{\bbX'}+(y_0)};i) \geq  \sum_{i\geq 0} \HF(A_{\Ann_{d-1}(\nabla e_{n+1,d}): (y_0) +(y_0)};i) = \nonumber \\
	& =  \sum_{i\geq 0} \HF(A_{\Ann_{d-1}(e_{n,d-1}) + (\widetilde{\phi})};i).
	\label{eq: symmetric poly 2}
\end{align}
From the proof of \cite[Theorem 3.4]{Lee16}, for $d$ odd, we have 
\[
	\sum_{i\geq 0} \HF(A_{\Ann_{d-1}(e_{n,d-1})};i) = \sum_{i=0}^{\frac{d-1}{2}} \binom{n+1}{i} =  \rmR_d(e_{n+1,d}).
\] 
Now, $\widetilde{\phi}$ is a square-free monomial of degree $d-1$ not divisible by $y_0$, therefore $\tilde{\phi} \notin \Ann_{d-1}(e_{n,d-1})$. By \eqref{eq: symmetric poly 2}, we obtain
$$
|\XX'| \geq \rmR_d(e_{n+1,d})-1.
$$ 
Applying the same argument as in the last part of the proof of Theorem \ref{thm: gradient monomials}, we conclude that $|\XX| \geq \rmR_d(e_{n+1,d})$, which concludes the proof for $d$ odd. 

By the proof of \cite[Corollary 4.4]{Lee16}, for $d$ even, we have 
\[
\sum_{i\geq 0} \HF(A_{\Ann_{d-1}(e_{n,d-1})};i) = \sum_{i=0}^{\frac{d}{2}} \binom{n+1}{i}-\binom{n}{\frac{d}{2}}. 
\]
Again, since $\widetilde{\phi} \notin \Ann_{d-1}(e_{n,d-1})$, we obtain
\[
|\XX| \geq |\XX'|\geq \sum_{i=0}^{\frac{d}{2}} \binom{n}{i}-\binom{n-1}{\frac{d}{2}}-1, 
\]
which concludes the proof for $d$ even.
\end{proof}
\end{theorem}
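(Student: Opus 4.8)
The plan is to collapse the three quantities to a single lower bound on the first gradient rank. By the chain \eqref{eq: intro} applied to $\ud = (1,d-1)$ (so $k=1$), we have $\rmR_d(e_{n+1,d}) \geq \rmR_{1,d-1}(e_{n+1,d}) \geq \gradR{}(e_{n+1,d})$, so it suffices to produce the matching lower bound $\gradR{}(e_{n+1,d}) \geq \rmR_d(e_{n+1,d})$ in the odd case (and the stated numerical bound in the even case); Lee's Theorem \ref{thm: waring symmetric poly} then supplies the actual value of $\rmR_d(e_{n+1,d})$. Thus the whole problem reduces to estimating $\gradR{}$ from below.

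First I would describe the simultaneous apolar ideal of the gradient. By Proposition \ref{prop: apolar gradient}, $\Ann_{d-1}(\nabla e_{n+1,d}) = \Ann_d(e_{n+1,d}) + (S^d V^*)$. The useful simplification is that $A_{\Ann_d(e_{n+1,d})}$ is Artinian Gorenstein of socle degree $d$, so $(\Ann_d(e_{n+1,d}))_d$ has codimension one in $S^d V^*$; since any square-free monomial $\phi = \bfy^\beta$ satisfies $\phi \circ e_{n+1,d} \neq 0$, such a single $\phi$ already fills this codimension, giving the clean description $\Ann_{d-1}(\nabla e_{n+1,d}) = \Ann_d(e_{n+1,d}) + (\phi)$.

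Next I would mimic the hyperplane-section argument from the proof of Theorem \ref{thm: gradient monomials}. Writing $\phi = y_0 \tilde{\phi}$, a direct colon computation gives $\Ann_{d-1}(\nabla e_{n+1,d}):(y_0) = \Ann_d(e_{n+1,d}):(y_0) + (\tilde{\phi})$, and the structural identity $\Ann_d(e_{n+1,d}):(y_0) = \Ann_{d-1}(e_{n,d-1})$ holds for a one-line reason: $y_0 \circ e_{n+1,d} = e_{n,d-1}$ (the elementary symmetric polynomial in $x_1,\dots,x_n$) and differential operators commute, and this ideal already contains $y_0$. For a minimal apolar set $\XX$ of $\nabla e_{n+1,d}$, setting $\XX' = \XX \cap \{y_0 \neq 0\}$ so that $I_{\XX'} = I_\XX:(y_0)$, Lemma \ref{lemma: basics HF} for the hyperplane $\{y_0=0\}$ together with the apolarity inclusion $I_\XX \subseteq \Ann_{d-1}(\nabla e_{n+1,d})$ yields
\[
|\XX'| \;\geq\; \sum_{i\geq 0}\HF(A_{\Ann_{d-1}(e_{n,d-1}) + (\tilde{\phi})};i).
\]
Since $\tilde{\phi}$ is a nonzero square-free monomial of degree $d-1$, it spans the one-dimensional socle of $A_{\Ann_{d-1}(e_{n,d-1})}$, so adding it lowers the total dimension by exactly one; combined with Lee's own Hilbert-sum computations from \cite{Lee16}, this gives $|\XX'| \geq \rmR_d(e_{n+1,d}) - 1$ in the odd case and the analogous estimate in the even case.

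The delicate endgame, which I expect to be the main obstacle, is to upgrade this off-by-one estimate to $|\XX| \geq \rmR_d(e_{n+1,d})$, exactly as in the final part of the proof of Theorem \ref{thm: gradient monomials}. Since $|\XX| \geq |\XX'|$, the only problematic case is $\XX' = \XX$ with $|\XX| = \rmR_d(e_{n+1,d}) - 1$; there, forcing termwise equality of the two Hilbert functions in Lemma \ref{lemma: basics HF} pins $\reg(\XX) \leq d-2$, so by Remark \ref{rmk: regularity} the ideal $I_\XX$ is generated in degree at most $d-1$. By Proposition \ref{prop: apolar gradient}, $\Ann_{d-1}(\nabla e_{n+1,d})$ and $\Ann_d(e_{n+1,d})$ agree up to degree $d-1$, hence $I_\XX \subseteq \Ann_d(e_{n+1,d})$, contradicting via the Apolarity Lemma that $\rmR_d(e_{n+1,d})$ is the Waring rank. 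This settles the odd case; the even case is identical, substituting Lee's even Hilbert-sum value to obtain the weaker stated bound.
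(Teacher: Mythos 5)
Your proposal is correct and follows essentially the same route as the paper's proof: the reduction via \eqref{eq: intro} and Lee's theorem, the description $\Ann_{d-1}(\nabla e_{n+1,d}) = \Ann_d(e_{n+1,d}) + (\phi)$ from Proposition \ref{prop: apolar gradient}, the colon-ideal identity with $\Ann_{d-1}(e_{n,d-1})$, the hyperplane-section bound via Lemma \ref{lemma: basics HF} applied to $\XX' = \XX \cap \{y_0 \neq 0\}$, and the regularity endgame borrowed from Theorem \ref{thm: gradient monomials}. Your socle observation (that $\tilde{\phi}$ spans the one-dimensional top-degree component, so adding it drops the Hilbert-sum by exactly one) is a slightly more explicit justification of the paper's ``$\tilde{\phi} \notin \Ann_{d-1}(e_{n,d-1})$'' step, but it is the same argument.
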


\subsection*{Acknowledgements} This project was realized during the Research in Pairs program at CIRM Trento in July 2018. We thank CIRM for financial support and for providing a stimulating environment for the development of the project. We also thank E. Ballico and A. Bernardi for helpful discussions and suggestions during our stay in Trento. Finally, we thank the three anonymous referees for their useful comments.

F.G. acknowledges financial support from the European Research Council (ERC Grant Agreement no. 337603) and VILLUM FONDEN via the QMATH Centre of Excellence (Grant no. 10059). A.O. acknowledges financial support from the Spanish Ministry of Economy and Competitiveness, through the Mar\'ia de Maeztu Programme for Units of Excellence in R$\&$D (MDM-2014-0445). E.V. acknowledges financial support by the grant 346300 for IMPAN from the Simons Foundation and the matching 2015-2019 Polish MNiSW~fund. 

\bibliographystyle{amsalpha}
\begin{small}
	\bibliography{bibGOV}

\newcommand{\etalchar}[1]{$^{#1}$}
\providecommand{\bysame}{\leavevmode\hbox to3em{\hrulefill}\thinspace}
\providecommand{\MR}{\relax\ifhmode\unskip\space\fi MR }
\providecommand{\MRhref}[2]{%
  \href{http://www.ams.org/mathscinet-getitem?mr=#1}{#2}
}
\providecommand{\href}[2]{#2}
\begin{thebibliography}{AGMO18}

\bibitem[AGMO18]{AngGalMelOtt:WaringDecompPolyVector}
E.~Angelini, F.~Galuppi, M.~Mella, and G.~Ottaviani, \emph{{On the number of
  {W}aring decompositions for a generic polynomial vector}}, J. Pure Appl. Alg.
  \textbf{222} (2018), no.~4, 950--965.

\bibitem[AH95]{AlHi95}
J.~Alexander and A.~Hirschowitz, \emph{{Polynomial interpolation in several
  variables}}, J. Algebraic Geometry \textbf{4} (1995), no.~4, 201--222.

\bibitem[AR03]{AllRho:PhylogeneticInvariantsGeneralMarkovModelSequenceMutation}
E.~S. Allman and J.~A. Rhodes, \emph{{Phylogenetic invariants for the general
  {M}arkov model of sequence mutation}}, Math. Biosci. \textbf{186} (2003),
  no.~2, 113--144.

\bibitem[BB13]{BalBer:TensorRankTangentDevelopSegre}
E.~Ballico and A.~Bernardi, \emph{{Tensor ranks on tangent developable of
  {S}egre varieties}}, Lin. and Mult. Algebra \textbf{61} (2013), no.~7,
  881--894.

\bibitem[BB14]{BuczBucz:SecantVarsHighDegVeroneseReembeddingsCataMatAndGorSchemes}
W.~Buczy{\'n}ska and J.~Buczy{\'n}ski, \emph{{Secant varieties to high degree
  {V}eronese reembeddings, catalecticant matrices and smoothable {G}orenstein
  schemes}}, J. Algebraic Geometry \textbf{23} (2014), no.~1, 63--90.

\bibitem[BB15]{BuBu15:OnDifference}
\bysame, \emph{{On differences between the border rank and the smoothable rank
  of a polynomial}}, Glasgow Mathematical Journal \textbf{57} (2015), no.~2,
  401--413.

\bibitem[BBCG19]{BalBerChrGes:PartiallySymRkW}
E.~Ballico, A.~Bernardi, M.~Christandl, and F.~Gesmundo, \emph{{On the
  partially symmetric rank of tensor products of W-states and other symmetric
  tensors}}, Atti Accad. Naz. Lincei Rend. Lincei Mat. Appl. \textbf{30}
  (2019), 93--124.

\bibitem[BBCM13]{BerBraComMou:GenTensorDecompMomMatApplications}
A.~Bernardi, J.~Brachat, P.~Comon, and B.~Mourrain, \emph{{General tensor
  decomposition, moment matrices and applications}}, J. Symb. Comput.
  \textbf{52} (2013), 51--71.

\bibitem[BBT13]{BuBuTe13:WaringMonomials}
W.~Buczy{\'n}ska, J.~Buczy{\'n}ski, and Z.~Teitler, \emph{{Waring
  decompositions of monomials}}, J. Algebra \textbf{378} (2013), 45--57.

\bibitem[BCMT10]{BrCoMoTs10:SymmTensor}
J.~Brachat, P.~Comon, B.~Mourrain, and E.~Tsigaridas, \emph{{Symmetric tensor
  decomposition}}, Lin. Alg. Appl. \textbf{433} (2010), no.~11-12, 1851--1872.

\bibitem[BGL13]{BGL:DeterminantalEquationsEKSConj}
J.~Buczy{\'n}ski, A.~Ginensky, and J.~M. Landsberg, \emph{{Determinantal
  equations for secant varieties and the Eisenbud--Koh--Stillman conjecture}},
  J. London Math. Soc. \textbf{88} (2013), no.~1, 1--24.

\bibitem[BJMR18]{BJMR:PolynomialsGivenHF}
A.~Bernardi, J.~Jelisiejew, P.~M. Marques, and K.~Ranestad, \emph{{On
  polynomials with given Hilbert function and applications}}, Collectanea
  Mathematica \textbf{69} (2018), no.~1, 39--64.

\bibitem[BL13]{BL:RanksTensorsAndGeneralization}
J.~Buczy{\'n}ski and J.~M. Landsberg, \emph{{Ranks of tensors and a
  generalization of secant varieties}}, Lin. Alg. Appl. \textbf{438} (2013),
  no.~2, 668--689.

\bibitem[BR13]{BerRan:CactusRankCubicForm}
A.~Bernardi and K.~Ranestad, \emph{{On the cactus rank of cubic forms}}, J.
  Symb. Comput. \textbf{50} (2013), 291--297.

\bibitem[CCC{\etalchar{+}}18]{CCCGW15}
E.~Carlini, M.~V. Catalisano, L.~Chiantini, A.~V. Geramita, and Y.~Woo,
  \emph{{Symmetric tensors: rank, {S}trassen's conjecture and
  e-computability}}, Ann. Sc. Norm. Super. Pisa Cl. Sci. \textbf{XVIII} (2018),
  no.~1, 363--390.

\bibitem[CCG12]{CarCatGer:SolutionMonomials}
E.~Carlini, M.~V. Catalisano, and A.~V. Geramita, \emph{{The solution to the
  Waring problem for monomials and the sum of coprime monomials}}, J. Algebra
  \textbf{370} (2012), 5--14.

\bibitem[CCO17]{CCO17:WaringLoci}
E.~Carlini, M.~V. Catalisano, and A.~Oneto, \emph{{Waring loci and the Strassen
  conjecture}}, Advances in Mathematics \textbf{314} (2017), 630--662.

\bibitem[CGLM08]{ComGolLimMou:SymmetricTensorsSymmetricTensorRank}
P.~Comon, G.~Golub, L.-H. Lim, and B.~Mourrain, \emph{{Symmetric tensors and
  symmetric tensor rank}}, SIAM J. Matrix An. and Appl. \textbf{30} (2008),
  no.~3, 1254--1279.

\bibitem[Cle61]{Cle61:UeberCurven}
A.~Clebsch, \emph{{Uber Curven fierter Ordnung}}, J. Reine Angew. Math.
  \textbf{59} (1861), 125--145.

\bibitem[Com94]{Comon:IndependentCompAnalysis}
P.~Comon, \emph{{Independent component analysis, a new concept?}}, Signal
  processing \textbf{36} (1994), no.~3, 287--314.

\bibitem[CS11]{ComSei11:RankBinary}
G.~Comas and M.~Seiguer, \emph{{On the rank of a binary form}}, Found. of Comp.
  Math. \textbf{11} (2011), no.~1, 65--78.

\bibitem[CV17]{CarVen:SimultaneousWaringRank}
E.~Carlini and E.~Ventura, \emph{{A note on the simultaneous Waring rank of
  monomials}}, Illinois J. Math. \textbf{61} (2017), no.~3-4, 517--530.

\bibitem[DK93]{DK93:PolarCovariants}
I.~Dolgachev and V.~Kanev, \emph{{Polar covariants of plane cubics and
  quartics}}, Adv. Math. \textbf{98} (1993), no.~2, 216--301.

\bibitem[EH00]{EisHar:GeometrySchemes}
D.~Eisenbud and J.~Harris, \emph{{The geometry of schemes}}, {Graduate Texts in
  Mathematics}, vol. 197, Springer-Verlag, New York, 2000.

\bibitem[Eis95]{Eis:CommutativeAlgebra}
D.~Eisenbud, \emph{{Commutative {A}lgebra: with a view toward algebraic
  geometry}}, {Graduate Texts in Mathematics}, vol. 150, Springer-Verlag, New
  York, 1995.

\bibitem[ER93]{ER93:Apolarity}
R.~Ehrenborg and G.-C. Rota, \emph{{Apolarity and canonical forms for
  homogeneous polynomials}}, European Journal of Combinatorics \textbf{14}
  (1993), no.~3, 157--181.

\bibitem[Fon02]{Fon:WaringProblemManyForms}
C.~Fontanari, \emph{{On {W}aring's problem for many forms and {G}rassmann
  defective varieties}}, J. Pure Appl. Alg. \textbf{174} (2002), no.~3,
  243--247.

\bibitem[Fri13]{Fri13:TensorsBorderRank}
S.~Friedland, \emph{{On tensors of border rank $l$ in $\mathbb{C}^{m \times n
  \times l}$}}, Lin. Alg. Appl. \textbf{438} (2013), no.~2, 713--737.

\bibitem[Fri16]{Fri:RmksSymmetricRank}
\bysame, \emph{{Remarks on the symmetric rank of symmetric tensors}}, SIAM J.
  Matrix An. and Appl. \textbf{37} (2016), no.~1, 320--337.

\bibitem[Ga{\l}16]{Gal16:MultiApolarity}
M.~Ga{\l}{\k{a}}zka, \emph{{Multigraded apolarity}}, arXiv:1601.06211 (2016).

\bibitem[Ga{\l}17]{Galazka:VectorBundlesGiveEqnsForCactus}
\bysame, \emph{{Vector bundles give equations of cactus varieties}}, Lin. Alg.
  Appl. \textbf{521} (2017), 254--262.

\bibitem[Ger96]{Gera:InvSysFatPts}
A.~V. Geramita, \emph{{Inverse systems of fat points: {W}aring's problem,
  secant varieties of {V}eronese varieties and parameter spaces for
  {G}orenstein ideals}}, {The curves seminar at {Q}ueen's}, vol.~10, 1996,
  pp.~2--114.

\bibitem[GRV18]{GalRanVil:ApolarityToric}
M.~Gallet, K.~Ranestad, and N.~Villamizar, \emph{{Varieties of apolar
  subschemes of toric surfaces}}, Arkiv f{\"o}r Matematik \textbf{56} (2018),
  no.~1, 73--99.

\bibitem[GSS05]{GarStiStu:AlgebraicGeomBayNetworks}
L.~D. Garcia, M.~E. Stillman, and B.~Sturmfels, \emph{{Algebraic geometry of
  {B}ayesian networks}}, J. Symb. Comput. \textbf{39} (2005), no.~3-4,
  331--355.

\bibitem[Hac12]{Hackbusch:TensorBook}
W.~Hackbusch, \emph{{Tensor spaces and numerical tensor calculus}}, {Springer
  Series in Computational Mathematics}, vol.~42, Springer Science \& Business
  Media, 2012.

\bibitem[Har77]{Hart}
R.~Hartshorne, \emph{{Algebraic geometry}}, vol.~52, Springer-Verlag, New York,
  1977, Graduate Texts in Mathematics.

\bibitem[Iar95]{Ia95:InverseSystem}
A.~Iarrobino, \emph{{Inverse system of a symbolic power {II}. The {W}aring
  problem for forms}}, J. Algebra \textbf{174} (1995), no.~3, 1091--1110.

\bibitem[IK99]{IarrKan:PowerSumsBook}
A.~Iarrobino and V.~Kanev, \emph{{Power sums, {G}orenstein algebras, and
  determinantal loci}}, {Lecture Notes in Mathematics}, vol. 1721,
  Springer-Verlag, Berlin, 1999.

\bibitem[Kle99]{Kl99:ReprPoly}
J.~Kleppe, \emph{{Representing a homogenous polynomial as a sum of powers of
  linear forms}}, Ph.D. thesis, University of Oslo, 1999.

\bibitem[Lan12]{Lan:TensorBook}
J.~M. Landsberg, \emph{{Tensors: {G}eometry and {A}pplications}}, {Graduate
  Studies in Mathematics}, vol. 128, American Mathematical Society, Providence,
  RI, 2012.

\bibitem[Las04]{Las04:ZurTheorie}
E.~Lasker, \emph{{Zur Theorie der kanonischen Formen}}, Mathematische Annalen
  \textbf{58} (1904), no.~3, 434--440.

\bibitem[Lee16]{Lee16}
H.~Lee, \emph{{Power sum decompositions of elementary symmetric polynomials}},
  Lin. Alg. Appl. \textbf{492} (2016), 89--97.

\bibitem[Lin00]{Lin:TensorProductsANOVAModels}
Y.~Lin, \emph{{Tensor product space ANOVA models}}, Ann. Stat. \textbf{28}
  (2000), no.~3, 734--755.

\bibitem[LO13]{LanOtt:EqnsSecantVarsVeroneseandOthers}
J.~M. Landsberg and G.~Ottaviani, \emph{{Equations for secant varieties of
  {V}eronese and other varieties}}, Ann. Mat. Pura Appl. (4) \textbf{192}
  (2013), no.~4, 569--606.

\bibitem[LT10]{LaTe10:RanksBorderRanks}
J.~M. Landsberg and Z.~Teitler, \emph{{On the ranks and border ranks of
  symmetric tensors}}, Found. of Comp. Math. \textbf{10} (2010), no.~3,
  339--366.

\bibitem[Oed08]{Oed:Report}
L.~Oeding, \emph{{Report on ``{G}eometry and representation theory of tensors
  for computer science, statistics and other areas''}}, arXiv:0810.3940 (2008).

\bibitem[Pal03a]{Pal03:CubicaQuinaria}
F.~Palatini, \emph{{Sulla rappresentazione delle forme ed in particolare della
  cubica quinaria con la somme di potenze di forme lineari}}, Atti R. Accad.
  Sc. Torino \textbf{38} (1903), 43--50.

\bibitem[Pal03b]{Pal03:FormeTernarie}
\bysame, \emph{{Sulla rappresentazione delle forme ternarie mediante la somma
  di potenze di forme lineari}}, Rend. Accad. Lincei V \textbf{12} (1903),
  378--384.

\bibitem[Rez92]{Re92:SumRealForms}
B.~Reznick, \emph{{Sum of even powers of real linear forms}}, vol. 463,
  American Mathematical Soc., 1992.

\bibitem[RS11]{RanSch11:RankForm}
K.~Ranestad and F.-O. Schreyer, \emph{{On the rank of a symmetric form}}, J.
  Algebra \textbf{346} (2011), no.~1, 340--342.

\bibitem[Sei19]{Seig:RanksSymRanksCubSurf}
A.~Seigal, \emph{{Ranks and symmetric ranks of cubic surfaces}}, J. Symb. Comp.
  (2019).

\bibitem[Shi18]{Shitov:CounterexampleComon}
Y.~Shitov, \emph{{A counterexample to Comon's conjecture}}, SIAM J. Appl. Alg.
  Geom. \textbf{2} (2018), no.~3, 428--443.

\bibitem[SV06]{SiVT06:MultigradedRegularity}
J.~Sidman and A.~{Van Tuyl}, \emph{{Multigraded regularity: syzygies and fat
  points}}, Contributions to Algebra and Geometry \textbf{47} (2006), no.~1,
  1--22.

\bibitem[Syl52]{Sylvester1852}
J.~J. Sylvester, \emph{{On the principles of the calculus of forms}}, Cambridge
  and Dublin Math. J. (1852), 52--97.

\bibitem[Tei14]{Teit:GeometricLowerBoundsGeneralizedRanks}
Z.~Teitler, \emph{{{G}eometric lower bounds for generalized ranks}},
  arXiv:1406.5145 (2014).

\bibitem[Ter15]{Ter:CoppieFormeTernarie}
A.~Terracini, \emph{{Sulla rappresentazione delle coppie di forme ternarie
  mediante somme di potenze di forme lineari}}, Ann. Mat. Pura ed Appl.
  (1898-1922) \textbf{24} (1915), no.~1, 1--10.

\bibitem[Ter16]{Ter15:RappresFormeQuaternarie}
\bysame, \emph{{Sulla rappresentazione delle forme quaternarie mediante somme
  di potenze di forme lineari}}, Atti Reale Acc. Sc. Torino \textbf{51} (1916),
  107--117.

\bibitem[Ven18]{Ven18}
E.~Ventura, \emph{{Real rank boundaries and loci of forms}}, Lin. and Mult.
  Algebra (2018), 1--15.

\bibitem[ZHQ16]{ZhangHuangQi:ComonsConjecture}
X.~Zhang, Z.-H. Huang, and L.~Qi, \emph{{Comon's Conjecture, rank
  decomposition, and symmetric rank decomposition of symmetric tensors}}, SIAM
  J. Matrix An. and Appl. \textbf{37} (2016), no.~4, 1719--1728.

\end{thebibliography}
\end{small}

\end{document}